\numberwithin{equation}{section}
\numberwithin{table}{section}
\numberwithin{figure}{section}
\newtheorem{theorem}{Theorem}[section]
\newtheorem{lemma}[theorem]{Lemma}
\newtheorem{corollary}[theorem]{Corollary}
\newtheorem{definition}[theorem]{Definition}
\theoremstyle{definition}
\newtheorem{example}[theorem]{Example}
\newtheorem{remark}[theorem]{Remark}
\def\revddots{\mathinner{\mkern1mu\raise\p@
     \vbox{\kern7\p@\hbox{.}}\mkern2mu
     \raise4\p@\hbox{.}\mkern2mu\raise7\p@\hbox{.}\mkern1mu}}
\newcommand {\mat}  [1] {\left[\begin{array}{#1}}
\newcommand {\rix}      {\end{array}\right]}
\newcommand{\la}{\ensuremath{\lambda}}
\def\max{\mathop{\rm max}}
\def\rank{\mathop{\rm rank}}
\def\min{\mathop{\rm min}}
\def\det{\mathop{\rm det}}
\mathchardef\Gamma="7100
\mathchardef\Delta="7101
\mathchardef\Theta="7102
\mathchardef\Lambda="7103
\mathchardef\Xi="7104
\mathchardef\Pi="7105
\mathchardef\Sigma="7106
\mathchardef\Upsilon="7107
\mathchardef\Phi="7108
\mathchardef\Psi="7109
\mathchardef\Omega="710A
\newcommand{\bF}{\mathbb{F}}
\newcommand{\FF}{\mathbb{F}}
\newcommand{\RR}{\mathbb{R}}
\newcommand{\CC}{\mathbb{C}}
\newcommand{\beq}{\begin{equation}}
\newcommand{\eeq}{\end{equation}}
\active\gdef@{\mkern1mu}}
\def\veps{\varepsilon}
\begin{document}

\title{Robustness and perturbations of minimal bases II: \\ The case with given row degrees}
\author{Froil\'an M. Dopico\footnote{Departamento de Matem\'aticas, Universidad Carlos III de Madrid, Avenida de la Universidad 30, 28911, Legan\'es, Spain.  Email: {\tt dopico@math.uc3m.es}. Supported by ``Ministerio de Econom\'{i}a, Industria y Competitividad of Spain'' and ``Fondo Europeo de Desarrollo Regional (FEDER) of EU'' through grants MTM-2015-68805-REDT, MTM-2015-65798-P (MINECO/FEDER, UE).}
\hspace{0.25cm} and \hspace{0.25cm} 
Paul Van Dooren\footnote{Department of Mathematical Engineering, Universit\'e catholique de Louvain, Avenue Georges Lema\^{i}tre 4, B-1348 Louvain-la-Neuve, Belgium.  Email: {\tt paul.vandooren@uclouvain.be}. Supported by the Belgian network DYSCO (Dynamical Systems, Control, and Optimization), funded by the Interuniversity Attraction Poles Programme initiated by the Belgian Science Policy Office.} 
}
\date{\today}
\maketitle

\begin{abstract}
This paper studies generic and perturbation properties inside the linear space of $m\times (m+n)$ polynomial matrices whose rows have degrees bounded by a given list $d_1, \ldots, d_m$ of natural numbers, which in the particular case $d_1 = \cdots = d_m = d$ is just the set of $m\times (m+n)$ polynomial matrices with degree at most $d$. Thus, the results in this paper extend to a much more general setting the results recently obtained in [Van Dooren \& Dopico, Linear Algebra Appl. (2017), {\tt http://dx.doi.org/10.1016/j.laa.2017.05.011}] only for polynomial matrices with degree at most $d$. Surprisingly, most of the properties proved in [Van Dooren \& Dopico, Linear Algebra Appl. (2017)], as well as their proofs, remain to a large extent unchanged in this general setting of row degrees bounded by a list that can be arbitrarily inhomogeneous provided the well-known Sylvester matrices of polynomial matrices are replaced by the new trimmed Sylvester matrices introduced in this paper. The following results are presented, among many others, in this work: (1) generically the polynomial matrices in the considered set are minimal bases with their row degrees exactly equal to $d_1, \ldots , d_m$, and with right minimal indices differing at most by one and having a sum equal to $\sum_{i=1}^{m} d_i$, and (2), under perturbations, these generic minimal bases are robust and their dual minimal bases can be chosen to vary smoothly.
\end{abstract}

{\small \noindent
{\bf Key words.} polynomial matrices, perturbation theory, minimal indices, dual minimal bases, robustness, genericity  \\
{\bf AMS subject classification.} 15A54, 15A60, 15B05, 65F15, 65F35, 93B18 }


\section{Introduction} \label{sect.intro}
Minimal bases of rational vector spaces, usually arranged as the rows of polynomial matrices, are a standard tool in control theory and in coding theory. Therefore, their definition, properties, and many of their practical applications can be found in classical references on these subjects, as, for instance, the ones by Wolovich \cite{wolovich}, Kailath \cite{Kai80}, and Forney \cite{For75}, although the concept of minimal bases is much older and, as far as we know, it was introduced for the first time in the famous paper by Dedekind and Weber \cite{dedekind}. Recently, minimal bases, and the closely related notion of pairs of dual minimal bases, have been applied to some problems that have attracted considerable attention in the last years as, for instance, in the solution of inverse complete eigenstructure problems for polynomial matrices \cite{DDMV,DDV}, in the development of new classes of linearizations and $\ell$-ifications of polynomial matrices \cite{DDV-l-ifications,blockKron,lawrence-perez-cheby,robol},
 in the explicit construction of linearizations of rational matrices \cite{amparanjointrat}, and in the backward error analysis of complete polynomial eigenvalue problems solved via different classes of linearizations \cite{blockKron,lawrence-vanbarel-vandooren}.

Some of the applications mentioned in the previous paragraph motivated the development in the recent paper \cite{VD} of robustness and perturbation results of minimal bases, which had not been explored before in the literature. The study of any perturbation problem for polynomial matrices requires as a first step to fix the set of allowable perturbations and, with this purpose, the reference \cite{VD} considers perturbations whose only constraint is that they do not increase the degree $d$ of the $m \times (m+n)$ given minimal basis that is perturbed. These perturbations are certainly natural, very mild, and, moreover, cover the main applications that are mentioned in \cite{VD}, that is,  backward error analyses of algorithms for solving polynomial eigenvalue problems with linearizations. The perturbation theory in \cite{VD} is based on a number of new results that were also obtained in \cite{VD} as, for instance, a new characterization of minimal bases in terms of their Sylvester matrices. Moreover, \cite{VD} establishes that the polynomial matrices of size $m \times (m+n)$ and degree at most $d$ are generically minimal bases with the degrees of all their rows equal to $d$ and with their right minimal indices satisfying the following two key properties: they are ``almost homogeneous'', i.e., they differ at most by one, and their sum is equal to $d m$. The perturbation results in \cite{VD} are only valid for these generic minimal bases, which are ``highly homogeneous'' from the perspectives mentioned above.

In order to describe sets of polynomial matrices with bounded rank and degree in a more explicit way than the one presented in \cite{dmy-dop-2017}, one needs to consider perturbations of a minimal basis $M(\la)$ with much stronger constraints than the one imposed in \cite{VD}, since such perturbations cannot increase the individual degree of each of the rows of $M(\la)$. More precisely, given an $m \times (m+n)$ minimal basis $M(\la)$ whose rows have degrees $d_1, d_2, \ldots , d_m$, which can be arbitrarily different each other, or, in other words, ``arbitrarily inhomogeneous'', the perturbed polynomial matrix $M (\la) + \Delta M(\la)$ must have rows with degrees at most $d_1, d_2, \ldots , d_m$. These perturbations must stay in the set of $m \times (m+n)$ polynomial matrices whose rows have degrees at most $d_1, d_2, \ldots , d_m$ and, therefore, this is the set studied in this paper. It is clear that the polynomial matrices in this set have generically rows with degrees exactly equal to $d_1, d_2, \ldots , d_m$ and, so, are very different from the generic polynomial matrices arising in \cite{VD}, which have the degrees of their rows all equal to $d$, that is, completely homogeneous. Despite this important difference, the results presented in this work are to a large extent similar to those in \cite{VD}, which at a first glance is rather surprising. Thus, we prove in this paper that the polynomial matrices of size $m \times (m+n)$ and with the degrees of their rows bounded by $d_1, d_2, \ldots , d_m$ are generically minimal bases with the degrees of their rows exactly equal to $d_1, d_2, \ldots , d_m$ and having ``almost homogeneous'' right minimal indices with sum equal to $\sum_{i=1}^{m} d_i$. We remark that this ``almost homogeneity'' of the right minimal indices, or, equivalently, of the degrees of the dual minimal bases, is the key property that allows us to develop a perturbation theory analogous to the one presented in \cite{VD}. In order to prove these new results, we need to introduce a new tool that, although simple, we think it has not been used before in the literature: the trimmed Sylvester matrices associated with a polynomial matrix. Once this new tool and its properties are derived, most of the proofs in this paper are rather similar to those in \cite{VD} and only the relevant differences will be discussed.

Among all the results presented in this paper, perhaps the most remarkable one is the genericity of the property of ``almost homogeneous'' minimal indices in a set of polynomial matrices whose elements have generically rows with inhomogeneous degrees. We think that this is the first time that this phenomenon has been observed, since, until now, the genericity of ``almost homogeneous'' minimal indices is a well-known fact that has been proved only in scenarios where the generic situation is that the degrees of the rows are all equal. Thus, in the case of pencils, i.e., polynomial matrices with degree at most one, this property was observed for the first time in \cite{vandoorenphd} and, then, in many other references from different perspectives as, for instance, in \cite{Bole98,DeEd95,EdEK97,EdEK99} for general pencils and in \cite{DeDo07,DeDo08} for pencils satisfying certain properties. In the case of polynomial matrices with degree at most $d$, where $d$ is an arbitrary positive integer, results on this generic property are much more recent and can be extracted from the general stratification results in \cite{DJKV15,JoKV13} and are explicitly stated in \cite{dmy-dop-2017,VD}.

As said before, the results and proofs in this paper are closely connected to those in \cite{VD} and, so, we will refer as much as possible to that paper for the proofs that can be found there. Nevertheless, for the sake of readability of the current paper, we will repeat here some definitions and crucial theorems that are needed to understand the new results. This said, the paper is organized as follows. In Section \ref{Sec:Preliminaries} we recall some basic definitions and properties of minimal bases and introduce some concepts and notations to be used throughout the paper. In Section \ref{Sec:Trimmed} we introduce trimmed Sylvester matrices and show that have similar properties as the classical Sylvester matrices. Section \ref{Sec:minbases-from-Trimmed} establishes general properties of the minimal bases that belong to the set of $m\times (m + n)$ polynomial matrices whose rows have degrees bounded by a list of numbers.
In Section \ref{Sec:FullRank} we introduce full-trimmed-Sylvester-rank matrices and show that they correspond to minimal bases with almost homogeneous right minimal indices. In Sections
\ref{Sec:Genericity} and \ref{Sec:Smoothness} we then show that these full-trimmed-Sylvester-rank matrices also correspond to the generic situation and that these minimal bases therefore also have good robustness properties. The perturbations of their dual minimal bases are then analyzed in Section \ref{Sec:Dualminbases}. Finally, we revisit the classical conditions for minimal bases and show in Section \ref{Sec:Classrevisited} that the robustness of these conditions follows from the robustness properties of Section \ref{Sec:Smoothness}.
In the concluding Section \ref{Sec:Conclusions} we summarize the main results presented in the paper and discuss some of their possible applications. Except for Section \ref{Sec:Trimmed}, the remaining sections of this paper are counterparts of sections in \cite{VD} and the specific relationships will be commented in each section. However, \cite[Section 9]{VD} has no counterpart here since it is based on a property that is not preserved for minimal bases with inhomogeneous row degrees: their reversal polynomial matrices are no longer minimal bases.

\section{General preliminaries and the space $\FF[\la]^{m \times (m+n)}_{\underline{d}}$}
\label{Sec:Preliminaries}
This section introduces notations, nomenclature, and basic concepts used in the rest of the paper. The first part of the section is a summary of \cite[Section 2]{VD} and is included for convenience of the reader, who can find more complete information in \cite{VD}. The second part introduces the vector space $\FF[\la]^{m \times (m+n)}_{\underline{d}}$ of polynomial matrices whose rows have degrees bounded by the elements of a given list $\underline{d}$, which is the space containing the polynomials studied in this paper. In addition, some basic properties of $\FF[\la]^{m \times (m+n)}_{\underline{d}}$ are established.

The results in Sections \ref{Sec:Trimmed}, \ref{Sec:minbases-from-Trimmed}, and \ref{Sec:FullRank} of this paper hold in any field $\FF$, while in the remaining sections $\FF$ is the field of real numbers $\RR$ or of complex numbers $\CC$.
We adopt standard notation: $\FF[\la]$ denotes the ring of polynomials in the variable $\la$ with coefficients in $\FF$ and $\FF(\la)$ denotes the field of fractions of $\FF[\la]$. Vectors with entries in $\FF[\la]$ are called polynomial vectors. In addition, $\FF[\la]^{m\times n}$ stands for the set of $m\times n$ polynomial matrices, and $\FF(\la)^{m\times n}$ for the set of $m\times n$ rational matrices. The {\em degree} of a polynomial vector, $v(\la)$, or matrix, $P(\la)$, is the highest degree of all its entries and is denoted by $\deg(v)$ or $\deg(P)$. Finally, $\overline \FF$ denotes the algebraic closure of $\FF$, $I_n$ the $n\times n$ identity matrix,
and $0_{m\times n}$ the $m\times n$ zero matrix, where the sizes are omitted when they are clear from the context.

The rank of $P(\la)$ (sometimes called ``normal rank'') is just the rank of $P(\la)$ considered as a matrix over the field $\FF (\la)$,
and is denoted by $\mbox{rank} (P)$. Other concepts on polynomial matrices used in this paper can be found in the classical books \cite{Gan59,Kai80}, as well as in the summary included in \cite[Sect. 2]{DDM}.

Since ``minimal basis'' is the key concept of this paper, we revise its definition and characterization. It is well known that every rational vector subspace $\mathcal{V}$,
i.e., every subspace $\mathcal{V} \subseteq \FF(\la)^n$ over the field $\FF(\la)$,
has bases consisting entirely of polynomial vectors. Among them some are minimal in the following sense introduced by Forney \cite{For75}: a {\em minimal basis} of $\mathcal{V}$ is a basis of $\mathcal{V}$ consisting of polynomial vectors whose sum of degrees is minimal among all bases of $\mathcal{V}$  consisting of polynomial vectors. The fundamental property \cite{For75,Kai80} of such bases is that the ordered list of degrees of the polynomial vectors in any minimal basis of $\mathcal{V}$ is always the same. Therefore, these degrees are an intrinsic property of the subspace $\mathcal{V}$ and are called the {\em minimal indices} of $\mathcal{V}$.
This discussion leads us to the definition of the minimal bases and indices of a polynomial matrix. An $m\times n$ polynomial matrix $P(\lambda)$ with rank $r$ smaller than $m$ and/or $n$ has non-trivial left and/or right rational null-spaces, respectively,
over the field $\FF (\la)$, which are denoted by ${\cal N}_{\ell}(P)$ and ${\cal N}_r(P)$, respectively.
Polynomial matrices with non-trivial ${\cal N}_{\ell}(P)$ and/or ${\cal N}_r(P)$
are called {\em singular} polynomial matrices. If the rational subspace ${\cal N}_{\ell}(P)$ is non-trivial, it has minimal bases and minimal indices, which are called the {\em left minimal bases and indices} of $P(\la)$. Analogously, the {\em right minimal bases and indices} of $P(\la)$ are those of ${\cal N}_{r}(P)$, whenever this subspace is non-trivial.

The definition of minimal basis given above cannot be easily handled in practice. Therefore, we include in Theorem \ref{minbasis_th} a classical characterization introduced in \cite[p. 495]{For75} that is more useful, although it requires to check the ranks of infinitely many constant matrices. We emphasize that, recently, a characterization in terms of the ranks of a finite number of constant matrices has been obtained in \cite[Theorem 3.7]{VD} and that this other characterization is refreshed later in Theorem \ref{th:finitenumbranks}. The statement of Theorem \ref{minbasis_th} requires to introduce Definition \ref{colred}.
For brevity, we use the expression ``column (resp., row) degrees'' of a polynomial matrix to mean the degrees of its column (resp., row) vectors.

\begin{definition}\label{colred}
Let $d'_1,\ldots,d'_n$ be the column degrees of $N(\lambda) \in \FF[\la]^{m\times n}$.
The highest-column-degree coefficient matrix of $N(\la)$, denoted by $N_{hc}$,
is the $m\times n$ constant matrix whose {\rm $j$th} column
is the vector coefficient of $\lambda^{d'_j}$ in the {\rm $j$th} column of $N(\lambda)$.
The polynomial matrix $N(\la)$ is said to be \emph{column reduced} if $N_{hc}$ has full column rank.

Similarly, let $d_1,\ldots,d_m$ be the row degrees of $M(\la) \in \FF[\la]^{m\times n}$. The highest-row-degree coefficient matrix of $M(\la)$, denoted by $M_{hr}$,
is the $m\times n$ constant matrix whose {\rm $j$th} row is the vector coefficient
of $\lambda^{d_j}$ in the {\rm $j$th} row of $M(\lambda)$.
The polynomial matrix $M(\la)$ is said to be \emph{row reduced} if $M_{hr}$ has full row rank.
\end{definition}

Theorem \ref{minbasis_th} provides the announced characterization of minimal bases proved in \cite{For75}.

\begin{theorem}\label{minbasis_th}
  The columns {\rm (}resp., rows{\rm )} of a polynomial matrix $N(\lambda)$ over a field $\FF$
  are a minimal basis of the subspace they span
  if and only if $N(\lambda_0)$ has full column {\rm (}resp., row{\rm )} rank
  for all $\lambda_0 \in \overline \FF$,
  and $N(\la)$ is column {\rm (}resp., row{\rm )} reduced.
\end{theorem}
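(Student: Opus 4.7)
The plan is to prove the column version (the row version follows by transposing) through two implications. For the only-if direction, I would argue by contrapositive, showing that the failure of either condition allows one to produce a polynomial basis of the same rational span with a strictly smaller total column-degree, thereby contradicting minimality.

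Suppose first that $N(\la)$ fails to be column reduced, so there is $\alpha\in\FF^k\setminus\{0\}$ with $N_{hc}\alpha=0$. Set $d^*=\max\{d'_j:\alpha_j\neq 0\}$, pick $j_0$ with $\alpha_{j_0}\neq 0$ and $d'_{j_0}=d^*$, and define the polynomial vector $u(\la)=(\alpha_1\la^{d^*-d'_1},\dots,\alpha_k\la^{d^*-d'_k})^\top$. Substituting $u$ for the $j_0$-th column of the identity yields a unimodular $U(\la)$ of determinant $\alpha_{j_0}$; the $j_0$-th column of $NU$ is $Nu$, whose $\la^{d^*}$-coefficient equals $N_{hc}\alpha=0$, so $\deg(Nu)<d'_{j_0}$ and the column-degree sum strictly drops. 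Next, suppose $N(\la_0)$ is rank deficient for some $\la_0\in\ov\FF$, and let $p(\la)\in\FF[\la]$ be the minimal polynomial of $\la_0$ over $\FF$ (with $p=\la-\la_0$ when $\la_0\in\FF$). Since $p$ divides every $k\times k$ minor of $N$, the matrix $N(\la)$ has a nonzero right null vector over the residue field $\FF[\la]/(p)$; lifting it back to $\beta\in\FF[\la]^k\setminus\{0\}$ with $\deg\beta_j<\deg p$ for all $j$ produces a nonzero $w\in\FF[\la]^m$ with $N\beta=p\,w$. Choosing $j_0$ so that $\beta_{j_0}\neq 0$ and $d'_{j_0}=\max\{d'_j:\beta_j\neq 0\}$, the estimate $\deg w=\deg(N\beta)-\deg p\leq d'_{j_0}+(\deg p-1)-\deg p=d'_{j_0}-1$ makes the replacement basis $\{N_1,\dots,N_{j_0-1},w,N_{j_0+1},\dots,N_k\}$ a polynomial basis of the same rational span with a strictly smaller column-degree sum.

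For the if direction, I would start from the observation that the full-rank condition on $N(\la_0)$ for every $\la_0\in\ov\FF$ forces all invariant factors in the Smith form of $N$ to be units, and hence $N$ has a polynomial left inverse $L\in\FF[\la]^{k\times m}$. Consequently, every polynomial vector in the rational span of the $N_j$'s is automatically a polynomial $\FF[\la]$-combination of them: if $v=Nr$ with $r\in\FF(\la)^k$, then $r=Lv\in\FF[\la]^k$. So any alternative polynomial basis $\{B_1,\dots,B_k\}$ of the same span can be written $B=NR$ with $R\in\FF[\la]^{k\times k}$ nonsingular. The column-reducedness of $N$ supplies the predictable-degree identity $\deg(Nr_j)=\max_i(d'_i+\deg r_{ij})$ for each column $r_j$ of $R$, and selecting a permutation $\sigma$ for which $\sum_j\deg r_{\sigma(j),j}\geq\deg\det R\geq 0$ (which exists by the Leibniz expansion of $\det R$) yields $\sum_j\deg B_j\geq\sum_j(d'_{\sigma(j)}+\deg r_{\sigma(j),j})=\sum_j d'_j+\sum_j\deg r_{\sigma(j),j}\geq\sum_j d'_j$, confirming the minimality of the columns of $N$.

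I expect the trickiest step to be the construction of $\beta$ in the $\la_0\in\ov\FF\setminus\FF$ subcase: one must lift a null vector of $N(\la_0)$ from the residue field $\FF[\la]/(p)$ back to $\FF[\la]$ with entries of controlled degree, and the bound $\deg\beta_j<\deg p$ is essential for the subsequent degree estimate to close. The other ingredients --- the Smith-form characterization of polynomial left-invertibility, the predictable-degree property of column-reduced matrices, and the Leibniz-expansion bound on $\deg\det R$ --- are all standard and should slot in cleanly.
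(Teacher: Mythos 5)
Your proof is correct. Note, however, that the paper does not prove this theorem at all: it is quoted verbatim as the classical Forney characterization, with the proof deferred to \cite{For75} (see also Kailath's book), so there is no in-paper argument to compare against. What you have written is a complete and accurate reconstruction of that standard argument --- exchange of a column via a unimodular/rational substitution when column-reducedness or the pointwise rank condition fails (including the correct passage through the residue field $\FF[\la]/(p)$ for $\la_0\in\ov\FF\setminus\FF$), and, for the converse, polynomial left-invertibility from the trivial Smith form combined with the predictable-degree property and the Leibniz bound on $\deg\det R$. The only point worth a remark is the degenerate case $Nu=0$ in your first exchange step, which does not yield a shorter basis but instead directly contradicts the linear independence of the columns; either way the contrapositive closes.
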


\begin{remark} \label{rem:NEWconvention}
In this paper we follow the convention in \cite{For75} and often say, for brevity, that a $p \times q$ polynomial matrix $N(\la)$ is a {\em minimal basis} if the columns (when $q<p$) or rows (when $p<q$) of $N(\la)$ form a minimal basis of the rational subspace they span. Most of the minimal bases considered in this paper are arranged as the rows of matrices. Recall also that if $M(\la)\in \bF[\lambda]^{m\times k}$ is a row (resp. column) reduced polynomial matrix, then $M(\la)$ has full row (resp. column) (normal) rank.
\end{remark}

Next, we introduce the concept of {\em dual minimal bases}, whose origins can be found in \cite[Section 6]{For75} and that has played a key role in a number of recent applications (see \cite{VD} and \cite{DDMV} for more information).
\begin{definition}\label{dual-m-b}
  Polynomial matrices $M(\lambda)\in \bF[\lambda]^{m\times k}$
  and $N(\lambda)\in \bF[\lambda]^{n\times k}$ with full row ranks
  are said to be {\em dual minimal bases}
  if they are minimal bases satisfying $m+n=k$ and $M(\lambda) \, N(\lambda)^T=0$.
\end{definition}
In the language of null-spaces of polynomial matrices, observe that $M(\lambda)$ is a minimal basis of ${\cal N}_\ell (N(\la)^T)$
and that $N(\lambda)^T$ is a minimal basis of ${\cal N}_r (M(\la))$.
As a consequence, the right minimal indices of $M(\la)$ are the row degrees of $N(\la)$ and the left minimal indices of $N(\la)^T$ are the row degrees of $M(\la)$.

The next theorem reveals a fundamental relationship between the row degrees of dual minimal bases. Its first part was proven in \cite{For75}, while the second (converse) part has been proven very recently in \cite{DDMV}.
\begin{theorem} \label{thm:dualbasis}
  Let $M(\la)\in \bF[\lambda]^{m\times (m + n)}$ and $N(\lambda) \in \bF[\lambda]^{n\times (m+n)}$
  be dual minimal bases with row degrees $(\eta_1,\hdots,\eta_m )$
  and $(\varepsilon_1,\hdots,\veps_{n} )$, respectively.
  Then
\begin{equation} \label{eqn.keyequality}
  \sum_{i=1}^m \eta_i \,=\, \sum_{j=1}^{n}\varepsilon_j \,.
\end{equation}
Conversely, given any two lists of nonnegative integers
$(\eta_1,\hdots,\eta_m)$ and $(\varepsilon_1,\hdots,\veps_{n})$
satisfying \eqref{eqn.keyequality},
there exists a pair of dual minimal bases $M(\la)\in \bF[\lambda]^{m\times (m+n)}$
and $N(\lambda) \in \bF[\lambda]^{n\times (m+n)}$
with precisely these row degrees, respectively.
\end{theorem}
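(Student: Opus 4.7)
My plan is to address the two halves of the theorem separately, since they are of quite different nature. For the forward direction, I would first argue that the hypotheses of the theorem force $(\varepsilon_1,\ldots,\varepsilon_n)$ to be exactly the right minimal indices of $M$. Indeed, since $M(\la)N(\la)^T = 0$, the $n$ rows of $N$ lie in $\cN_r(M)$; they are linearly independent over $\FF(\la)$ because $N$ has full row rank $n$; and $\dim \cN_r(M) = (m+n) - m = n$, so these rows span $\cN_r(M)$. The fact that $N$ is itself a minimal basis then makes $N$ a minimal basis of $\cN_r(M)$. With this identification, \eqref{eqn.keyequality} reduces to Forney's classical identity: for a minimal basis $M$, the sum of its row degrees equals the sum of its right minimal indices. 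To prove the identity I would use the Smith decomposition $M(\la) = U(\la)\,[I_m\;0]\,V(\la)$ (available from Theorem~\ref{minbasis_th}, since a minimal basis has trivial Smith form), extract a polynomial basis of $\cN_r(M)$ from the last $n$ columns of $V(\la)^{-1}$, and then match row-degree sums on both sides by invoking the row-reducedness of $M$ together with the predictable-degree property of minimal bases. Alternatively, one may recognize both sums as the McMillan degree of $M$ considered as a rational matrix.

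For the converse, the plan is an explicit construction from Kronecker-type elementary dual minimal basis pairs. For each positive integer $k$, the pair
\[
L_k(\la) = \begin{pmatrix} -\la & 1 & & \\ & \ddots & \ddots & \\ & & -\la & 1 \end{pmatrix} \in \FF[\la]^{k \times (k+1)},\qquad \Lambda_k(\la) = (1,\la,\ldots,\la^k) \in \FF[\la]^{1 \times (k+1)},
\]
satisfies $L_k(\la)\Lambda_k(\la)^T = 0$ and each factor is individually a minimal basis (trivially row reduced with full rank at every $\la_0\in\overline\FF$). Given arbitrary lists $(\eta_i)$, $(\varepsilon_j)$ satisfying $\sum\eta_i = \sum\varepsilon_j$, one assembles the required $(M,N)$ from a suitable combination of elementary blocks. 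When the two lists can be paired off one-to-one, a direct sum of such blocks works immediately; the general case requires an assembly that interleaves blocks along common columns so that the row-degree multisets of $M$ and $N$ come out exactly as prescribed. This is the recent constructive result of~\cite{DDMV}.

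I expect the main obstacle to lie precisely in this combinatorial assembly for the converse. The sum identity $\sum_i\eta_i = \sum_j\varepsilon_j$ is considerably weaker than a one-to-one pairing between the two lists; in particular it allows $m \neq n$, which a naive direct-sum construction cannot accommodate. The challenge is to redistribute the total ``degree budget'' across the rows of $M$ and $N$ while simultaneously preserving the orthogonality $M N^T = 0$, the row-reducedness of each matrix, and the full-rank condition at every $\la_0\in\overline\FF$ that Theorem~\ref{minbasis_th} requires. By contrast, the forward direction reduces cleanly to Forney's classical index-sum identity for minimal bases once the interpretation of $(\varepsilon_j)$ as the right minimal indices of $M$ has been set up.
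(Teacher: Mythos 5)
Your proposal is consistent with how the paper treats this statement: the paper gives no proof of Theorem \ref{thm:dualbasis} at all, but simply attributes the degree-sum identity to Forney \cite{For75} and the converse construction to \cite{DDMV}, which are exactly the two sources your sketch reduces to (Forney's index-sum identity for the forward direction after identifying the rows of $N$ as a minimal basis of $\cN_r(M)$, and the explicit dual-minimal-basis construction of \cite{DDMV} for the converse). Since you correctly locate the real work in the same places the paper does, there is nothing further to compare.
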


This paper studies the set of polynomial matrices of size $m \times (m + n)$ and with row degrees at most $d_1, d_2, \ldots , d_m$, where $d_1, d_2, \ldots , d_m$ are given nonnegative integers which are stored in the list $\underline{d} := (d_1, d_2, \ldots , \allowbreak d_m)$. This set is formally defined as
\begin{equation} \label{eq.Fdbar}
\FF[\la]^{m \times (m+n)}_{\underline{d}} :=
\left\{
\left[\begin{array}{c} R_1(\lambda) \\ R_2(\lambda) \\ \vdots \\  R_m(\lambda)\end{array}\right] \, : \, \begin{array}{l}
R_i(\la) =R_{i,0}+ R_{i,1} \la + \cdots +  R_{i,d_i} \la^{d_i}, \\ R_{i,j}\in \FF^{1 \times (m+n)}, \quad 1 \leq i \leq m, \; 0 \leq j \leq d_i
\end{array}
\right\} \, .
\end{equation}
Observe that $\FF[\la]^{m \times (m+n)}_{\underline{d}}$ is a linear space  over the field $\mathbb{F}$ of dimension $(m+n)\sum_{i=1}^m (d_i+1)$. Next, we define, attached to each matrix in $\FF[\la]^{m \times (m+n)}_{\underline{d}}$, a constant matrix that will be often used in this paper.
\begin{definition} \label{def.Mbard} Let $M(\la) \in \FF[\la]^{m \times (m+n)}_{\underline{d}}$, where $\underline{d} = (d_1, d_2, \ldots , d_m)$, and let $R_{i,d_i} \in \FF^{1 \times (m+n)}$ be the vector coefficient of $\la^{d_i}$ in the {\rm $i$th} row of $M(\la)$ for $i =1,\ldots, m$. The leading row-wise coefficient matrix of $M(\la)$ is defined as
\[
M_{\underline{d}}:= \left[\begin{array}{c} R_{1,d_1} \\  R_{2,d_2} \\ \vdots \\  R_{m,d_m} \end{array}\right] \in \FF^{m \times (m+n)}.
\]
\end{definition} \noindent
Note that $M_{\underline{d}}$ is in general different from the highest-row-degree coefficient matrix $M_{hr}$ of $M (\la)$ introduced in Definition \ref{colred}. This is related to the linear space structure of $\FF[\la]^{m \times (m+n)}_{\underline{d}}$ and is emphasized in the next simple lemma whose trivial proof is omitted.
\begin{lemma} \label{lemm.coeffmatrix}  Let $M(\la) \in \FF[\la]^{m \times (m+n)}_{\underline{d}}$, where $\underline{d} = (d_1, d_2, \ldots , d_m)$, and let $R_{i} (\la)$ be the {\rm $i$th} row of $M(\la)$ for $i =1,\ldots, m$. Then:
\begin{itemize}
  \item[\rm (a)] $M_{\underline{d}} = M_{hr}$ if and only if $\deg(R_{i} (\la)) = d_i$ for $i = 1,\ldots, m$.
  \item[\rm (b)]  If $\rank (M_{\underline{d}}) = m$, then $M_{\underline{d}} = M_{hr}$.
\end{itemize}
\end{lemma}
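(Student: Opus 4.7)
The plan is to exploit the fact that $M_{\underline{d}}$ and $M_{hr}$ differ only in which power of $\la$ is extracted row by row: $M_{\underline{d}}$ always extracts the vector coefficient of $\la^{d_i}$ from $R_i(\la)$, whereas $M_{hr}$ extracts the coefficient of $\la^{\deg(R_i)}$; recall that $\deg(R_i) \leq d_i$ by definition of $\FF[\la]^{m \times (m+n)}_{\underline{d}}$. The whole proof is then a matter of comparing these extractions row by row under the two hypotheses.

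For the ``if'' direction of part (a), the equality $\deg(R_i) = d_i$ makes the two prescriptions for the $i$th row identical, so the matrices coincide. For the ``only if'' direction I would argue by contrapositive: if $\deg(R_k) < d_k$ for some index $k$, then the coefficient of $\la^{d_k}$ in $R_k$ is the zero vector, so the $k$th row of $M_{\underline{d}}$ is zero, while the $k$th row of $M_{hr}$ is the coefficient of $\la^{\deg(R_k)}$ in $R_k$, which is nonzero by the standard convention that the degree of a nonzero polynomial is the largest exponent appearing with a nonzero coefficient. Hence the two matrices disagree in row $k$, contradicting $M_{\underline{d}} = M_{hr}$.

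For part (b), if $\rank(M_{\underline{d}}) = m$ then no row of $M_{\underline{d}}$ is zero, so for every $i$ the coefficient of $\la^{d_i}$ in $R_i$ is nonzero. This forces $\deg(R_i) \geq d_i$, and combined with the defining constraint $\deg(R_i) \leq d_i$ of $\FF[\la]^{m \times (m+n)}_{\underline{d}}$, yields $\deg(R_i) = d_i$ for every $i$. Part (a) then gives $M_{\underline{d}} = M_{hr}$.

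The main obstacle is essentially nonexistent; this lemma is just bookkeeping about the distinction between the fixed degree bound $d_i$ attached to the ambient space and the actual row degree $\deg(R_i)$ of a particular element. The only subtlety is the convention for $\deg(0)$, but in the contrapositive argument for (a) a zero $R_k$ still yields a zero $k$th row in both matrices, and in (b) the full-rank hypothesis excludes zero rows outright. This matches the author's description of the proof as trivial.
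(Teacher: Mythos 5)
Your proof is correct and coincides with the paper's treatment: the authors omit the proof of part (a) as trivial and derive part (b) exactly as you do, from the observation that $\rank(M_{\underline{d}})=m$ forces every $R_{i,d_i}$ to be nonzero, hence $\deg(R_i)=d_i$, so that (a) applies. The only caveat is the one you half-acknowledge at the end: if some row $R_k(\la)$ is identically zero, then both matrices have a zero $k$th row and your contrapositive for the ``only if'' direction of (a) yields nothing from that row --- but in that degenerate case the ``only if'' implication of the lemma itself fails under the convention $\deg(0)=-\infty$, so this is a harmless imprecision shared with the statement rather than a defect of your argument.
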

\noindent Part (b) of Lemma \ref{lemm.coeffmatrix} follows from part (a) because $\rank (M_{\underline{d}}) = m$ implies that $R_{i,d_i} \ne 0$ for $i =1,\ldots, m$.

The linear space of polynomial matrices of size $m \times (m + n)$ and degree at most $d$ is also used in this paper and is denoted and defined as follows:
\begin{equation}\label{def.largelinsapce}
\FF[\la]^{m \times (m+n)}_{d} := \left\{ M_0 + M_1 \la + \cdots +  M_d \la^d \, : \, M_i\in \FF^{m\times (m+n)}, \, 0 \leq i \leq d \right\} \, .
\end{equation}
The set $\FF[\la]^{m \times (m+n)}_d $ is also a linear space over $\FF$ and its dimension is $m(m+n)(d+1)$. Given any $M(\la) = M_0 + M_1 \la + \cdots +  M_d \la^d \in \FF[\la]^{m \times (m+n)}_{d}$, the matrix $M_d$ is called the leading coefficient matrix of $M(\la)$. In the case $d = \max_{1 \leq i \leq m} d_i$, it is clear that $\FF[\la]^{m \times (m+n)}_{\underline{d}} \subseteq \FF[\la]^{m \times (m+n)}_{d}$, with equality if and only if $d = d_1 = \cdots = d_m$. Therefore, $\FF[\la]^{m \times (m+n)}_{\underline{d}}$ is a linear subspace of $\FF[\la]^{m \times (m+n)}_{d}$. Throughout the paper we will assume that $m > 0$, $n > 0$, and $d >0$ for avoiding trivialities.

Finally, we illustrate with an example the differences among $M_{\underline{d}}, M_{hr}$, and $M_d$.
\begin{example} Let $m= 3$, $n=1$, $\underline{d} = (1, 3, 10)$, and $d=10$. Consider
\[
M(\la) = \left[ \begin{array}{cccc}
                  1 & 0 & \la + 1 & \la \\
                  \la^2 & 2 & \la & 2 \la^2 \\
                  -1 & \la^{10} - \la^9 & 1 & \la^8
                \end{array}\right] \in \FF[\la]^{3 \times 4}_{\underline{d}} \, .
\]
Then,
\[
M_{hr} = \left[ \begin{array}{cccc}
                  0 & 0 & 1 & 1 \\
                  1 & 0 & 0 & 2 \\
                  0 & 1 & 0 & 0
                \end{array}\right], \quad
M_{\underline{d}} = \left[ \begin{array}{cccc}
                  0 & 0 & 1 & 1 \\
                  0 & 0 & 0 & 0 \\
                  0 & 1 & 0 & 0
                \end{array}\right], \quad
M_d = \left[ \begin{array}{cccc}
                  0 & 0 & 0 & 0 \\
                  0 & 0 & 0 & 0 \\
                  0 & 1 & 0 & 0
                \end{array}\right] \, .
\]
So, in this case, the three matrices are different. If we had considered $\underline{d} = (1, 2, 10)$, then we would have obtained $M_{\underline{d}} = M_{hr}$.
\end{example}
It is interesting to remark, at the light of the previous example, that given a polynomial matrix $M(\la)$, the matrix $M_{hr}$ is intrinsically attached to $M(\la)$, while $M_{\underline{d}}$ and $M_d$ vary with the list $\underline{d}$  and the value $d$ that are considered, i.e., with the sets containing $M(\la)$ that are considered.

\section{Trimmed Sylvester matrices of polynomials in $\FF[\la]^{m \times (m+n)}_{\underline{d}}$} \label{Sec:Trimmed} In this section, we introduce and study certain constant matrices attached to the polynomial matrices in $\FF[\la]^{m \times (m+n)}_{\underline{d}}$. These matrices are called {\em trimmed Sylvester matrices} and are essential for obtaining the results in this paper. They are built from the Sylvester matrices \cite{AndJuryIEEE76,BKAK} associated to the polynomial matrices in $\FF[\la]^{m \times (m+n)}_{d}$, which were heavily used in \cite{VD} and whose definition is refreshed below.

\begin{definition} \label{def.Sylmatrices} Let $M(\la) = M_0 + M_1 \la + \cdots +  M_d \la^d \in \FF[\la]^{m \times (m+n)}_{d}$. The {\rm $k$th} Sylvester matrix of $M(\la)$ is defined as
\begin{equation}\label{eq:Sylvester}
   S_k(M) := \underbrace{\left[ \begin{array}{ccccc} M_0 \\ M_1 & M_0 \\
   \vdots & M_1 &  \ddots \\
   M_d & \vdots & \ddots & M_0 \\
   0 & M_d & & M_1\\
   \vdots & \ddots & \ddots & \vdots \\
   0 & \ldots & 0 & M_d
   \end{array}\right]}_{\mbox{\rm $k$ block columns}} \in  \FF^{(k+d)m\times k(m+n)} \, .
  \end{equation}
\end{definition}
\noindent When it is obvious from the context, we will drop the argument $(M)$ and just use $S_k$ for denoting the $k$th Sylvester matrix of $M(\la)$.

The Sylvester matrices of those $M(\la) \in \FF[\la]^{m \times (m+n)}_{\underline{d}} \subseteq \FF[\la]^{m \times (m+n)}_{d}$, where $\underline{d} = (d_1, d_2, \ldots , d_m)$ and $d = \max_{1 \leq i \leq m} d_i$, have several rows that are zero {\em for any} $M(\la) \in \FF[\la]^{m \times (m+n)}_{\underline{d}}$. These zero rows are identified in the next lemma, whose simple proof is omitted.

\begin{lemma} \label{lemm.zerorows} Let $M(\la) \in \FF[\la]^{m \times (m+n)}_{\underline{d}} \subseteq \FF[\la]^{m \times (m+n)}_{d}$, where $\underline{d} = (d_1, d_2, \ldots , d_m)$ and $d = \max_{1 \leq i \leq m} d_i$, and let $R_{i,j} \in \FF^{1 \times (m+n)}$ be the vector coefficient of $\la^j$ in the {\rm $i$th} row $R_i (\la)$ of $M(\la)$, for $i =1,\ldots, m$ and $j = 0, 1, \ldots, d_i$, as in \eqref{eq.Fdbar}. Then the submatrix of $S_k (M)$ that selects the {\rm $i$th} row of each of the $(k+d)$ block rows of $S_k (M)$ is
\begin{equation}\label{eq:SylvesterR}
   S_k(R_i) := \underbrace{\left[ \begin{array}{ccccc} R_{i,0} \\ R_{i,1} & R_{i,0} \\
   \vdots & R_{i,1} &  \ddots \\
   R_{i,d_i} &\vdots & \ddots & R_{i,0} \\
   0_{1 \times (m+n)} & R_{i,d_i} & & R_{i,1}\\
   \vdots & 0_{1 \times (m+n)} & \ddots & \vdots \\
   \vdots &  & \ddots & R_{i,d_i} \\
   0 & \ldots & \ldots & 0_{(d-d_i)\times (m+n)}
   \end{array}\right]}_{\mbox{\rm $k$ block columns}} \in  \FF^{(k+d)\times k(m+n)} \, ,
  \end{equation}
where the definition of $S_k(R_i)$ assumes that $R_i (\la) \in \FF[\la]_d^{1 \times (m+n)}$.
\end{lemma}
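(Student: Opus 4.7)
The plan is a direct block-by-block verification: I just need to show that selecting the $i$th row of each block row of $S_k(M)$ produces exactly the matrix displayed in \eqref{eq:SylvesterR}, with the zero rows at the bottom accounted for by the row-degree constraint on $M(\la)$.

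First, I would translate the defining condition $M(\la) \in \FF[\la]^{m \times (m+n)}_{\underline{d}}$ into information about the coefficient matrices $M_0, M_1, \ldots, M_d$ of $M(\la)$ regarded as an element of the larger space $\FF[\la]^{m \times (m+n)}_d$ with $d = \max_i d_i$. By the definition of $\FF[\la]^{m \times (m+n)}_{\underline{d}}$ in \eqref{eq.Fdbar}, the $i$th row of $M_j$ equals $R_{i,j}$ for $0 \leq j \leq d_i$ and is the zero row $0_{1\times(m+n)}$ for $d_i < j \leq d$. This is the only substantive input from the hypothesis.

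Next, I would index the block rows of $S_k(M)$ by $\ell \in \{1,\ldots,k+d\}$ and the block columns by $c \in \{1,\ldots,k\}$, and read off from \eqref{eq:Sylvester} that the block at position $(\ell,c)$ is $M_{\ell-c}$ when $0 \leq \ell-c \leq d$ and the $m \times (m+n)$ zero block otherwise. Taking the $i$th row of this block therefore produces $R_{i,\ell-c}$ when $0 \leq \ell-c \leq d_i$, and $0_{1\times(m+n)}$ in all remaining cases, either because the whole block is zero ($\ell-c<0$ or $\ell-c>d$) or because the $i$th row of $M_{\ell-c}$ vanishes ($d_i < \ell-c \leq d$).

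Finally, I would compare this with $S_k(R_i)$ as displayed in \eqref{eq:SylvesterR}: its block entry at position $(\ell,c)$ is $R_{i,\ell-c}$ for $0\le \ell-c\le d_i$ and $0_{1\times(m+n)}$ otherwise, with the explicit zero block rows at the bottom reflecting precisely the convention $R_i(\la) \in \FF[\la]^{1\times(m+n)}_d$ (so that $S_k(R_i)$ has $k+d$, rather than $k+d_i$, block rows). The two descriptions agree, which is the claim. There is no real obstacle here; the lemma is a bookkeeping identity, and the only point worth being careful about is the ``padding'' convention that gives $S_k(R_i)$ the same number of block rows as $S_k(M)$.
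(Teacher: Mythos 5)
Your verification is correct, and it is exactly the direct bookkeeping argument the paper has in mind: the paper omits the proof as "simple," and the only substantive point — that the $i$th row of $M_j$ is $R_{i,j}$ for $j\le d_i$ and zero for $d_i<j\le d$, combined with the block-entry description of $S_k(M)$ — is precisely what you supply. Nothing is missing.
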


Observe that Lemma \ref{lemm.zerorows} identifies $(d-d_i)$ zero rows at the bottom of each $S_k(R_i)$ and, so, a total of $md - \sum_{i=1}^{m} d_i$ zero rows in $S_k(M)$ {\em for any} $M(\la) \in \FF[\la]^{m \times (m+n)}_{\underline{d}}$. We emphasize, first, that the number of these zero rows is independent of $k$ and, second, that for a particular $M(\la) \in \FF[\la]^{m \times (m+n)}_{\underline{d}}$, the matrix $S_k(R_i)$ can have more zero rows and, so, the same happens for $S_k(M)$. Such additional zero rows appear, for instance, if $R_{i,0}=0$ or if $R_{i,d_i}=0$ or if $R_{i,j}=0$ for all $0 \leq j \leq (k-1)$, for some $i$, as well as in other situations. Since the rows of $S_k(M)$ that are zero for all the elements of $\FF[\la]^{m \times (m+n)}_{\underline{d}}$ do not carry any information on the polynomial matrices of this set, we can remove them, which leaves $k+d_i$ rows coming from each $S_k(R_i)$ and a total of $km+\sum_{i=1}^m d_i$ rows coming from the whole $S_k(M)$. This process leads to the definition of the key constant matrices used in this paper.

\begin{definition} \label{def.trimSylmatrices} Let $M(\la) \in \FF[\la]^{m \times (m+n)}_{\underline{d}} \subseteq \FF[\la]^{m \times (m+n)}_{d}$, where $\underline{d} = (d_1, d_2, \ldots , d_m)$ and $d = \max_{1 \leq i \leq m} d_i$, and let $S_k (M)$ and $S_k(R_i)$ be, respectively, the {\rm $k$th} Sylvester matrices of $M(\la)$ and of the {\rm $i$th} row of $M(\la)$. The {\rm $k$th} {\rm trimmed Sylvester matrix} of $M(\la)$ is the submatrix of $S_k (M)$ obtained by removing, for $i=1,\ldots ,m$, the $(d-d_i)$ zero rows at the bottom of the submatrix of $S_k (M)$ corresponding to $S_k(R_i)$. The {\rm $k$th} trimmed Sylvester matrix of $M(\la)$ is denoted as
$$T_k(M)\in  \FF^{(km+\sum_{i=1}^m d_i)\times k(m+n)}.$$
\end{definition}

As in the case of Sylvester matrices, we will drop the argument $(M)$ and just use $T_k$ for denoting the $k$th trimmed Sylvester matrix of $M(\la)$, when it is obvious from the context.

Trimmed Sylvester matrices satisfy the structural nesting property that is shown in Lemma \ref{lemm.nesttrim}. This nesting property differs from the one of Sylvester matrices that is displayed in the proof of \cite[Lemma 3.2]{VD} in two aspects: first, it requires the use of a permutation and, second, it involves the matrix $M_{\underline{d}}$ introduced in Definition \ref{def.Mbard}, instead of the matrix $M_d$ that appears in Sylvester matrices. Nevertheless, this nesting property will allow us to prove for the polynomial matrices in $\FF[\la]^{m \times (m+n)}_{\underline{d}}$ properties analogous to those proved in \cite{VD} for the matrices in $\FF[\la]^{m \times (m+n)}_{d}$ using the same techniques, with changes just coming from the use of $M_{\underline{d}}$ and from the fact that $T_k(M)$ and $S_k(M)$ have different sizes. In particular, we will prove in Section \ref{Sec:Genericity} that the right minimal indices of the matrices in $\FF[\la]^{m \times (m+n)}_{\underline{d}}$ are generically ``almost homogeneous'' (i.e., they differ at most by one), as also happens in $\FF[\la]^{m \times (m+n)}_{d}$. Such ``almost homogeneity''  may seem surprising at a first glance, since the row degrees of the matrices in $\FF[\la]^{m \times (m+n)}_{\underline{d}}$ are generically equal to the entries of $\underline{d}$ and, so, they are extremely unbalanced if the entries of $\underline{d}$ are, in contrast with those of the matrices in $\FF[\la]^{m \times (m+n)}_d$, which are generically all equal to $d$.

\begin{lemma} \label{lemm.nesttrim}
Let $M(\la) \in \FF[\la]^{m \times (m+n)}_{\underline{d}}$, let $T_k$ be the trimmed Sylvester matrices of $M(\la)$ for $k=1,2,\ldots$, and let $M_{\underline{d}}$ be the leading row-wise coefficient matrix of $M(\la)$ as in Definition \ref{def.Mbard}. Then, there exist permutation matrices $P_k$, for $k=1,2, \ldots$, such that
$$
P_{1} \, T_{1} = \left[\begin{array}{c} X_1 \\ M_{\underline{d}} \end{array}\right] \quad \mbox{and} \quad
P_{k+1} \, T_{k+1} = \left[\begin{array}{c|c} T_k & X_{k+1} \\  \hline 0 & M_{\underline{d}} \end{array}\right] \quad \mbox{for $k=1,2,...$}.
$$
Moreover, $P_k$ depends on $k$ and $\underline{d}$ but not on $M(\la)$.
\end{lemma}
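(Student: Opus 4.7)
The plan is to reduce the claim to a row-by-row analysis and then reassemble using a suitable permutation. For each fixed $i \in \{1, \ldots, m\}$, I would first establish the nesting for the single-row trimmed matrix $T_k(R_i) \in \FF^{(k+d_i) \times k(m+n)}$, obtained from $S_k(R_i)$ by deleting the $d-d_i$ zero rows at the bottom per Definition \ref{def.trimSylmatrices}. Inspecting the banded Toeplitz structure of $S_{k+1}(R_i)$ in \eqref{eq:SylvesterR}, one checks directly that the first $k+d_i$ rows of $T_{k+1}(R_i)$ agree with $T_k(R_i)$ in the first $k$ block columns, and the unique additional (surviving) row, lying in block row $k+1+d_i$, has the form $[\,0_{1 \times k(m+n)}\,,\, R_{i,d_i}\,]$; indeed, for $\ell = k+1+d_i$, the only block-column index $c \in \{1,\dots,k+1\}$ for which $0 \leq \ell - c \leq d_i$ is $c = k+1$, yielding the entry $R_{i,d_i}$ in the last block column. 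Consequently
\[
T_{k+1}(R_i) \;=\; \left[\begin{array}{cc} T_k(R_i) & Y_i \\ 0_{1 \times k(m+n)} & R_{i,d_i} \end{array}\right],
\]
where $Y_i$ collects the entries of the new $(k+1)$-st block column in rows $1, \ldots, k+d_i$.

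Next, I would assemble $T_{k+1}(M)$ by interleaving the $T_{k+1}(R_i)$ according to the lexicographic (block-row, row-within-block) ordering that $T_{k+1}(M)$ inherits from $S_{k+1}(M)$. The rows of $T_{k+1}(M)$ then split into two classes: the rows originating from block rows $j \leq k+d_i$, which together form exactly the rows of $T_k(M)$ with a new last column block appended on the right; and the $m$ \emph{extremal} rows, one per $i$, coming from block row $k+1+d_i$ of $S_{k+1}(M)$, each of the shape $[\,0_{1 \times k(m+n)}\,,\, R_{i,d_i}\,]$ by the row-wise analysis above. I would then define $P_{k+1}$ as the permutation that preserves the relative order of the first class at the top and sends the $m$ extremal rows to the bottom in the order $i = 1, 2, \ldots, m$. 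After applying $P_{k+1}$, the top $km + \sum_i d_i$ rows form $[\,T_k \,|\, X_{k+1}\,]$ (with $X_{k+1}$ being the stack of the $Y_i$'s in the correct order), while the bottom $m$ rows assemble precisely to $[\,0\,|\,M_{\underline{d}}\,]$ by the definition of $M_{\underline{d}}$ in Definition \ref{def.Mbard}. The base case $k = 1$ is treated identically, simply without a $T_0$ on the left: the bottom $m$ rows after $P_1$ form $M_{\underline{d}}$, and the remaining $\sum_i d_i$ rows define $X_1$.

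Finally, I would observe that the selection criterion ``row $i$ of block row $j$ is extremal iff $j = k + 1 + d_i$'' and the relative order among non-extremal rows are both dictated purely by the integers $k$ and $\underline{d}$, with no dependence on the actual coefficients $R_{i,j}$. Hence $P_{k+1}$ is independent of $M(\lambda)$, as claimed. The only delicate step is the bookkeeping required to verify that the non-extremal rows in $T_{k+1}(M)$, when listed in their inherited order, match the natural order of the rows of $T_k(M)$; this, however, is automatic because both orderings descend from the same lexicographic ordering of Sylvester rows, and the row-wise nesting established in the first step guarantees the correct alignment of columns.
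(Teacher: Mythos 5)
Your proposal is correct and follows essentially the same route as the paper's proof: establish the single-row nesting $T_{k+1}(R_i)=\left[\begin{smallmatrix} T_k(R_i) & * \\ 0 & R_{i,d_i}\end{smallmatrix}\right]$ from the Toeplitz structure of \eqref{eq:SylvesterR}, then take $P_{k+1}$ to be the permutation sending the $m$ extremal rows to the bottom. Your version merely makes explicit the index computation ($c=k+1$ is the only surviving block column in row $k+1+d_i$) and the bookkeeping that the non-extremal rows inherit the ordering of $T_k(M)$, both of which the paper leaves implicit.
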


\begin{proof}
The existence of $P_1$ satisfying the first equality is obvious because $T_1$ contains the rows of $M_{\underline{d}}$. For the second equality, note that the way in which $T_{k+1}$ is obtained by removing zero rows from the Sylvester matrix $S_{k+1}$ of $M(\la)$ and equation \eqref{eq:SylvesterR} guarantee that
\[
   T_{k+1}(R_i) = \left[ \begin{array}{ccc|c} R_{i,0} &&&\\ R_{i,1} & R_{i,0} &&\\
   \vdots & R_{i,1} &  \ddots \\
   R_{i,d_i} & \vdots & \ddots & R_{i,0} \\
   0 & R_{i,d_i} & & R_{i,1}\\
   \vdots & \ddots & \ddots & \vdots \\ \hline
   0 & \hdots  & 0 & R_{i,d_i}
   \end{array}\right] =  \left[ \begin{array}{c|c} T_{k}(R_i) & X_{k+1}^{(i)} \\ \hline 0 &  R_{i,d_i} \end{array}\right] \,
\]
is a submatrix of $T_{k+1}$ for $i=1,\ldots , m$. If $P_{k+1}$ is the permutation matrix that moves the rows of $T_{k+1}$ corresponding to the last row of each of its submatrices $T_{k+1}(R_i)$ to the $m$ bottom positions, then $P_{k+1} T_{k+1}$ has the desired expression.
\end{proof}

Next, we illustrate the definition of trimmed Sylvester matrices and the nesting structure revealed in Lemma \ref{lemm.nesttrim} with two examples. The first one is symbolical and the second one numerical.

\begin{example} \label{ex.1symbolc} In this example, we consider $m = 3$ and $\underline{d} = (0,1,2)$, i.e., $d_1=0$, $d_2=1$ and $d_3=2$, and write explicitly the 3rd trimmed Sylvester matrix of any $M(\la) \in \FF[\la]^{3 \times (3 + n)}_{\underline{d}}$, using the notation in \eqref{eq.Fdbar}, as well as the nesting structure of $P_3 T_3$ involving $T_2$:
\[
T_3 = \left[ \begin{array}{c|c|c} R_{1,0} & & \\ R_{2,0} & & \\ R_{3,0} & & \\ \hline
0 & R_{1,0} \\    R_{2,1} & R_{2,0} \\   R_{3,1} & R_{3,0} \\ \hline
0 & 0 & R_{1,0} \\  0 &  R_{2,1} & R_{2,0} \\ R_{3,2} & R_{3,1} & R_{3,0} \\ \hline
& 0 &  R_{2,1} \\  & R_{3,2} & R_{3,1} \\ \hline & & R_{3,2} \\
\end{array}\right], \quad
P_3  T_3 = \left[ \begin{array}{cc|c} R_{1,0} & & \\ R_{2,0} & & \\ R_{3,0} & & \\
0 & R_{1,0} \\    R_{2,1} & R_{2,0} \\   R_{3,1} & R_{3,0} \\
0 &  R_{2,1} & R_{2,0} \\ R_{3,2} & R_{3,1} & R_{3,0} \\
& R_{3,2} & R_{3,1} \\  \hline
&  & R_{1,0} \\   &  &  R_{2,1} \\  & & R_{3,2} \\
\end{array}\right] =  \left[ \begin{array}{c|c}  T_2 & X_3 \\  \hline 0 & M_{\underline{d}}\end{array}\right] .
\]
The lines partitioning $T_3$ correspond to the partition of the Sylvester matrix $S_3$ and show that the last two block rows of $S_3$ were ``trimmed'' to get $T_3$, since all the block rows of $S_3$ have $3$ rows and the last two block rows of $T_3$ displayed above by the lines have $2$ rows and $1$ row, respectively.
\end{example}

\begin{example} \label{ex:1} In this example, we take $m=4$, $n=3$, $\underline{d} = (0,1,1,2)$, i.e., $d_1=0$, $d_2=d_3=1$ and $d_4=2$, and consider the following matrix $M(\la)\in  \FF[\la]^{4\times 7}_{\underline{d}}$:
	$$M(\la) = \left[ \begin{array}{ccccccc} 1 & 0 & & & &\\  & & -1 & \la & 0 & & \\ & & 0 & -1 & \la & & \\ & & & & & -1 & \la^2 \end{array} \right] .
	$$
	It can be easily checked that the 2nd trimmed Sylvester matrix of $M(\la)$ is given by
	$$T_2 = \left[ \begin{array}{ccccccc|ccccccc} 1 & 0 & & & & & \\ & & -1 & 0 & 0 & & \\ & & 0 & -1 & 0 & & \\ & & & & & -1 & 0 \\ \hline
	&  &&&&&& {\bf 1} & {\bf 0} & & & & & \\ && 0 & 1 & 0 &&& & & -1 & 0 & 0 & & \\ && 0 & 0 & 1 &&& & & 0 & -1 & 0 & & \\ &&&&& 0 & 0 & & & & & & -1 & 0 \\ \hline
	&&&&&&&  & & {\bf 0} & {\bf 1} & {\bf 0}  & & \\
	&&&&&&&  & & {\bf 0} & {\bf 0} & {\bf 1} & & \\
	&&&&& 0 & 1 &  & & & & &  0 & 0  \\ \hline
	&&&&&&& &&&&& {\bf 0} & {\bf 1}
	\end{array} \right].
	$$
In addition, it can be checked that in this case $T_2$ has full row rank. As in Example \ref{ex.1symbolc}, the lines partitioning $T_2$ correspond to the partition of the Sylvester matrix $S_2$ and show that the last two block rows of $S_2$ were ``trimmed'' to get $T_2$, since all the block rows of $S_2$ have $4$ rows and the last two block rows of $T_2$ have $3$ rows and $1$ row respectively. Moreover, the rows in the second block column of $T_2$ that are indicated in bold face are the leading row-wise coefficient matrix $M_{\underline{d}}$ of $M(\la)$, which in this example coincides with the highest-row-degree coefficient matrix $M_{hr}$ of $M(\la)$. The permuted matrix $P_2 T_2$ in Lemma \ref{lemm.nesttrim} is given in this example by
   	$$ P_2 T_2 = \left[ \begin{array}{c|c} T_1 & X_2 \\ \hline 0 & M_{\underline{d}}	\end{array} \right]  = \left[ \begin{array}{ccccccc|ccccccc}
    1 & 0 & & & & & \\ & & -1 & 0 & 0 & & \\
    & & 0 & -1 & 0 & & \\ & & & & & -1 & 0 \\ \cdashline{1-7}
    && 0 & 1 & 0 &&& & & -1 & 0 & 0 & & \\
    && 0 & 0 & 1 &&& & & 0 & -1 & 0 & & \\
    &&&&& 0 & 0 & & & & & & -1 & 0 \\ \cdashline{1-7}
       	&&&&& 0 & 1 &  & & & & &  0 & 0  \\ \hline
   	&  &&&&&& {\bf 1} & {\bf 0} & & & & & \\
   	&&&&&&&  & & {\bf 0} & {\bf 1} & {\bf 0}  & & \\
   	&&&&&&&  & & {\bf 0} & {\bf 0} & {\bf 1} & & \\
   	&&&&&&& &&&&& {\bf 0} & {\bf 1}
   	\end{array} \right],
   	$$
where the dashed lines partitioning $T_1$ allow us to see that the last two block rows of $S_1$ were ``trimmed'' for getting $T_1$.  Notice also that the rows of $T_1$ are exactly the $\sum_{i=1}^m (d_i+1)$ constant row coefficients $R_{i,j}$ of the polynomial matrix $M(\la)\in\FF[\la]^{m \times (m+n)}_{\underline{d}}$ appearing in \eqref{eq.Fdbar}. It is interesting to emphasize that for this particular $M(\la) \in  \FF[\la]^{4\times 7}_{\underline{d}}$ the $7$th row of $T_1$ is zero because it corresponds to the vector coefficient of $\la$ of the $4$th row of $M(\la)$, which is zero in this case. This zero row is not trimmed because for those matrices in $\FF[\la]^{4\times 7}_{\underline{d}}$ with such vector coefficient different from zero the $7$th row of $T_1$ is not zero. This remark is related to the discussion in the paragraph just below Lemma \ref{lemm.zerorows}.
\end{example}

The last result in this section establishes a number of basic properties about the ranks and right nullities of trimmed Sylvester matrices and Sylvester matrices and is partly based on Lemma \ref{lemm.nesttrim}.

\begin{lemma} \label{lemm.ranknulltk} Let $M(\la) \in \FF[\la]^{m \times (m+n)}_{\underline{d}} \subseteq \FF[\la]^{m \times (m+n)}_{d}$, where $\underline{d} = (d_1, d_2, \ldots , d_m)$ and $d = \max_{1 \leq i \leq m} d_i$, let $S_k$ be the {\rm $k$th} Sylvester matrix of $M (\la)$, and let $T_k$ be the {\rm $k$th} trimmed Sylvester matrix of $M(\la)$, for $k = 1, 2, \ldots$. Then, the following statements hold.
\begin{itemize}
  \item[\rm (1)] $\rank (S_k) = \rank (T_k)$.
  \item[\rm (2)] $\mbox{\rm right-nullity} (S_k) =  \mbox{\rm right-nullity} (T_k)$, where the right nullity of a matrix is the dimension of its right null space.
  \item[\rm (3)] If $S_k$ has full column rank for some $k >1$, then $S_\ell$ has full column rank for all $1 \leq \ell < k$.
  \item[\rm (4)] If $T_k$ has full column rank for some $k >1$, then $T_\ell$ has full column rank for all $1 \leq \ell < k$.
  \item[\rm (5)] If $d > d_j$ for some $j$, then $S_k$ has not full row rank.
  \item[\rm (6)] If $T_k$ has full row rank, then $T_\ell$ has full row rank for all $k < \ell$.
  \item[\rm (7)] If $T_k$ has full row rank for some $k$, then $\rank (M_{\underline{d}}) = m$, $M_{\underline{d}} = M_{hr}$, and $d_i = \deg (\mbox{\rm row}_i (M(\la))$, for $i=1,\ldots, m$, where $M_{\underline{d}}$ and $M_{hr}$ are the matrices introduced in Definitions \ref{def.Mbard} and \ref{colred}, respectively.
\end{itemize}
\end{lemma}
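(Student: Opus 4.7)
The plan is to address the seven statements by combining the definition of $T_k$ as a row-subset of $S_k$, the nesting identity of Lemma \ref{lemm.nesttrim}, and Lemma \ref{lemm.coeffmatrix}. Parts (1) and (2) are essentially bookkeeping: since $T_k$ differs from $S_k$ only by the deletion of rows that are identically zero for every $M(\la) \in \FF[\la]^{m \times (m+n)}_{\underline{d}}$, their ranks coincide, and since both matrices have $k(m+n)$ columns, rank--nullity then yields (2).

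For (3), I would exploit the elementary observation, immediate from Definition \ref{def.Sylmatrices}, that the first $\ell$ block columns of $S_k$ equal $S_\ell$ padded with $(k-\ell)m$ trailing zero rows. Hence any nonzero right null vector $v$ of $S_\ell$ produces a nonzero right null vector of $S_k$ by appending $(k-\ell)(m+n)$ zero entries, contradicting full column rank of $S_k$. Part (4) then follows at once from (1) and (3): $T_k$ having full column rank is equivalent to $\rank(T_k) = k(m+n) = \rank(S_k)$, i.e., $S_k$ having full column rank, and by (3) so does $S_\ell$ for all $1 \leq \ell < k$, whence by (1) again $T_\ell$ has full column rank. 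Part (5) is a direct row count from Lemma \ref{lemm.zerorows}: if $d > d_j$ for some $j$, then $S_k$ contains at least $d - d_j \geq 1$ identically-zero rows, so its row rank is strictly less than $(k+d)m$.

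I would prove (7) before (6). By Lemma \ref{lemm.nesttrim}, in both the $k = 1$ case and the $k > 1$ case, the bottom $m$ rows of $P_k T_k$ coincide with $M_{\underline{d}}$ placed in the rightmost block column (with zeros to its left when $k > 1$). If $T_k$ has full row rank, then $P_k T_k$ does too, so these bottom $m$ rows are linearly independent, forcing $\rank(M_{\underline{d}}) = m$. Lemma \ref{lemm.coeffmatrix}(b) then yields $M_{\underline{d}} = M_{hr}$, and part (a) of the same lemma gives $\deg(\mbox{row}_i(M(\la))) = d_i$ for $i = 1, \ldots, m$, completing (7). For (6), I proceed by induction on $\ell$ starting from $\ell = k$: assuming $T_\ell$ has full row rank, Lemma \ref{lemm.nesttrim} provides
$$P_{\ell+1} T_{\ell+1} = \left[\begin{array}{c|c} T_\ell & X_{\ell+1} \\ \hline 0 & M_{\underline{d}} \end{array}\right],$$
so any left null vector $(c_1^T, c_2^T)$ of $P_{\ell+1} T_{\ell+1}$ satisfies $c_1^T T_\ell = 0$ and $c_1^T X_{\ell+1} + c_2^T M_{\underline{d}} = 0$. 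Full row rank of $T_\ell$ forces $c_1 = 0$, and $\rank(M_{\underline{d}}) = m$ (from (7), which applies at every stage of the induction) forces $c_2 = 0$, so $T_{\ell+1}$ has full row rank.

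The only conceptual point worth flagging is that (6) must be preceded by (7), so that the induction in (6) can invoke $\rank(M_{\underline{d}}) = m$ at each stage; all remaining work is a direct consequence of the block structure revealed by Lemma \ref{lemm.nesttrim} and routine linear algebra.
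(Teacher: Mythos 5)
Your proposal is correct and follows essentially the same route as the paper: parts (1)--(2) by the invariance of rank and nullity under deletion of identically zero rows, (3) by padding null vectors of $S_\ell$ with zeros, (4) from (1) and (3), (5) from the guaranteed zero rows, and (6)--(7) from the nesting identity of Lemma \ref{lemm.nesttrim} together with Lemma \ref{lemm.coeffmatrix}. Your explicit left-null-vector induction for (6) and your reordering to prove (7) first are just slightly more detailed renderings of the paper's own argument.
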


\begin{proof}
Recall that $T_k$ is obtained from $S_k$ by removing zero rows, an operation that does not change the rank and the right nullity. This proves parts (1) and (2). Part (3) is \cite[Lemma 3.1]{VD} and holds because each matrix $S_\ell$, with $1 \leq \ell < k$, properly padded with zeros forms the first $\ell$ block columns of $S_k$. Part (4) follows from parts (1) and (3), although can also be obtained from the second equality in Lemma \ref{lemm.nesttrim}, with $k+1$ replaced by $k$, through an induction argument since $\rank (P_k T_k) = \rank (T_k)$. Part (5) holds because if $d > d_j$, then the matrix $M_d$ appearing in the definition of $S_k$ in \eqref{eq:Sylvester} has its $j$th row equal to zero and, then, the $j$th row of the last block row of $S_k$ is zero.

We only prove part (6) for $\ell = k+1$, since then the result for larger values of $\ell$ follows easily by induction. The proof requires some minor changes with respect to that of \cite[Lemma 3.2]{VD}, which states the same property for Sylvester matrices. The proof of (6) for $\ell = k+1$ is as follows: use first the equalities in Lemma \ref{lemm.nesttrim} (the second one with $k+1$ replaced by $k$, if $k >1$) and $\rank (P_k T_k) = \rank (T_k)$ to see that if $T_k$ has full row rank, then $M_{\underline{d}}$ has also full row rank; then, in a second step, use the second equality in Lemma \ref{lemm.nesttrim}, the fact that $\rank (P_{k+1} T_{k+1}) = \rank (T_{k+1})$, and the facts that $T_k$ and $M_{\underline{d}}$ have both full row rank to get that $T_{k+1}$ has also full row rank.

Finally, to prove part (7) note that, as above, if $T_k$ has full row rank, then $M_{\underline{d}}$ has also full row rank. The application of Lemma \ref{lemm.coeffmatrix} completes the proof.
\end{proof}

We emphasize that part (5) in Lemma \ref{lemm.ranknulltk} is the reason why the Sylvester matrices $S_k$ are not an adequate tool to study generic properties in $\FF[\la]^{m \times (m+n)}_{\underline{d}}$.

\section{Minimal bases in $\FF[\la]^{m \times (m+n)}_{\underline{d}}$}
\label{Sec:minbases-from-Trimmed}
This section characterizes the minimal bases in the linear space $\FF[\la]^{m \times (m+n)}_{\underline{d}}$ of $m \times (m+n)$ polynomial matrices whose row degrees are smaller than or equal to the elements of the list $\underline{d} = (d_1, d_2, \ldots , d_m)$ in terms of their trimmed Sylvester matrices. Corollary \ref{cor2:Mdfull} is the main result in this section, while the remaining results are just some of the results in \cite[Section 3]{VD} written in terms of the new trimmed Sylvester matrices instead of the Sylvester matrices. Thus, they follow immediately from Lemma \ref{lemm.ranknulltk}-(1)-(2) and the corresponding results in \cite{VD}, and their proofs are omitted. We include these results since they are used in this paper and also to emphasize that the concept of trimmed Sylvester matrices is more natural in this context since they have less rows than the corresponding Sylvester matrices, and, in fact, they can have much less rows if the elements of $\underline{d} = (d_1, d_2, \ldots , d_m)$ are highly unbalanced (think, for instance, in $m=4$, $\underline{d} = (1, 1, 1,10^4)$, $d = 10^4$).

Theorem \ref{thm.summaryprevpaper} follows from Theorem 3.3 and Corollaries 3.4 and 3.5 in \cite{VD}. Recall also that  Theorem 3.3 and Corollary 3.4 in \cite{VD} were originally proved in \cite{BKAK}.

\begin{theorem} \label{thm.summaryprevpaper} Let $M(\la) \in \FF[\la]^{m \times (m+n)}_{\underline{d}}$ be a polynomial matrix of full row rank, where $\underline{d} = (d_1, d_2, \ldots , d_m)$, let $T_k$ be the {\rm $k$th} trimmed Sylvester matrix of $M (\la)$, let $r_k$ and $n_k$ be the rank and the right nullity of $T_k$, respectively, and let $\alpha_k$ be the number of right minimal indices of $M(\la)$ equal to $k$, for $k = 1, 2, \ldots$. Then, the following statements hold.
\begin{itemize}
\item[\rm (a)] $\alpha_0=m+n-r_1=n_1$ and
\begin{equation} \label{recur}
\alpha_k = (n_{k+1}-n_k)-  (n_k-n_{k-1}) =  (r_k-r_{k-1})-(r_{k+1}-r_k), \quad k=1,2, \ldots ,
\end{equation}
where $r_0$ and $n_0$ are defined as $r_0=n_0=0$.

\item[\rm (b)] If $d'$ is the smallest index $k$ for which
\begin{equation}\label{eq:max}
    n_{k+1} - n_{k} = n, \quad \mathrm{or \;\; equivalently} \quad r_{k+1} - r_{k} = m,
 \end{equation}
 then $d'$ is the maximum right minimal index of $M(\la)$ or, equivalently, the maximum column degree of any minimal basis of the rational right null space of $M(\la)$. Moreover, for all $k$ larger than $d'$, the equalities \eqref{eq:max} still hold.

\item[\rm (c)] If $d'$ is defined as in {\rm (b)} and $\varepsilon_1 , \ldots , \varepsilon_n$ are the right minimal indices of $M(\la)$, then
\begin{equation}\label{dsum}
      \sum_{j=1}^n \varepsilon_j = \sum_{k=1}^{d'} k\alpha_k = \sum_{k=1}^{d'} k(n_{k-1}-2 n_k + n_{k+1})  = n \cdot d' - n_{d'}
= r_{d'} - m \cdot d'.
 \end{equation}
\end{itemize}
\end{theorem}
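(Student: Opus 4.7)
The plan is to reduce everything to the Sylvester-matrix version of the statement, which is Theorem~3.3 together with Corollaries~3.4 and 3.5 of \cite{VD}, by invoking parts (1) and (2) of Lemma \ref{lemm.ranknulltk}. The trimmed Sylvester matrix $T_k$ is obtained from the Sylvester matrix $S_k$ simply by deleting rows that are identically zero on all of $\FF[\la]^{m\times(m+n)}_{\underline{d}}$, an operation that cannot change the column space in any essential way. Consequently, if we write $\widetilde r_k$ and $\widetilde n_k$ for the rank and right nullity of $S_k$, parts (1) and (2) of Lemma \ref{lemm.ranknulltk} give $r_k=\widetilde r_k$ and $n_k=\widetilde n_k$ for every $k$.

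Given this identification, each of the three statements follows verbatim from the corresponding result for the untrimmed Sylvester matrices. For (a), the formula $\alpha_0=m+n-\widetilde r_1=\widetilde n_1$ and the second-difference recurrence \eqref{recur} in terms of $\widetilde n_k$ or $\widetilde r_k$ are exactly the content of \cite[Thm.~3.3]{VD}; substituting $r_k=\widetilde r_k$ and $n_k=\widetilde n_k$ yields (a). For (b), the characterization of the largest right minimal index $d'$ as the smallest $k$ for which either of the equivalent jumps $\widetilde n_{k+1}-\widetilde n_k=n$ or $\widetilde r_{k+1}-\widetilde r_k=m$ occurs, and the persistence of these equalities for all larger $k$, is \cite[Cor.~3.4]{VD}; the same identification gives the trimmed version. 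Likewise, (c) is a straightforward consequence of (a) and (b): the first equality in \eqref{dsum} is the definition of the sum, the second is the recurrence in (a), the third is telescoping together with $\widetilde n_{d'+1}-\widetilde n_{d'}=n$ from (b), and the last uses the size identity $\widetilde r_k+\widetilde n_k=k(m+n)$ (equivalently $r_k+n_k=k(m+n)$, since the column count of $T_k$ and $S_k$ is the same). This is the content of \cite[Cor.~3.5]{VD}.

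There is essentially no obstacle, since all nontrivial work was already done in \cite{VD}; the only point that must be verified carefully is that the column counts of $T_k$ and $S_k$ agree, so that the right nullities we compute are genuinely the dimensions of the same right null space and the identity $r_k+n_k=k(m+n)$ transfers correctly. This is immediate from Definition \ref{def.trimSylmatrices}, because trimming removes only rows, never columns, so the right null spaces of $T_k$ and $S_k$ coincide as subspaces of $\FF^{k(m+n)}$. With this observation in place, the proof reduces to a one-line citation of the three results in \cite{VD} combined with Lemma \ref{lemm.ranknulltk}-(1)-(2), and no further argument is required.
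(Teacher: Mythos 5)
Your proposal is correct and is exactly the argument the paper intends: the text states that Theorem \ref{thm.summaryprevpaper} ``follows from Theorem 3.3 and Corollaries 3.4 and 3.5 in [VD]'' via Lemma \ref{lemm.ranknulltk}-(1)-(2), with the proof omitted. Your additional observation that $T_k$ and $S_k$ have the same number of columns (so their right null spaces coincide as subspaces of $\FF^{k(m+n)}$ and the identity $r_k+n_k=k(m+n)$ transfers) is the right point to check and is correct.
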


We can summarize all the relations in Theorem \ref{thm.summaryprevpaper} as follows:
$$  n_0=0, \quad r_k+n_k= k(m+n), \; 0 \le k
$$
and
\begin{equation}\label{constraint}  \left[ \begin{array}{c} \alpha_0 \\ \alpha_1 \\\alpha_2 \\ \vdots \\ \alpha_{d'}  \end{array} \right]   =
 \left[ \begin{array}{cccccc} 1 \\ -2 & 1 \\ 1 & -2 & 1 \\ & \ddots & \ddots & \ddots \\  & & 1 & -2 & 1 \end{array} \right]
  \left[ \begin{array}{c} n_1 \\ n_2 \\ \vdots \\ n_{d'} \\  n_{d'+1} \end{array} \right], \quad \sum_{k=1}^{d'} k\alpha_k =  \sum_{j=1}^n \varepsilon_j,
 \quad  \sum_{k=0}^{d'} \alpha_k =n,
\end{equation}
where the numbers $\alpha_i$'s must be nonnegative integers. Observe that the matrix in the first equality in \eqref{constraint} is invertible, which reveals that the values $\alpha_0, \alpha_1, \ldots, \alpha_{d'}$ and $\alpha_k=0$ for $k > d'$ determine uniquely the sequence $n_1, n_2, \ldots $, since $n_k = 2 n_{k-1} - n_{k-2}$ for $k > d' +1$ from \eqref{recur}, and, so, also determine uniquely the sequence $r_1, r_2, \ldots$ through the constraint $r_k + n_k = k (m+n)$.

The next theorem is \cite[Theorem 3.7]{VD} expressed in terms of trimmed Sylvester matrices for matrices in $\FF[\la]^{m\times (m+n)}_{\underline{d}}$. It provides a first characterization of minimal bases in terms of trimmed Sylvester matrices.

\begin{theorem} \label{th:finitenumbranks}
Let $M(\la) \in \FF[\la]^{m\times (m+n)}_{\underline{d}}$, where $\underline{d} = (d_1, d_2, \ldots , d_m)$, let $\widetilde{d}_i \leq d_i, i=1,\ldots, m$, be the row degrees of $M(\la)$, and let $M_{hr}$ be its highest-row-degree coefficient matrix. Let $T_k$ be the trimmed Sylvester matrices of $M(\la)$ for $k=1,2,\ldots$, and let $r_k$ and $n_k$ be the rank and the right nullity of $T_k$, respectively. Let $d'$ be the smallest index $k$ for which $n_{k+1}=n_k+n$, or equivalently, $r_{k+1}=r_k+m$. Then $M(\la)$ is a minimal basis if and only if the following rank conditions are satisfied
 \begin{equation}\label{eq:minimal}
    \rank ( M_{hr} ) = m \quad \mbox{and} \quad  r_{d'} - m \cdot d' = \sum_{i=1}^m \widetilde{d}_i  \,.
 \end{equation}
 \end{theorem}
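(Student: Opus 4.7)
The plan is to reduce the claim to \cite[Theorem 3.7]{VD} via Lemma \ref{lemm.ranknulltk}(1)--(2), and, for the sake of clarity, to sketch how the two standard ingredients of the latter proof adapt to the trimmed setting. Since $T_k(M)$ is obtained from $S_k(M)$ by deleting rows that are identically zero on $\FF[\la]^{m\times(m+n)}_{\underline{d}}$, the rank $r_k$ and the right nullity $n_k$ appearing in this statement coincide with the analogous quantities computed from $S_k(M)$; the index $d'$ is therefore also the same as in \cite{VD}. Hence the characterization can be transferred verbatim. The two rank conditions in \eqref{eq:minimal} encode separately the two clauses of Theorem \ref{minbasis_th}: $\rank(M_{hr}) = m$ is precisely row-reducedness, and $r_{d'} - m\cdot d' = \sum_i \widetilde{d}_i$ is the arithmetic criterion that forces $M(\la_0)$ to have full row rank at every $\la_0\in\overline{\FF}$.

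For the necessity direction I would argue as follows. If $M(\la)$ is a minimal basis, Theorem \ref{minbasis_th} gives $\rank(M_{hr}) = m$. Let $\varepsilon_1,\ldots,\varepsilon_n$ denote its right minimal indices. On the one hand, Theorem \ref{thm.summaryprevpaper}(c) delivers $\sum_j \varepsilon_j = r_{d'} - m\cdot d'$. On the other hand, Theorem \ref{thm:dualbasis} produces a dual minimal basis $N(\la)$ whose row degrees are exactly $\varepsilon_1,\ldots,\varepsilon_n$, and moreover yields $\sum_j \varepsilon_j = \sum_i \widetilde{d}_i$ since the row degrees of the minimal basis $M(\la)$ are the $\widetilde{d}_i$. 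Chaining the two equalities gives the second condition in \eqref{eq:minimal}.

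For the sufficiency direction, the first condition $\rank(M_{hr}) = m$ gives row-reducedness, which in particular implies that $M(\la)$ has full (normal) row rank. By Theorem \ref{thm.summaryprevpaper}(c) the hypothesis $r_{d'}-m\cdot d' = \sum_i \widetilde{d}_i$ translates to $\sum_j \varepsilon_j = \sum_i \widetilde{d}_i$. I would then invoke the classical index/degree identity for full-row-rank row-reduced polynomial matrices, namely
\[
\sum_{i=1}^m \widetilde{d}_i \;=\; \deg\bigl(\gcd \text{ of the } m\times m \text{ minors of } M(\la)\bigr) \,+\, \sum_{j=1}^n \varepsilon_j,
\]
which is the row-reduced analogue of the predictable-degree property and is the argument invoked in \cite[Theorem 3.7]{VD}. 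The equality above combined with $\sum_j\varepsilon_j = \sum_i \widetilde{d}_i$ forces this gcd to be a nonzero constant, so $M(\la_0)$ never drops row rank at any finite $\la_0\in\overline{\FF}$. Together with row-reducedness this is precisely the characterization in Theorem \ref{minbasis_th}, and we conclude that $M(\la)$ is a minimal basis.

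The only nontrivial point is the degree identity used in sufficiency; but this is exactly the step in \cite[Theorem 3.7]{VD}, which is why the authors emphasize that the present statement differs from the one there only by the substitution of $T_k$ for $S_k$, a substitution whose validity is guaranteed by Lemma \ref{lemm.ranknulltk}(1). Consequently, the proof can either be carried out verbatim in the trimmed setting, or, more economically, deduced directly from \cite[Theorem 3.7]{VD} by noting that $\rank(T_k) = \rank(S_k)$ and therefore $d'$, $r_{d'}$, and $n_{d'}$ are unchanged.
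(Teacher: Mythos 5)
Your proposal is correct and matches the paper's approach exactly: the paper omits the proof, stating that the theorem is just \cite[Theorem 3.7]{VD} rewritten for trimmed Sylvester matrices and that it ``follows immediately from Lemma \ref{lemm.ranknulltk}-(1)-(2)'', which is precisely your reduction via $\rank(T_k)=\rank(S_k)$ and the invariance of $r_k$, $n_k$, and $d'$. Your additional sketch of the necessity and sufficiency arguments (via Theorems \ref{thm.summaryprevpaper}(c), \ref{thm:dualbasis}, and the minor-degree identity) is a sound reconstruction of the underlying argument from \cite{VD}, though the paper itself does not reproduce it.
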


As a consequence of Theorem \ref{th:finitenumbranks}, we prove in  Corollary \ref{cor2:Mdfull} the main result of this section that characterizes the minimal bases in $\FF[\la]^{m\times (m+n)}_{\underline{d}}$ with full row rank leading row-wise coefficient matrices $M_{\underline{d}}$ as those polynomial matrices of $\FF[\la]^{m\times (m+n)}_{\underline{d}}$ with a full row rank trimmed Sylvester matrix. Since the condition $\rank (M_{\underline{d}})= m$ is clearly generic in $\FF[\la]^{m\times (m+n)}_{\underline{d}}$, Corollary \ref{cor2:Mdfull} is key in the next sections and, therefore, we include its proof. The proof is similar to that of \cite[Corollary 3.9]{VD}, although there are also some differences related to the use of $M_{\underline{d}}$ instead of $M_d$.

\begin{corollary} \label{cor2:Mdfull}
Let $M(\la) \in \FF[\la]^{m \times (m+n)}_{\underline{d}}$,  where $\underline{d} = (d_1, d_2, \ldots , d_m)$, let $M_{\underline{d}}$ be its leading row-wise coefficient matrix introduced in Definition \ref{def.Mbard}, and let $T_k$ be the trimmed Sylvester matrices of $M(\la)$ for $k=1,2,\ldots$. Then, $M(\la)$ is a minimal basis with $\rank (M_{\underline{d}})= m$ if and only if there exists an index $k$ such that $T_k$ has full row rank. In this case, if $d'$ is the smallest index $k$ for which $T_k$ has full row rank, then $d'$ is the largest right minimal index of $M(\la)$. Moreover, all trimmed Sylvester matrices $T_k$ with $k>d'$ have then also full row rank.
\end{corollary}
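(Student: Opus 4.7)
The plan is to deduce this corollary from Theorem \ref{th:finitenumbranks} together with parts (6) and (7) of Lemma \ref{lemm.ranknulltk}, using throughout the crucial bookkeeping that $T_k \in \FF^{(km+\sum_i d_i)\times k(m+n)}$, so that $T_k$ has full row rank precisely when $r_k = km + \sum_{i=1}^m d_i$. Lemma \ref{lemm.ranknulltk}(7) converts the full-row-rank hypothesis on some $T_k$ into the conclusion $\rank(M_{\underline{d}})=m$, $M_{\underline{d}}=M_{hr}$, and $\widetilde{d}_i=d_i$ for all $i$, bridging the assumptions of the corollary with those of Theorem \ref{th:finitenumbranks}. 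Conversely, Lemma \ref{lemm.coeffmatrix}(b) gives the reverse bridge.

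For the $(\Leftarrow)$ direction, assume $T_k$ has full row rank for some $k$. I would first invoke Lemma \ref{lemm.ranknulltk}(7) to obtain $\rank(M_{hr})=m$ and $\widetilde{d}_i=d_i$, and then apply Lemma \ref{lemm.ranknulltk}(6) to conclude that $T_\ell$ has full row rank for every $\ell\ge k$, i.e.\ $r_\ell=\ell m+\sum_i d_i$ and so $r_{\ell+1}-r_\ell=m$. Hence the index $d^\ast$ of Theorem \ref{thm.summaryprevpaper}(b) exists and satisfies $d^\ast\le k$; using that $r_{\ell+1}-r_\ell=m$ for all $\ell\ge d^\ast$, I would chain backwards to get $r_{d^\ast} = r_k - (k-d^\ast)m = d^\ast m+\sum_i d_i$, so that $r_{d^\ast}-m\cdot d^\ast=\sum_i d_i=\sum_i\widetilde{d}_i$. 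Theorem \ref{th:finitenumbranks} then yields that $M(\la)$ is a minimal basis.

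For the $(\Rightarrow)$ direction, assume $M(\la)$ is a minimal basis with $\rank(M_{\underline{d}})=m$. Lemma \ref{lemm.coeffmatrix}(b) gives $M_{\underline{d}}=M_{hr}$ and the row degrees are exactly $\widetilde{d}_i=d_i$. Then $\rank(M_{hr})=m$, so by Theorem \ref{th:finitenumbranks} we have $r_{d^\ast}-m\cdot d^\ast=\sum_i \widetilde{d}_i=\sum_i d_i$, i.e.\ $r_{d^\ast}=d^\ast m+\sum_i d_i$, which is exactly the number of rows of $T_{d^\ast}$. Thus $T_{d^\ast}$ has full row rank, proving the existence of the desired $k$.

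For the ``moreover'' part, the two directions above show simultaneously that $d'\le d^\ast$ (since $T_{d^\ast}$ has full row rank, the smallest such index is at most $d^\ast$) and $d^\ast\le d'$ (from the $(\Leftarrow)$ argument applied with $k=d'$), hence $d'=d^\ast$, the largest right minimal index of $M(\la)$. The final assertion that every $T_k$ with $k>d'$ has full row rank is immediate from Lemma \ref{lemm.ranknulltk}(6). The only step that requires any care is the chain-back computation showing $r_{d^\ast}=d^\ast m+\sum_i d_i$ in the $(\Leftarrow)$ direction, but this is a short combinatorial consequence of Theorem \ref{thm.summaryprevpaper}(b); no new ideas beyond the tools already assembled are needed.
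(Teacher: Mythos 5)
Your proposal is correct and follows essentially the same route as the paper's proof: both directions hinge on Theorem \ref{th:finitenumbranks}, with Lemma \ref{lemm.ranknulltk}(6)--(7) and Lemma \ref{lemm.coeffmatrix} supplying the bridges between full row rank of some $T_k$ and the hypotheses of that theorem, and the count of rows of $T_{d'}$ doing the rest. The only (valid) deviation is in identifying the smallest full-row-rank index with $d'$: the paper argues via the concavity $r_j - r_{j-1} \ge r_{j+1} - r_j$ coming from $\alpha_j \ge 0$ in Theorem \ref{thm.summaryprevpaper}(a), whereas you chain backwards through the ``moreover'' clause of Theorem \ref{thm.summaryprevpaper}(b) and then combine the two implications to get $d' = d^{\ast}$.
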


\begin{proof}
If $M(\la)$ is a minimal basis with $\rank (M_{\underline{d}})= m$, then the row degrees of $M(\la)$ are precisely $d_1, d_2, \ldots , d_m$ by Lemma \ref{lemm.coeffmatrix}, and Theorem \ref{th:finitenumbranks} implies that $r_{d'} = \sum_{i=1}^m d_i + m d'$, which is the number of rows of $T_{d'}$ according to Definition \ref{def.trimSylmatrices}. Therefore $T_{d'}$ has full row rank. Then Lemma \ref{lemm.ranknulltk}-(6) implies that all $T_k$ for $k>d'$ have also full row rank.

Conversely, if there exists an index $k$ such that $T_k$ has full row rank, then  $M_{\underline{d}} = M_{hr}$, $\rank (M_{hr}) = m$, and the row degrees of $M(\la)$ are precisely $d_1,\ldots , d_m$ by Lemma \ref{lemm.ranknulltk}-(7). This also implies that $M(\la)$ has full row normal rank (recall Remark \ref{rem:NEWconvention}). Let $k_0$ be the smallest index $k$ such that $T_k$ has full row rank and denote by $r_k$ the rank of any trimmed Sylvester matrix $T_k$. Then, according to Lemma \ref{lemm.ranknulltk}-(6), $T_{k_0 + 1}$ also has full row rank and their ranks satisfy
\begin{equation} \label{eq:innercor2}
r_{k_0 + 1} - r_{k_0} = m,
\end{equation}
taking into account which are the number of rows of $T_{k_0}$ and $T_{k_0 + 1}$.
However, $r_{k_0-1} < (k_0 -1)m +\sum_{i=1}^m d_i$, because $T_{k_0-1}$ has not full row rank. Therefore, $r_{k_0} - r_{k_0 - 1} > m$ and, so, $r_{k +1} - r_{k} > m$ for all $k \leq k_0 -1$, since Theorem \ref{thm.summaryprevpaper}-(a) implies $r_{j} - r_{j-1} \geq r_{j +1} - r_{j}$ for all $j\geq 1$ because $\alpha_j \geq 0$. Therefore, $k_0$ is the smallest index $k$ such that $r_{k + 1} = r_{k} + m$, that is, $k_0 = d'$ in Theorem \ref{th:finitenumbranks} and $r_{d'} = m\;d' +\sum_{i=1}^m d_i$, since $T_{k_0} = T_{d'}$ has full row rank. Theorem \ref{th:finitenumbranks} then implies that $M(\la)$ is a minimal basis and Theorem \ref{thm.summaryprevpaper}-(b) that $k_0 = d'$ is the largest right minimal index of $M(\la)$. The fact that all $T_k$ have full row rank for $k>d'$ is again a consequence of Lemma \ref{lemm.ranknulltk}-(6).
\end{proof}

In order to illustrate Corollary \ref{cor2:Mdfull}, we revisit Example \ref{ex:1}.

\begin{example} \label{ex:1bis} Let  $m=4$, $n=3$, $\underline{d} = (0,1,1,2)$, and let $M(\la)\in  \FF[\la]^{4\times 7}_{\underline{d}}$ be the matrix in Example \ref{ex:1}. For the purpose of comparison, we consider also the following polynomial matrix $N(\la)\in  \FF[\la]^{3\times 7}$:
$$M(\la) = \left[ \begin{array}{cc|ccc|cc} 1 & 0 & & & &\\ \hline & & -1 & \la & 0 & & \\ & & 0 & -1 & \la & & \\ \hline & & & & & -1 & \la^2 \end{array} \right] ,
\quad N(\la) = \left[ \begin{array}{cc|ccc|cc} 0 & 1 & & & & \\  \hline & & \la^2 & \la & 1 \\ \hline  & & & & & \la^2 & 1 \end{array} \right] .
$$
Then, clearly $M(\la)$ and $N(\la)$ are minimal bases by Theorem \ref{minbasis_th} and $M(\la)N(\la)^T=0$. Therefore, they are dual minimal bases and the right minimal indices of $M(\la)$ are $0,2,2$. Let us deduce these properties from the results in this section. Let $T_1, T_2$, and $T_3$ be the first three trimmed Sylvester matrices of $M(\la)$. As commented in Example \ref{ex:1}, $T_2$ has full row rank, therefore Corollary \ref{cor2:Mdfull} implies that $M(\la)$ is a minimal basis. Moreover, $T_1$ has more rows than columns (see Example \ref{ex:1}) and, so, it does not have full row rank. In fact, from Example \ref{ex:1}, it is obvious that $\rank (T_1) =6$. Then, it follows from Corollary \ref{cor2:Mdfull} that $d'=2$ is the highest right minimal index of $M(\la)$, as well as that $T_3$ has full row rank. Therefore, taking into account which are the number of rows of the trimmed Sylvester matrices described in Definition \ref{def.trimSylmatrices}, the ranks $r_1, r_2, r_3$ of $T_1, T_2, T_3$ are,
$$  r_1=6, r_2=12, r_3=16,
$$
and \eqref{recur} gives $(\alpha_0,\alpha_1,\alpha_2)= (1,0,2)$ (which agrees with the row degrees of $N(\la)$).
\end{example}

 \section{Full-trimmed-Sylvester-rank polynomial matrices}
 \label{Sec:FullRank}
Section 4 in \cite{VD} characterized those matrices in $\FF[\la]^{m \times (m+n)}_d$, i.e., in the linear space of $m \times (m+n)$ polynomial matrices with degree at most $d$, whose Sylvester matrices all have full rank. Following a similar approach, in this section we characterize the polynomial matrices in $\FF[\la]^{m \times (m+n)}_{\underline{d}}$, i.e., in the linear space of $m \times (m+n)$ polynomial matrices with row degrees bounded by the entries of $\underline{d}$, whose {\em trimmed Sylvester matrices}, introduced in Definition \ref{def.trimSylmatrices}, all have full rank.  We will show that such matrices share many of the properties of the full-Sylvester-rank polynomial matrices studied in \cite[Section 4]{VD}. For instance, that they are always minimal bases, that their right minimal indices are ``almost homogeneous'', i.e., these indices differ at most by one, although their values are (very) different from those of the matrices in \cite[Section 4]{VD}.
We emphasize that this ``almost homogeneous'' property happens for any list $\underline{d}$, independently of how different are the entries of $\underline{d}$, i.e., independently of how different are the generic row degrees in $\FF[\la]^{m \times (m+n)}_{\underline{d}}$. Since the proofs in this section are very similar to those in \cite[Section 4]{VD}, most of them are omitted and only some comments on the differences are provided.

Definition \ref{def.fullsylrank} introduces the class of polynomial matrices that is studied in the rest of the paper.
\begin{definition} \label{def.fullsylrank} Let $M(\la) \in \FF[\la]^{m \times (m+n)}_{\underline{d}}$, where $\underline{d} = (d_1, d_2, \ldots, d_m)$, be a polynomial matrix with row degrees at most $d_1, d_2, \hdots , d_m$, let  $T_k$  for $k=1,2,\hdots$ be the trimmed Sylvester matrices of $M(\la)$, and let $r_k$ be their ranks. The polynomial matrix $M(\la)$ is said to have full-trimmed-Sylvester-rank if all the matrices $T_k$ have full rank, i.e., if $r_k = \min\{(km+\sum_{i=1}^m d_i) \, , \, k(m+n)\}$ for $k=1,2,\ldots$.
\end{definition}

The rank properties described in Lemma \ref{lemm.ranknulltk}-(4)-(6) imply that it is necessary and sufficient to check at most two ranks for determining whether a polynomial matrix has full-trimmed-Sylvester-rank or not. This is stated in Lemma \ref{lemm.two-ranks}. The proof of this lemma is very similar to that of Lemma 4.2 in \cite{VD} and, therefore, is omitted. The only difference is that the numbers of rows of the trimmed Sylvester matrices described in Definition \ref{def.trimSylmatrices} are different from those of the Sylvester matrices appearing in \cite[Lemma 4.2]{VD}, which results in a different value for the integer $k'$ in \eqref{eq.ktprime} since it depends on what could be called the {\em maximum total degree} (i.e., the maximum value of the sum of the row degrees) which was $dm$ in \cite{VD} for matrices in $\FF[\la]^{m \times (m+n)}_d$ and is now replaced by $\sum_{i=1}^m d_i$ for matrices in $\FF[\la]^{m \times (m+n)}_{\underline{d}}$.
Note that in Lemma \ref{lemm.two-ranks}, as well as in the rest of this paper, the {\em ceiling} function of a real number $x$ is often used and is denoted by $\lceil x \rceil$. Recall that $\lceil x \rceil$ is the smallest integer that is larger than or equal to $x$.

\begin{lemma} \label{lemm.two-ranks}Let $M(\la) \in \FF[\la]^{m \times (m+n)}_{\underline{d}}$, where $\underline{d} = (d_1, d_2, \ldots, d_m)$, let $T_k$ for $k=1,2,\ldots$ be the trimmed Sylvester matrices of $M(\la)$, and let
\begin{equation} \label{eq.ktprime}
k' := \left\lceil \frac{\sum_{i=1}^m d_i}{n} \right\rceil \quad \mbox{and} \quad n k' = \sum_{i=1}^m d_i + t, \quad \mbox{where $0\leq t < n$.}
\end{equation}
Then the following statements hold.
\begin{itemize}
\item[\rm (a)] $k'$ is the smallest index $k$ for which the number of columns of $T_k$ is larger than or equal to the number of rows of $T_k$.
\item[\rm (b)] If $k' > 1$ and $t>0$, then $M(\la)$ has full-trimmed-Sylvester-rank if and only if $T_{k'-1}$ has full column rank and $T_{k'}$ has full row rank.
\item[\rm (c)] If $k' = 1$ or $t=0$, then $M(\la)$ has full-trimmed-Sylvester-rank if and only if $T_{k'}$ has full row rank.
\end{itemize}
\end{lemma}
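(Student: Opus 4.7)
The plan is to reduce everything to a size comparison of $T_k$ and then to propagate the rank conditions via the nesting results of Lemma \ref{lemm.ranknulltk}. By Definition \ref{def.trimSylmatrices}, $T_k$ has $km + \sum_{i=1}^m d_i$ rows and $k(m+n)$ columns, so the number of columns is at least the number of rows if and only if $kn \geq \sum_{i=1}^m d_i$, equivalently $k \geq \lceil \sum_{i=1}^m d_i / n\rceil = k'$. This gives part (a) immediately. The same arithmetic shows that at $k=k'$ the matrix $T_{k'}$ has exactly $t$ more columns than rows (so it is strictly wide when $t>0$ and square when $t=0$), and that at $k=k'-1$ (when $k'>1$) the matrix $T_{k'-1}$ has $n - t > 0$ more rows than columns, so it is strictly tall.

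For part (b), the ``only if'' direction is trivial since full-trimmed-Sylvester-rank means every $T_k$ has full rank. For the ``if'' direction, assume $T_{k'-1}$ has full column rank and $T_{k'}$ has full row rank. By Lemma \ref{lemm.ranknulltk}-(4), $T_\ell$ has full column rank for every $1 \leq \ell \leq k'-1$; since by part (a) each such $T_\ell$ has at least as many rows as columns, full column rank is the maximal rank. By Lemma \ref{lemm.ranknulltk}-(6), $T_\ell$ has full row rank for every $\ell \geq k'$; since by part (a) each such $T_\ell$ has at least as many columns as rows, full row rank is again the maximal rank. Combining the two ranges shows every $T_k$ for $k \geq 1$ has full rank.

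For part (c), again the ``only if'' direction is trivial. If $k'=1$, then by part (a) $T_1$ has at least as many columns as rows, so $T_1$ having full row rank is equivalent to full rank; applying Lemma \ref{lemm.ranknulltk}-(6) upward then yields full row (hence full) rank for every $T_\ell$ with $\ell \geq 1$. If instead $t=0$, then $T_{k'}$ is square, so full row rank is equivalent to full (and full column) rank; then Lemma \ref{lemm.ranknulltk}-(4) propagates full column rank to every $T_\ell$ with $1 \leq \ell \leq k'$, and Lemma \ref{lemm.ranknulltk}-(6) propagates full row rank to every $T_\ell$ with $\ell \geq k'$, covering all indices.

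No step presents any real obstacle; the only thing to be careful about is the bookkeeping of the three regimes $\ell < k'$, $\ell = k'$, $\ell > k'$ and which of ``full row rank'' or ``full column rank'' equals ``full rank'' in each regime, a point that is controlled entirely by the sign of $kn - \sum_{i=1}^m d_i$ computed in the proof of (a).
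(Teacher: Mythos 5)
Your proof is correct and follows exactly the route the paper intends: it omits the argument and refers to \cite[Lemma 4.2]{VD}, whose proof is precisely this combination of the row/column count of $T_k$ (giving part (a) and identifying $T_{k'-1}$ as strictly tall and $T_{k'}$ as wide or square) with the downward and upward propagation of full column and full row rank from Lemma \ref{lemm.ranknulltk}-(4) and -(6). The only cosmetic remark is that when $k'=2$ the case $\ell=k'-1=1$ in part (b) is covered by the hypothesis itself rather than by Lemma \ref{lemm.ranknulltk}-(4) (which is stated for $k>1$), but this does not affect the argument.
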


Once full-trimmed-Sylvester-rank polynomial matrices have been characterized, we establish some of their properties. First, according to Corollary \ref{cor2:Mdfull}, polynomial matrices with full-trimmed-Sylvester-rank are minimal bases whose leading row-wise coefficient matrices $M_{\underline{d}}$ have full rank and with row degrees exactly equal to $d_1, d_2, \ldots ,d_m$, as a consequence of Lemma \ref{lemm.coeffmatrix}. This is stated in Theorem \ref{thm.propsfullSylvrank}-(b). In addition, Lemma \ref{lemm.two-ranks} implies that $k'$ in \eqref{eq.ktprime} is the smallest index $k$ for which $T_k$ has full row rank for any full-trimmed-Sylvester-rank matrix. Combining this fact with Corollary \ref{cor2:Mdfull}, we obtain that $k'$ is the largest right minimal index of any full-trimmed-Sylvester-rank matrix, or, equivalently, the degree of any of its dual minimal bases. This is stated in Theorem \ref{thm.propsfullSylvrank}-(c). Finally, by definition, $M(\la)$ has full-trimmed-Sylvester rank if and only if the ranks $r_k$ of their trimmed Sylvester matrices $T_k$ are given by $r_k = \min\{(km+\sum_{i=1}^m d_i) \, , \, k(m+n)\}$. Then, the right minimal indices of any full-trimmed-Sylvester-rank polynomial matrix are fixed by the recurrence in \eqref{recur} and they are given by \eqref{eq.statfullSylrank}, as can be checked through some algebraic manipulations (the reader can find in the proof of \cite[Theorem 4.3]{VD} the details of similar manipulations). Conversely, as remarked in the paragraph just below Theorem \ref{thm.summaryprevpaper}, the right minimal indices in \eqref{eq.statfullSylrank} also determine that the ranks of the trimmed Sylvester matrices are $r_k = \min\{(km+\sum_{i=1}^m d_i) \, , \, k(m+n)\}$. This discussion leads to the characterization of full-trimmed-Sylvester-rank matrices in terms of their right minimal indices given in Theorem \ref{thm.propsfullSylvrank}-(a).

\begin{theorem} \label{thm.propsfullSylvrank} Let $M(\la) \in \FF[\la]^{m \times (m+n)}_{\underline{d}}$, where $\underline{d} = (d_1, d_2 , \ldots, d_m)$, let $\alpha_k$ be the number of right minimal indices of $M(\la)$ equal to $k$, let $k'$ and $t$ be defined as in \eqref{eq.ktprime}, and let $M_{\underline{d}}$ be the leading row-wise coefficient matrix of $M(\la)$ introduced in Definition \ref{def.Mbard}. Then the following statements hold.
\begin{itemize}
\item[\rm (a)]
$M(\la)$ has full-trimmed-Sylvester-rank if and only if the right minimal indices of $M(\la)$ are
\begin{equation} \label{eq.statfullSylrank}
\alpha_{k'-1} = t, \quad \alpha_{k'} = n-t, \quad \mbox{and} \quad  \alpha_j = 0 \; \; \mbox{for $j\notin \{k'-1,k'\}$}.
\end{equation}
\item[\rm (b)] If $M(\la)$ has full-trimmed-Sylvester-rank, then $M(\la)$ is a minimal basis with $\rank (M_{\underline{d}}) = m$, and with row degrees exactly equal to $d_1, d_2, \ldots, d_m$.
\item[\rm (c)] If $M(\la)$ has full-trimmed-Sylvester-rank, then the degree of any minimal basis dual to $M(\la)$ is $k'$.
\end{itemize}

\end{theorem}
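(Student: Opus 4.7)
The plan is to prove parts (b) and (c) first, since they follow almost immediately from the groundwork laid by Lemma \ref{lemm.two-ranks}, Corollary \ref{cor2:Mdfull}, and Lemma \ref{lemm.coeffmatrix}, and then to handle (a) by a direct computation with the recurrence \eqref{recur}.

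For (b), assuming full-trimmed-Sylvester-rank, Lemma \ref{lemm.two-ranks} guarantees in particular that $T_{k'}$ has full row rank (its row count does not exceed its column count, and the rank equals the smaller dimension by definition of full-trimmed-Sylvester-rank). Corollary \ref{cor2:Mdfull} then yields that $M(\la)$ is a minimal basis with $\rank(M_{\underline{d}}) = m$, and Lemma \ref{lemm.coeffmatrix}(b) forces the row degrees of $M(\la)$ to be exactly $d_1,\ldots,d_m$. For (c), I would note that Lemma \ref{lemm.two-ranks}(a) says $T_k$ has strictly more rows than columns for $k < k'$, so $k'$ is the smallest index at which $T_k$ can possibly achieve full row rank. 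Since full-trimmed-Sylvester-rank realizes that full row rank at $k = k'$, Corollary \ref{cor2:Mdfull} tells us $k'$ is the largest right minimal index of $M(\la)$, and by the correspondence in Theorem \ref{thm:dualbasis} this is the degree of any dual minimal basis.

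For the forward direction of (a), the argument is a direct calculation. Full-trimmed-Sylvester-rank means $r_k = \min\{km + \sum_{i=1}^m d_i,\, k(m+n)\}$, hence $n_k = k(m+n) - r_k$ equals $0$ for $k \leq k'-1$, equals $t$ at $k = k'$, and equals $kn - \sum_{i=1}^m d_i$ for $k \geq k'$. In particular $n_{k+1} - n_k = n$ for all $k \geq k'$, so by Theorem \ref{thm.summaryprevpaper}(b) the value $k'$ is indeed the maximum right minimal index. Plugging into $\alpha_k = (n_{k+1}-n_k) - (n_k - n_{k-1})$ from Theorem \ref{thm.summaryprevpaper}(a), every second difference vanishes except at $k = k'-1$, giving $\alpha_{k'-1} = t - 0 = t$, and at $k = k'$, giving $\alpha_{k'} = n - t$. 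The value $\alpha_0 = n_1$ is handled by the same formula, collapsing to zero unless $k' = 1$, in which case it coincides with the $k'-1 = 0$ case and yields $\alpha_0 = t$ correctly.

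For the converse in (a), I would invoke the observation recorded just after Theorem \ref{thm.summaryprevpaper}: the lower triangular matrix in \eqref{constraint} is invertible, so the data $(\alpha_0,\alpha_1,\ldots,\alpha_{d'})$ uniquely determines the sequence $n_1, n_2, \ldots$, and hence the ranks $r_k$ via $r_k + n_k = k(m+n)$. Substituting the prescription \eqref{eq.statfullSylrank} back into this system produces exactly the $n_k$ values computed in the forward direction, hence $r_k = \min\{km + \sum d_i,\, k(m+n)\}$, which is the definition of full-trimmed-Sylvester-rank. The only subtle bookkeeping point I anticipate is the degenerate boundary cases $t = 0$ (when $\alpha_{k'-1}$ collapses to zero and all indices concentrate at $k'$) and $k' = 1$ (when only $\alpha_0$ and $\alpha_1$ are involved); in both situations the formulas remain consistent, so the argument goes through without additional work.
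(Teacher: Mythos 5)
Your proposal is correct and follows essentially the same route as the paper: parts (b) and (c) via Lemma \ref{lemm.two-ranks}, Corollary \ref{cor2:Mdfull}, and Lemma \ref{lemm.coeffmatrix}, and part (a) by computing the nullities $n_k$ from the full-rank condition and running the recurrence \eqref{recur} forward, with the converse obtained from the invertibility remark following Theorem \ref{thm.summaryprevpaper}. The only difference is that the paper defers the algebraic manipulations for (a) to the analogous result in \cite{VD}, whereas you carry them out explicitly, including the boundary cases $k'=1$ and $t=0$.
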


If Theorem \ref{thm.propsfullSylvrank} is compared with \cite[Theorem 4.3]{VD}, then we observe that parts-(a) are different in both results since Theorem 4.3-(a) in \cite{VD} states in addition that the complete eigenstructure of full-Sylvester-rank matrices consists only of right minimal indices. In contrast, full-trimmed-Sylvester-rank matrices in $\FF[\la]^{m \times (m+n)}_{\underline{d}}$ have also infinite elementary divisors, except in the case that all the entries in $\underline{d}$ are equal. This is proved in the next theorem, where the infinite elementary divisors of $M(\la)$ are the elementary divisors associated to the zero eigenvalue of $\mbox{rev}_d \, M(\la) = \la^d \, M(1/\la)$.

\begin{theorem} \label{thm.completeeigen} Let $M(\la) \in \FF[\la]^{m \times (m+n)}_{\underline{d}}$, where $\underline{d} = (d_1, d_2 , \ldots, d_m)$, let $\alpha_k$ be the number of right minimal indices of $M(\la)$ equal to $k$, let $k'$ and $t$ be defined as in \eqref{eq.ktprime}, and let $d = \max_{1 \leq i \leq m} d_i$. Then, $M(\la)$ has full-trimmed-Sylvester-rank if and only if the complete eigenstructure of $M(\la)$ consists of
\begin{itemize}
\item[\rm (1)] the right minimal indices described by
\[
\alpha_{k'-1} = t, \quad \alpha_{k'} = n-t, \quad \mbox{and} \quad  \alpha_j = 0 \; \; \mbox{for $j\notin \{k'-1,k'\}$},
\]
\item[\rm (2)] and one infinite elementary divisor of degree $d-d_i$ for each $d_i < d$, $i=1,2, \ldots , m$.
\end{itemize}
\end{theorem}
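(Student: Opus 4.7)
The plan is to derive the right minimal indices from Theorem \ref{thm.propsfullSylvrank} and then identify the infinite elementary divisors by a direct local analysis of $\rev_d M(\la)$ at $\la = 0$, exploiting the row-wise factorization induced by the inhomogeneous row degrees.

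For the ``$\Leftarrow$'' direction, the right minimal indices in (1) coincide with those of \eqref{eq.statfullSylrank}, so Theorem \ref{thm.propsfullSylvrank}-(a) immediately gives that $M(\la)$ has full-trimmed-Sylvester-rank.

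For the ``$\Rightarrow$'' direction, assume $M(\la)$ has full-trimmed-Sylvester-rank. Part (1) follows from Theorem \ref{thm.propsfullSylvrank}-(a). Moreover, Theorem \ref{thm.propsfullSylvrank}-(b) together with Theorem \ref{minbasis_th} imply that $M(\la_0)$ has full row rank for every $\la_0 \in \overline \FF$ and that $M(\la)$ has full normal row rank, so $M(\la)$ has neither finite elementary divisors nor left minimal indices. It remains to identify the infinite elementary divisors. Writing $R_i(\la) = R_{i,0} + R_{i,1}\la + \cdots + R_{i,d_i}\la^{d_i}$ for the $i$-th row of $M(\la)$, a direct computation yields the factorization
\[
\rev_d M(\la) = D(\la)\,\widetilde{M}(\la),
\]
where $D(\la) := \diag(\la^{d-d_1}, \ldots, \la^{d-d_m})$ and $\widetilde{M}(\la)$ is the polynomial matrix whose $i$-th row is $R_{i,d_i} + R_{i,d_i-1}\la + \cdots + R_{i,0}\la^{d_i}$. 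By construction $\widetilde{M}(0) = M_{\underline{d}}$, which has full row rank $m$ by Theorem \ref{thm.propsfullSylvrank}-(b).

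Since $\widetilde{M}(0)$ has full row rank, after a column permutation $P_0$ the first $m$ columns $A(\la)$ of $\widetilde{M}(\la) P_0$ satisfy $\det A(0) \neq 0$, so $A(\la)$ is invertible over the local ring $\FF[[\la]]$. Sweeping out the remaining $n$ columns against $A(\la)$ produces a matrix $V(\la)$ invertible over $\FF[[\la]]$ such that $\widetilde{M}(\la)\, V(\la) = [\,I_m\;\;0\,]$, whence $\rev_d M(\la)\,V(\la) = [\,D(\la)\;\;0\,]$. The local Smith form at $\la = 0$ of $\rev_d M(\la)$ is therefore obtained by sorting the exponents $d-d_i$ in non-decreasing order, and its nontrivial invariant factors are exactly $\la^{d-d_i}$ for each $i$ with $d_i < d$. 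These are the infinite elementary divisors of $M(\la)$, establishing statement (2). The main subtlety lies in the local reduction of $\widetilde{M}(\la)$ to $[\,I_m\;\;0\,]$, which requires working over $\FF[[\la]]$ rather than $\FF[\la]$ and using that $A(\la)$ becomes invertible there precisely because $\rank A(0) = m$. As a sanity check, the total degrees satisfy $\sum_{d_i<d}(d-d_i) + \sum_i d_i = m\,d$, consistent with the index sum relation for a grade-$d$ polynomial matrix of row rank $m$ with no finite or left eigenstructure.
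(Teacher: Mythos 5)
Your proof is correct, and the first half (the ``$\Leftarrow$'' direction, part (1), and the absence of finite elementary divisors and left minimal indices) coincides with the paper's argument. Where you genuinely diverge is in establishing (2). The paper uses the same factorization $\rev_d M(\la)=D(\la)H(\la)$ with $H(\la)=\widetilde{M}(\la)$, but then invokes the external result \cite[Theorem 3.2]{DDM2009} that the row-wise reversal of a minimal basis is again a minimal basis; since $H(\la)$ then has no finite elementary divisors it can be completed to a unimodular matrix, which yields the \emph{global} Smith form of $\rev_d M(\la)$ as $[D(\la)\;0]$ up to permutation. You instead observe that the infinite elementary divisors are a purely local invariant at $\la=0$ of $\rev_d M(\la)$, and that the only input needed is $\widetilde{M}(0)=M_{\underline{d}}$ having full row rank (Theorem \ref{thm.propsfullSylvrank}-(b)); the reduction $\widetilde{M}(\la)V(\la)=[\,I_m\;0\,]$ with $V(\la)$ invertible over $\FF[[\la]]$ then gives the local Smith form at $0$ directly. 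Your route is more self-contained (it avoids the reversal-of-minimal-bases theorem entirely and uses a strictly weaker property of $H(\la)$), at the cost of working over the local ring; the paper's route is shorter given the cited theorem and delivers the full global Smith form of $\rev_d M(\la)$ as a byproduct. Both correctly conclude that the nontrivial elementary divisors at $0$ are $\la^{d-d_i}$ for $d_i<d$, and your index-sum sanity check is consistent.
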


\begin{proof}
It is clear that if (1) and (2) hold, then $M(\la)$ has full-trimmed-Sylvester-rank as a consequence of Theorem \ref{thm.propsfullSylvrank}-(a).

Conversely, if $M(\la)$ has full-trimmed-Sylvester-rank, then its right minimal indices are the ones described in (1) by Theorem \ref{thm.propsfullSylvrank}-(a). Moreover, $M(\la)$ is a minimal basis with row degrees exactly equal to $d_1, d_2, \ldots , d_m$ by Theorem \ref{thm.propsfullSylvrank}-(b). Therefore, $M(\la)$ has no left minimal indices, since it has full row (normal) rank, and $M(\la)$ has no finite elementary divisors by Theorem \ref{minbasis_th}. The only remaining part of the complete eigenstructure of $M(\la)$ to be determined are the infinite elementary divisors. For this purpose, let $R_i (\la)$ be the rows of $M(\la)$, for $i=1,\ldots, m$, and note that
\[
\mbox{rev}_d \, M(\la) = \left[ \begin{array}{ccc}
                        \la^{d - d_1} &  & \\
                         & \ddots &  \\
                         &  & \la^{d - d_m}
                      \end{array} \right]
                       \left[ \begin{array}{c}
                         \mbox{rev}_{d_1} \, R_1 (\la) \\
                         \vdots \\
                         \mbox{rev}_{d_m} \, R_m (\la)
                      \end{array} \right] =: D(\la) H(\la).
\]
The matrix $H(\la)$ whose rows are the reversals of the rows of $M(\la)$ is also a minimal basis by \cite[Theorem 3.2]{DDM2009} and, therefore, has no finite elementary divisors. This means that it can be extended to a unimodular matrix by adding rows  \cite[Chapter 6]{Kai80} and, so, $[D(\la) \; 0_{m \times n}]$ is, modulo a permutation of its diagonal entries, the Smith form of $\mbox{rev}_d \, M(\la)$ and those $\la^{d - d_i}$ such that $d - d_i >0$ are the infinite elementary divisors of $M(\la)$.
\end{proof}

\section{Genericity of full-trimmed-Sylvester-rank matrices in $\FF[\la]^{m \times (m+n)}_{\underline{d}}$} \label{Sec:Genericity}
In this section we extend the genericity results of \cite[Section 5]{VD} from the linear space $\FF[\la]^{m \times (m+n)}_d$ to the linear space $\FF[\la]^{m \times (m+n)}_{\underline{d}}$. The results are stated both in terms of algebraic sets and of open and dense sets with respect to a standard Euclidean metric in $\FF[\la]^{m \times (m+n)}_{\underline{d}}$. This second view was not developed in \cite{VD}, and is included here because it completes the genericity results in a natural way and because it will be applied in the future for describing the sets of polynomial matrices with bounded rank and degree from a different perspective than the one presented in \cite{dmy-dop-2017}.  The first part of this section introduces some general concepts and basic results that are needed to state the main results.

As explained in Section \ref{Sec:Preliminaries}, $\FF[\la]^{m \times (m+n)}_{\underline{d}}$, where $\underline{d} = (d_1, d_2 , \ldots , d_m)$, is a linear space of dimension $(m+n) \sum_{i=1}^m(d_i+1)$ over the field $\FF$, which we restrict here and in the rest of this paper to  $\RR$ or $\CC$. We identify $\FF[\la]^{m \times (m+n)}_{\underline{d}}$ with $\FF^{(m+n) \sum_{i=1}^m(d_i+1)}$. Such identification can be made, for instance, by mapping each polynomial matrix $M(\la) \in \FF[\la]^{m \times (m+n)}_{\underline{d}}$ with rows $R_i(\la) = R_{i,0} + R_{i,1} \la + \cdots + R_{i,d_i} \la^{d_i}$, $i=1,2, \ldots m$, into a long row vector $V(M):= [R_{1,0} \; R_{1,1} \; \cdots \; R_{1,d_1}\; \cdots \; R_{m,0} \; R_{m,1} \; \cdots \; R_{m,d_m} ] \in \FF^{(m+n) \sum_{i=1}^m (d_i+1)}$. The mapping $M(\la) \mapsto V(M)$ is clearly a linear bijection between $\FF[\la]^{m \times (m+n)}_{\underline{d}}$ and $\FF^{(m+n) \sum_{i=1}^m (d_i+1)}$. In addition, this linear mapping $V$ is a bijective isometry if $\FF^{(m+n) \sum_{i=1}^m (d_i+1)}$ is endowed with the standard Euclidean distance and $\FF[\la]^{m \times (m+n)}_{\underline{d}}$ with the following distance: for any $M(\la), \widetilde{M} (\la) \in \FF[\la]^{m \times (m+n)}_{\underline{d}}$ with rows given, respectively, by $R_i(\la) = R_{i,0} + R_{i,1} \la + \cdots + R_{i,d_i} \la^{d_i}$ and $\widetilde{R}_i(\la) = \widetilde{R}_{i,0} + \widetilde{R}_{i,1} \la + \cdots + \widetilde{R}_{i,d_i} \la^{d_i}$ for $i=1,2,\ldots, m$, the distance between $M(\la)$ and  $\widetilde{M} (\la)$ is defined as
\begin{equation}\label{eq.poldistance}
\rho (M,\widetilde{M}) := \sqrt{\sum_{i=1}^{m} \sum_{j=0}^{d_i} \|R_{i,j} - \widetilde{R}_{i,j} \|_2^2} \; ,
\end{equation}
where $\| \cdot \|_2$ is the standard Euclidean vector norm. It is obvious that $\rho (M,\widetilde{M})$ is equal to the standard Euclidean distance between the vectors $V(M)$ and $V(\widetilde{M})$ in $\FF^{(m+n) \sum_{i=1}^m (d_i+1)}$, i.e., $\rho (M,\widetilde{M}) = \|V(M) -  V(\widetilde{M}) \|_2$. The distance \eqref{eq.poldistance} can be expressed more compactly in terms of the matrix coefficients of $M(\la)$ and $\widetilde{M} (\la)$, as well as in terms of their first Sylvester and first trimmed Sylvester matrices, because if $d = \max_{1 \leq i \leq m} d_i$, $M(\la) = M_0 + M_1  \la + \cdots + M_d \la^d$, and $\widetilde{M}(\la) = \widetilde{M}_0 + \widetilde{M}_1  \la + \cdots + \widetilde{M}_d \la^d$, then
\begin{equation}\label{eq.poldistance2}
\rho (M,\widetilde{M}) = \sqrt{\sum_{i=0}^{d} \|M_i - \widetilde{M}_{i} \|_F^2} = \|S_1 (M) - S_1 (\widetilde{M})\|_F = \|T_1 (M) - T_1 (\widetilde{M})\|_F,
\end{equation}
where $\|\cdot \|_F$ is the matrix Frobenius norm \cite{stewart-sun} and the last equality holds because trimmed Sylvester matrices are obtained from Sylvester matrices by removing rows that are zero for every polynomial matrix in $\FF[\la]^{m \times (m+n)}_{\underline{d}}$. The distance \eqref{eq.poldistance} allows us to consider $\FF[\la]^{m \times (m+n)}_{\underline{d}}$ as a metric space, and, so, to define in it open and closed sets, as well as closures and any other topological concept. Moreover, the linear bijective isometry $M(\la) \mapsto V(M)$ allows us to see that such concepts are fully equivalent to the corresponding ones in the standard Euclidean metric in $\FF[\la]^{m \times (m+n)}_{\underline{d}}$.

Sometimes in this section we need to consider $\CC[\la]^{m \times (m+n)}_{\underline{d}}$ as a linear space over $\RR$ and to identify $\CC[\la]^{m \times (m+n)}_{\underline{d}}$ with $\RR^{2 (m+n) \sum_{i=1}^m(d_i+1)}$ through the linear bijection $M(\la) \mapsto W(M) := [\mbox{Re}(V(M)) \; \; \mbox{Im}(V(M))]$, where $V(M)$ is the row vector defined above for any $M(\la) \in \CC[\la]^{m \times (m+n)}_{\underline{d}}$ and $\mbox{Re}(V(M))$ and $\mbox{Im}(V(M))$ are its entry-wise real and imaginary vector parts. In addition,
note that for any $M(\la), \widetilde{M} (\la) \in \CC[\la]^{m \times (m+n)}_{\underline{d}}$, $\rho (M,\widetilde{M}) = \|V(M) -  V(\widetilde{M}) \|_2 = \|W(M) -  W(\widetilde{M}) \|_2$. Therefore, $W$ is a linear bijective isometry between
$\CC[\la]^{m \times (m+n)}_{\underline{d}}$ endowed with the distance \eqref{eq.poldistance}
and $\RR^{2 (m+n) \sum_{i=1}^m(d_i+1)}$ endowed with the standard Euclidean distance, which allows us to identify open, closed sets, and any other topological concepts, in these two metric spaces.

Next, we recall that an {\em algebraic set} in $\FF^p$ (here $\FF = \RR$ or $\FF = \CC$) is the set of common zeros of a finite number of multivariable polynomials with $p$ variables and coefficients in $\FF$, and that an algebraic set is {\em proper} if it is not the whole set $\FF^p$. With these concepts at hand, the standard definition of genericity of Algebraic Geometry is as follows: a {\em generic set of $\FF^p$ is a subset of $\FF^p$ whose complement is contained in a proper algebraic set}. This definition extends to the corresponding one of {\em generic set of $\RR[\la]^{m \times (m+n)}_{\underline{d}}$}
through the bijection $M(\la) \mapsto V(M)$, and to the corresponding one of
{\em generic set of $\CC[\la]^{m \times (m+n)}_{\underline{d}}$} through the bijection $M(\la) \mapsto W(M)$. Note that one can also define generic sets of $\CC[\la]^{m \times (m+n)}_{\underline{d}}$ through the bijection
$M(\la) \mapsto V(M)$, but this is not the definition that we need in this paper, although it may be of interest in other contexts, since for algebraic genericity purposes, we need to consider $\CC[\la]^{m \times (m+n)}_{\underline{d}}$ as a linear space over $\RR$.

Generic sets in $\FF^p$ satisfy the important property stated in Theorem \ref{thm.generopen}, which is well-known but that we have not found explicitly stated anywhere. Therefore, we include a proof that relies only on the following two very basic results of Euclidean Topology in $\FF^p$: a closed set is a set whose complement is an open set, and vice versa, and a set is closed if and only if it contains all of its limit points. It is interesting to observe that another definition of ``generic set'' in $\FF^p$, which is also often used, is that a set is generic if it contains a subset that is open and dense in $\FF^p$ with respect to the standard Euclidean topology. Therefore, Theorem \ref{thm.generopen} proves that the generic sets in the sense of Algebraic Geometry are also generic in this topological sense. However, the reverse implication is not true.

\begin{theorem} \label{thm.generopen} Let $\FF = \RR$ or $\FF = \CC$ and let $\mathcal{A}$ be a generic set of $\FF^p$. Then, there exists a subset $\mathcal{B} \subseteq \mathcal{A}$ such that $\mathcal{B}$ is open and dense in $\FF^p$ with respect to the standard Euclidean topology. Moreover, if the complement of $\mathcal{A}$ in $\FF^p$ is a proper algebraic set, then $\mathcal{A}$ itself is open and dense in $\FF^p$.
\end{theorem}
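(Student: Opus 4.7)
The plan is to reduce everything to one geometric fact: every proper algebraic set in $\FF^p$ is closed in the standard Euclidean topology and has empty interior (equivalently, its complement is open and dense). Once this is established, both claims of the theorem follow immediately. For the first claim, if $\cC$ is a proper algebraic set with $\FF^p \setminus \cA \subseteq \cC$, set $\cB := \FF^p \setminus \cC$; then $\cB \subseteq \cA$, and $\cB$ is open because $\cC$ is closed and dense because $\cC$ has empty interior. For the second claim, where $\FF^p \setminus \cA$ is itself a proper algebraic set, apply the same observation with $\cC = \FF^p \setminus \cA$, obtaining that $\cA = \cB$ is already open and dense.

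So the whole content of the theorem lies in proving the reduction lemma. Closedness is immediate: writing $\cC = \{x \in \FF^p : f_1(x) = \cdots = f_k(x) = 0\}$ with $f_i \in \FF[x_1,\ldots,x_p]$, each $f_i$ is continuous, so each $f_i^{-1}(0)$ is closed (in $\RR^p$ for $\FF = \RR$; in $\CC^p \cong \RR^{2p}$ for $\FF = \CC$), and a finite intersection of closed sets is closed. The nontrivial part is that $\cC$ has empty interior. Since $\cC$ is proper, at least one of the polynomials $f_i$ defining it is not identically zero, and $\cC \subseteq V(f_i) := f_i^{-1}(0)$, so it suffices to show that the zero set of any single nonzero polynomial has empty interior.

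I would prove this by induction on the number of variables $p$. For $p=1$, a nonzero polynomial over $\FF$ has only finitely many zeros (fundamental theorem of algebra in the complex case, degree count in the real case), so its zero set has empty interior. For the inductive step, assume the claim for $p-1$ variables and suppose for contradiction that a nonzero $f \in \FF[x_1,\ldots,x_p]$ vanishes on a nonempty open ball $B \subseteq \FF^p$. Expand $f = \sum_{i=0}^{D} g_i(x_1,\ldots,x_{p-1})\, x_p^i$ with $g_D \not\equiv 0$. For each fixed $(\widetilde{x}_1,\ldots,\widetilde{x}_{p-1})$ in the projection $\pi(B) \subseteq \FF^{p-1}$, the univariate polynomial $f(\widetilde{x}_1,\ldots,\widetilde{x}_{p-1},\,\cdot\,)$ vanishes on a nonempty open subset of $\FF$ (the corresponding one-dimensional slice of $B$), hence is identically zero, so in particular $g_D(\widetilde{x}_1,\ldots,\widetilde{x}_{p-1}) = 0$. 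Thus $g_D$ vanishes on the nonempty open set $\pi(B) \subseteq \FF^{p-1}$, contradicting the inductive hypothesis.

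The only step that requires any care is the inductive argument, but it is standard. Once the reduction lemma is in place, the two conclusions of the theorem follow in one line each, so the proof is essentially the chain: proper algebraic set $\Longrightarrow$ closed with empty interior $\Longrightarrow$ its complement is open and dense $\Longrightarrow$ supplies the desired $\cB$ (or $\cA$ itself in the second case).
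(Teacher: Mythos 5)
Your proof is correct, and the top-level reduction is the same as the paper's: both arguments take $\cB$ to be the complement of the proper algebraic set $\cC$ containing $\FF^p \setminus \cA$, and both reduce everything to showing that the complement of a proper algebraic set is open and dense. The difference is in how density (equivalently, emptiness of the interior of $\cC$) is established. The paper argues pointwise: given $z \in \cC$, it fixes a nonzero defining polynomial $p_j$ and a single variable $x_\ell$ appearing in it, restricts $p_j$ to the coordinate line through $z$ in the $x_\ell$-direction, and uses the finiteness of the roots of the resulting univariate polynomial $q(s)$ to approximate $z$ from outside $\cC$. You instead prove by induction on $p$ that the zero set of one nonzero polynomial has empty interior, slicing a hypothetical open ball of zeros and applying the inductive hypothesis to the leading coefficient $g_D$. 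Your route is slightly longer but more robust: the paper's one-coordinate restriction $q(s)=p_j(z_1,\ldots,s,\ldots,z_p)$ can be identically zero no matter which coordinate is chosen (e.g.\ $p_j=x_1x_2$ at $z=(0,0)$, where both coordinate restrictions vanish identically), a degenerate case the paper's argument does not address and which your induction handles automatically. Both proofs treat $\FF=\RR$ and $\FF=\CC$ uniformly, using only that a nonzero univariate polynomial over $\FF$ has finitely many roots; your closedness argument (preimages of $\{0\}$ under continuous maps) is an equivalent, marginally cleaner variant of the paper's sequential one.
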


\begin{proof} If $\mathcal{A} = \FF^p$, then take $\mathcal{B} = \mathcal{A}$ and the proof is finished. So, we assume throughout the rest of the proof that $\mathcal{A} \ne \FF^p$. Let $\mathcal{A}^c$ ($\ne \emptyset$) be the complement of $\mathcal{A}$ in $\FF^p$. Then, by definition of genericity, there exists a proper algebraic set $\mathcal{C}$ of $\FF^p$ such that $\mathcal{A}^c \subseteq \mathcal{C} \ne \emptyset$, or, equivalently, $\mathcal{C}^c \subseteq \mathcal{A} \ne \FF^p$. The goal is to prove that $\mathcal{C}^c$ is open and dense in $\FF^p$. Therefore, $\mathcal{C}^c$ can be taken as the set $\mathcal{B}$ mentioned in the statement. By definition, there exist some multivariable polynomials $p_1 (x_1, \ldots , x_p), \ldots , p_q (x_1, \ldots , x_p)$ such that
\[
\mathcal{C} = \{
x
\in \FF^p \, : \, p_i (x) = 0 \;\mbox{for $i=1, \ldots , q$}
\},
\]
where at least one of the polynomials $p_i$ is not identically zero because $\mathcal{C}$ is proper. Next, we prove that $\mathcal{C}$ is closed, i.e., that $\mathcal{C}^c$ is open. For this purpose, let $\{y^{(k)}\}_{k=0}^\infty \subseteq \mathcal{C}$ be a sequence such that $\lim_{k \rightarrow \infty} y^{(k)} = y$. Then, since multivariable polynomials are continuous functions and $p_i(y^{(k)}) = 0$ for all $k$ and for $i=1, \ldots , q$,
$$0 = \lim_{k \rightarrow \infty}  p_i(y^{(k)}) = p_i(\lim_{k \rightarrow \infty} y^{(k)}) = p_i(y),$$
which proves that $y \in \mathcal{C}$ and, so, that $\mathcal{C}$ is closed. Next, we prove that $\mathcal{C}^c$ is dense in $\FF^p$. Let $z \notin \mathcal{C}^c$. Then, $z \in \mathcal{C}$. Let $p_j (x)$ be one of the polynomials defining $\mathcal{C}$ that is not identically zero and let $x_\ell$ be a variable appearing in such polynomial. Then, $p_j (z) =0$. Define the univariate polynomial $q(s) =
p_j (z_1, \ldots, z_{\ell -1}, s , z_{\ell +1}, \ldots , z_p)$, where $s \in \FF$, which satisfies $q(z_\ell) = 0$. Since the number of roots of $q(s)$ is finite we can construct a sequence $\{s^{(k)}\}_{k=0}^\infty \subseteq \FF$ such that $q(s^{(k)}) \ne 0$ and $\lim_{k \rightarrow \infty} s^{(k)} = z_\ell$. Finally, we define the sequence $\{z^{(k)} = (z_1, \ldots, z_{\ell-1}, s^{(k)}  , z_{\ell+1}, \ldots , z_p)\}_{k=0}^\infty \subseteq \FF^p$, which satisfies $p_j (z^{(k)}) \ne 0$ and $\lim_{k \rightarrow \infty} z^{(k)} = z$. Thus, $\{ z^{(k)} \}_{k=0}^\infty \subset \mathcal{C}^c$ and $z$ is a limit point of $\mathcal{C}^c$. The ``moreover part'' of the theorem follows by taking in that case $\mathcal{A}^c = \mathcal{C}$.
\end{proof}

The basic concepts on ``genericity'' that have been refreshed above allow us to introduce some nomenclature. In the rest of the paper, expressions as ``generically the polynomial matrices in $\FF[\la]^{m \times (m+n)}_{\underline{d}}$ have the property $\mathcal{P}$'' have the precise meaning of ``the polynomial matrices of $\FF[\la]^{m \times (m+n)}_{\underline{d}}$ that satisfy property $\mathcal{P}$ are a  generic set of $\FF[\la]^{m \times (m+n)}_{\underline{d}}$''.

We can now state the main result of this section.

\begin{theorem} \label{thm.gensylvfullrank} Let $\mathrm{TSyl}[\la]^{m \times (m+n)}_{\underline{d}}$ be the set of polynomial matrices that have full-trimmed-Sylvester-rank in $\FF[\la]^{m \times (m+n)}_{\underline{d}}$. Then, the complement of $\mathrm{TSyl}[\la]^{m \times (m+n)}_{\underline{d}}$ is a proper algebraic set of $\FF[\la]^{m \times (m+n)}_{\underline{d}}$ and, so, $\mathrm{TSyl}[\la]^{m \times (m+n)}_{\underline{d}}$ is a generic set of $\FF[\la]^{m \times (m+n)}_{\underline{d}}$. Moreover, $\mathrm{TSyl}[\la]^{m \times (m+n)}_{\underline{d}}$ is an open and dense subset of $\FF[\la]^{m \times (m+n)}_{\underline{d}}$ with respect to the Euclidean metric defined in \eqref{eq.poldistance}.
\end{theorem}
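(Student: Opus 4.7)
The plan is to identify the complement of $\mathrm{TSyl}[\la]^{m \times (m+n)}_{\underline{d}}$ with a proper algebraic set of $\FF[\la]^{m \times (m+n)}_{\underline{d}}$, and then to invoke the ``moreover'' part of Theorem \ref{thm.generopen} for openness and density in the Euclidean topology.

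First, I would reduce the full-trimmed-Sylvester-rank property, by means of Lemma \ref{lemm.two-ranks}, to a rank condition on a finite collection of constant matrices: depending on whether $k'>1$ and $t>0$ or not, this amounts to requiring either that $T_{k'-1}$ has full column rank and $T_{k'}$ has full row rank, or just that $T_{k'}$ has full row rank. The entries of each $T_k$ depend linearly on the matrix coefficients of $M(\la)$, so each maximal-size minor of $T_k$ is a polynomial in the entries of those coefficients. The failure of full-trimmed-Sylvester-rank is the simultaneous vanishing of finitely many such minor polynomials, and hence defines an algebraic set under the identification $M(\la) \mapsto V(M)$ in the real case. In the complex case, each complex minor polynomial $p$ vanishes if and only if $\mathrm{Re}(p)=0$ and $\mathrm{Im}(p)=0$, each of which is a real polynomial in the real and imaginary parts, so the complement is also algebraic under the identification $M(\la) \mapsto W(M)$.

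Second, to show that this algebraic set is proper, I would exhibit one polynomial matrix in $\FF[\la]^{m \times (m+n)}_{\underline{d}}$ with full-trimmed-Sylvester-rank. By Theorem \ref{thm.propsfullSylvrank}-(a), such a matrix is precisely one whose right minimal indices consist of $t$ copies of $k'-1$ and $n-t$ copies of $k'$ (and no others). The sum of these indices is $t(k'-1)+(n-t)k' = nk' - t = \sum_{i=1}^m d_i$ by the definition \eqref{eq.ktprime}, so the row-degree list $(d_1, \ldots, d_m)$ and this almost-homogeneous list satisfy the compatibility \eqref{eqn.keyequality}. Then Theorem \ref{thm:dualbasis} supplies a minimal basis $M(\la) \in \FF[\la]^{m \times (m+n)}_{\underline{d}}$ with row degrees exactly $d_1, \ldots, d_m$ and with right minimal indices matching the almost-homogeneous list. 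Theorem \ref{thm.propsfullSylvrank}-(a) then guarantees that this $M(\la)$ has full-trimmed-Sylvester-rank, so the algebraic set described above is a proper subset of $\FF[\la]^{m \times (m+n)}_{\underline{d}}$.

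Finally, the openness and density claim follows directly from the ``moreover'' part of Theorem \ref{thm.generopen}, applied through the isometric identification of $\FF[\la]^{m \times (m+n)}_{\underline{d}}$ with $\FF^{(m+n)\sum_{i=1}^m(d_i+1)}$ (or with $\RR^{2(m+n)\sum_{i=1}^m(d_i+1)}$ when $\FF=\CC$) endowed with the Euclidean metric \eqref{eq.poldistance}. The most delicate point is ensuring that the algebraic-set description in the complex case is compatible with the real-vector-space identification $M \mapsto W(M)$ used to define complex genericity in this paper; this is the one spot where care is required, but it is handled routinely by splitting each complex minor into its real and imaginary parts, so I do not expect it to be a true obstacle.
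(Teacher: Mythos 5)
Your proposal is correct and follows essentially the same route as the paper: reduce to the finite rank conditions of Lemma \ref{lemm.two-ranks}, express their failure as an algebraic set under the identifications $V$ (real case) and $W$ (complex case), establish properness by constructing a full-trimmed-Sylvester-rank matrix via the converse part of Theorem \ref{thm:dualbasis} together with Theorem \ref{thm.propsfullSylvrank}-(a), and conclude openness and density from Theorem \ref{thm.generopen}. The only point worth tightening is that the failure of the two rank conditions is a \emph{union} of two vanishing loci rather than a simultaneous vanishing of the minors themselves; this union is still algebraic (take all pairwise products of maximal minors, or, as the paper does, the single equation $\det(T_{k'-1}^* T_{k'-1}) \cdot \det(T_{k'} T_{k'}^*) = 0$).
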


\begin{proof} The proof of that the complement of $\mathrm{TSyl}[\la]^{m \times (m+n)}_{\underline{d}}$ is a proper algebraic set of $\FF[\la]^{m \times (m+n)}_{\underline{d}}$ is very similar to that of Theorem 5.1 in \cite{VD} and only requires to replace the Sylvester matrices used in \cite{VD} by the trimmed Sylvester matrices. More precisely, assume that $k'$ and $t$ in \eqref{eq.ktprime} satisfy $k' >1$ and $t >0$, since the proof in other cases is similar. Then, the complement of $\mathrm{TSyl}[\la]^{m \times (m+n)}_{\underline{d}}$ is the set of matrices of $\FF[\la]^{m \times (m+n)}_{\underline{d}}$ that satisfy $\det(T_{k'-1}^* T_{k'-1}) \cdot \det(T_{k'} T_{k'}^*) = 0$, according to Lemma \ref{lemm.two-ranks}-(b). Next, the same arguments presented in \cite[Theorem 5.1]{VD} prove that this set is a proper algebraic set of $\FF[\la]^{m \times (m+n)}_{\underline{d}}$, where the proof of the ``properness'' relies on Theorem \ref{thm:dualbasis}. Once this is established, Theorem \ref{thm.generopen} can be combined with the linear bijective isometries $M(\la) \mapsto V(M)$, when $\FF = \RR$, or $M(\la) \mapsto W(M)$, when $\FF = \CC$, defined in the first part of this section to prove immediately that  $\mathrm{TSyl}[\la]^{m \times (m+n)}_{\underline{d}}$ is an open and dense subset of $\FF[\la]^{m \times (m+n)}_{\underline{d}}$ with respect to the metric in \eqref{eq.poldistance}.
\end{proof}

An immediate consequence of Theorem \ref{thm.gensylvfullrank} is that generically  the polynomial matrices in $\FF[\la]^{m \times (m+n)}_{\underline{d}}$ have all the properties satisfied by full-trimmed-Sylvester-rank matrices, in particular those established in Theorems \ref{thm.propsfullSylvrank} and \ref{thm.completeeigen}. Moreover, the fact, proved in Theorem \ref{thm.gensylvfullrank}, that $\mathrm{TSyl}[\la]^{m \times (m+n)}_{\underline{d}}$ is open and dense and the definition of these concepts allow us to state the following corollary, whose simple proof is omitted.

\begin{corollary} \label{cor:genproperties} Let $\underline{d} = (d_1, d_2 , \ldots , d_m)$,  $k'$ and $t$ be defined as in \eqref{eq.ktprime},  $\rho$ be the distance in $\FF[\la]^{m \times (m+n)}_{\underline{d}}$ defined in \eqref{eq.poldistance}, and $\mathrm{TSyl}[\la]^{m \times (m+n)}_{\underline{d}} \subset \FF[\la]^{m \times (m+n)}_{\underline{d}}$ be the set of full-trimmed-Sylvester-rank polynomial matrices. Then, the following statements hold.
\begin{itemize}
\item[\rm (a)] For every $M(\la) \in \FF[\la]^{m \times (m+n)}_{\underline{d}}$ and every $\epsilon >0$, there exists a polynomial matrix $\widetilde{M}(\la) \in \mathrm{TSyl}[\la]^{m \times (m+n)}_{\underline{d}}$ such that $\rho (M, \widetilde{M}) < \epsilon$, which, therefore, satisfies
    \begin{itemize}
      \item[\rm (a1)] $\widetilde{M}(\la)$ is a minimal basis with full row rank leading row-wise coefficient matrix, and with row degrees exactly equal to $d_1, d_2, \ldots, d_m$,
      \item[\rm (a2)]$\widetilde{M}(\la)$ has $n$ right minimal indices, $t$ of them equal to $k'-1$ and $n-t$ equal to $k'$.
    \end{itemize}
\item[\rm (b)] For every $\widetilde{M}(\la) \in \mathrm{TSyl}[\la]^{m \times (m+n)}_{\underline{d}}$, there exists a number $\epsilon >0$ such that every polynomial matrix $M(\la) \in \FF[\la]^{m \times (m+n)}_{\underline{d}}$ satisfying
    $\rho (M, \widetilde{M}) < \epsilon$ has full-trimmed-Sylvester-rank.
\end{itemize}
\end{corollary}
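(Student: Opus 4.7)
The plan is to read off both statements directly from Theorem \ref{thm.gensylvfullrank}, which already provides the two topological facts doing all the work: $\mathrm{TSyl}[\la]^{m \times (m+n)}_{\underline{d}}$ is \emph{dense} in $\FF[\la]^{m \times (m+n)}_{\underline{d}}$ with respect to the metric $\rho$ of \eqref{eq.poldistance}, and it is \emph{open} in the same metric. No new analysis is required; the only thing to do is translate these two properties into the $\epsilon$-language of the corollary and then invoke the structural results proved earlier to read off (a1) and (a2).

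For part (a), I would argue as follows. Fix $M(\la) \in \FF[\la]^{m \times (m+n)}_{\underline{d}}$ and $\epsilon > 0$. Density of $\mathrm{TSyl}[\la]^{m \times (m+n)}_{\underline{d}}$ with respect to $\rho$ means that the open $\rho$-ball of radius $\epsilon$ around $M(\la)$ must meet $\mathrm{TSyl}[\la]^{m \times (m+n)}_{\underline{d}}$; pick any $\widetilde{M}(\la)$ in this intersection. Then (a1) is immediate from Theorem \ref{thm.propsfullSylvrank}(b), which says that every full-trimmed-Sylvester-rank matrix is a minimal basis with $\rank(\widetilde{M}_{\underline{d}})=m$ and with row degrees exactly $d_1,\ldots,d_m$. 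Likewise (a2) is immediate from Theorem \ref{thm.propsfullSylvrank}(a), which determines the right minimal indices of any full-trimmed-Sylvester-rank matrix to be exactly $t$ copies of $k'-1$ and $n-t$ copies of $k'$, with $k'$ and $t$ as in \eqref{eq.ktprime}.

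For part (b), I would use openness of $\mathrm{TSyl}[\la]^{m \times (m+n)}_{\underline{d}}$ in the $\rho$-metric, which is again part of the conclusion of Theorem \ref{thm.gensylvfullrank}. Given $\widetilde{M}(\la) \in \mathrm{TSyl}[\la]^{m \times (m+n)}_{\underline{d}}$, by definition of an open set in a metric space there exists $\epsilon > 0$ such that the open $\rho$-ball of radius $\epsilon$ around $\widetilde{M}(\la)$ is contained in $\mathrm{TSyl}[\la]^{m \times (m+n)}_{\underline{d}}$, which is exactly the desired statement.

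There is really no main obstacle here, since both density and openness have already been established in Theorem \ref{thm.gensylvfullrank}; the corollary is just the metric-space reformulation of those two facts together with the structural information on full-trimmed-Sylvester-rank matrices in Theorem \ref{thm.propsfullSylvrank}. The only thing worth being careful about is pointing out that the $\rho$-topology on $\FF[\la]^{m \times (m+n)}_{\underline{d}}$ is literally the pullback of the standard Euclidean topology on $\FF^{(m+n)\sum_{i=1}^m(d_i+1)}$ (or on $\RR^{2(m+n)\sum_{i=1}^m(d_i+1)}$ in the complex case) under the isometry $V$ (respectively $W$) introduced before Theorem \ref{thm.generopen}, so that ``open and dense'' in the theorem and ``open and dense with respect to $\rho$'' in the corollary mean the same thing. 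That justifies dropping any further argument and omitting the proof.
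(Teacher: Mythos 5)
Your proposal is correct and matches exactly the argument the paper intends: the corollary is stated right after the remark that it follows from the openness and density established in Theorem \ref{thm.gensylvfullrank} together with the structural facts in Theorem \ref{thm.propsfullSylvrank}, and its "simple proof is omitted." Your $\epsilon$-ball translation of density for part (a), openness for part (b), and the appeal to Theorem \ref{thm.propsfullSylvrank}(a)--(b) for (a1)--(a2) is precisely that omitted argument.
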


Among the generic properties of the polynomial matrices in $\FF[\la]^{m \times (m+n)}_{\underline{d}}$ perhaps the most remarkable one is the ``almost homogeneity'' of their right minimal indices (recall that this means that they differ at most by one) displayed in Corollary \ref{cor:genproperties}-(a2). The surprising fact is that this generic property holds independently on how different are the generic row degrees $d_1, d_2 , \ldots , d_m$ of the matrices in $\FF[\la]^{m \times (m+n)}_{\underline{d}}$, which can be arbitrarily ``inhomogeneous''. We emphasize that the generic values of these ``almost homogeneous right minimal indices'' are fully determined by the constraint that their sum is equal to $\sum_{i=1}^{m} d_i$. The genericity of this property was proved in \cite[Section 5]{VD} only in the space of $m \times (m+n)$ polynomial matrices with degree at most $d$, which generically have all their row degrees equal to $d$.

Finally, note that, taking into account \eqref{eq.poldistance2}, the distance $\rho (M, \widetilde{M})$ in $\FF[\la]^{m \times (m+n)}_{\underline{d}}$ is equivalent to the distance
\begin{equation} \label{eq.spectraldistance}
\rho_2 (M, \widetilde{M}) := \|T_1 (M) - T_1 (\widetilde{M})\|_2,
\end{equation}
where $\|\cdot \|_2$ is the standard matrix spectral norm or maximum singular value of the considered matrix. This equivalence follows from $\|T_1 (M) - T_1 (\widetilde{M})\|_F/ \sqrt{\min\{m + \sum_{i=1}^{m} d_i , m+n\}} \leq \|T_1 (M) - T_1 (\widetilde{M})\|_2 \leq \|T_1 (M) - T_1 (\widetilde{M})\|_F$ \cite{stewart-sun}. The distance $\|T_1 (M) - T_1 (\widetilde{M})\|_2$ will be also used in some of the results of the rest of the paper since it leads to sharper bounds.

\section{Robustness of full-trimmed-Sylvester-rank matrices in $\FF[\la]^{m \times (m+n)}_{\underline{d}}$} \label{Sec:Smoothness}

Corollary \ref{cor:genproperties}-(b) establishes that every  full-trimmed-Sylvester-rank matrix $M(\la)$ is robust under perturbations in the sense that all the polynomial matrices of $\FF[\la]^{m \times (m+n)}_{\underline{d}}$ in a neighborhood of $M(\la)$ are also full-trimmed-Sylvester-rank matrices. In this section, we estimate the size of the corresponding neighborhood of robustness. In addition, we also characterize when any minimal basis $M(\la) \in \FF[\la]^{m \times (m+n)}_{\underline{d}}$, which may have not full-trimmed-Sylvester-rank, is robust under perturbations, again in the sense that all the polynomial matrices in a neighborhood of $M(\la)$ are also minimal bases. The proofs of the results in this section are omitted, since they are essentially equal to the proofs in \cite[Section 6]{VD}, with the main differences coming from replacing Sylvester matrices and their properties by trimmed Sylvester matrices and their properties.

Recall that for any polynomial matrix $P(\la) \in \FF[\la]^{m \times (m+n)}_{\underline{d}}$, its $k$th trimmed Sylvester matrix is denoted by $T_k (P)$. In this section, we use the distance $\|T_1 (P)- T_1 (\widetilde{P})\|_2 = \|T_1 (P - \widetilde{P})\|_2$ between any two polynomial matrices $P(\la), \widetilde{P} (\la) \in \FF[\la]^{m \times (m+n)}_{\underline{d}}$, which was already introduced in \eqref{eq.spectraldistance}. The singular values of any constant matrix $A\in \FF^{p \times q}$ are denoted by $\sigma_1 (A) \geq \cdots \geq \sigma_{\min\{p,q\}} (A)$, here and in the rest of the paper.
 We first prove the next simple result that follows from \cite[Lemma 6.2]{VD}.

\begin{lemma} \label{C} Let $P(\la) \in \FF[\la]^{m \times (m+n)}_{\underline{d}}$. Then the following inequalities hold for the trimmed Sylvester matrices of $P(\la)$:
$$ \| T_1 (P)\|_2 \le \|T_{k} (P)\|_2 \le \sqrt{k} \cdot \| T_1 (P)\|_2.  $$
\end{lemma}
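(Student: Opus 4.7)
The plan is to reduce Lemma~\ref{C} to the analogous estimates for the untrimmed Sylvester matrices established in \cite[Lemma 6.2]{VD}. The key observation, built into Definition~\ref{def.trimSylmatrices}, is that $T_k(P)$ is obtained from $S_k(P)$ by deleting rows that are identically zero. Since a zero row contributes nothing to $\|Ax\|_2^2 = \sum_i |(Ax)_i|^2$, removing zero rows does not alter the spectral norm; consequently $\|T_k(P)\|_2 = \|S_k(P)\|_2$ for every $k \ge 1$, and in particular $\|T_1(P)\|_2 = \|S_1(P)\|_2$. Invoking \cite[Lemma 6.2]{VD} applied to $P(\la)$ regarded as an element of the larger space $\FF[\la]^{m \times (m+n)}_d$ with $d = \max_i d_i$, which asserts $\|S_1(P)\|_2 \le \|S_k(P)\|_2 \le \sqrt{k}\,\|S_1(P)\|_2$, the desired double inequality follows at once.

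For a self-contained argument I would prove the two inequalities directly. The lower bound follows from Lemma~\ref{lemm.nesttrim}: that lemma exhibits $\bigl[\begin{smallmatrix} T_k(P) \\ 0 \end{smallmatrix}\bigr]$ as the column submatrix of the permuted matrix $P_{k+1}\,T_{k+1}(P)$ formed by its first $k(m+n)$ columns. Since the spectral norm is invariant under row permutations and is non-increasing under selection of a subset of columns, this gives $\|T_k(P)\|_2 \le \|T_{k+1}(P)\|_2$, and induction yields $\|T_1(P)\|_2 \le \|T_k(P)\|_2$. For the upper bound, I would partition $T_k(P) = [\,C_1\mid\cdots\mid C_k\,]$ into its $k$ block columns of width $m+n$, and verify that for each $j$ the nonzero rows of $C_j$ coincide, up to a permutation, with the rows of $T_1(P)$, so that $\|C_j\|_2 = \|T_1(P)\|_2$. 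Writing any unit vector as $x = [x_1;\ldots;x_k]$ with $x_j \in \FF^{m+n}$, the triangle inequality together with the Cauchy--Schwarz bound $\sum_{j=1}^{k}\|x_j\|_2 \le \sqrt{k}$ then yields
\[
\|T_k(P)\,x\|_2 \,\le\, \sum_{j=1}^{k}\|C_j\|_2\,\|x_j\|_2 \,\le\, \sqrt{k}\,\|T_1(P)\|_2.
\]

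No serious obstacle is anticipated. The one step demanding careful bookkeeping, in the self-contained version, is the identification of the nonzero rows of $C_j$ with the rows of $T_1(P)$: the entry of $C_j$ at the $i$-th row of block row $r$ equals $R_{i,r-j}$ whenever $0 \le r-j \le d_i$ and vanishes otherwise; the relevant block rows $r = j,\,j+1,\ldots,\,j+d_i$ all satisfy $r \le k+d_i$ and are therefore retained by the trimming, and, as $(i,r)$ varies, the resulting nonzero entries exhaust precisely the coefficients $R_{i,s}$ with $1 \le i \le m$ and $0 \le s \le d_i$ that make up the rows of $T_1(P)$.
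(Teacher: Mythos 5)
Your first paragraph is precisely the paper's proof: it observes that $T_k(P)$ arises from $S_k(P)$ by deleting zero rows, hence $\|T_k(P)\|_2=\|S_k(P)\|_2$, and then invokes \cite[Lemma 6.2]{VD}. The additional self-contained argument via Lemma~\ref{lemm.nesttrim} and the block-column decomposition is also correct, but it goes beyond what the paper does and is not needed.
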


\begin{proof}
Lemma 6.2 in \cite{VD} proves that $\| S_1 (P)\|_2 \le \|S_{k} (P)\|_2 \le \sqrt{k} \cdot \| S_1 (P)\|_2$ for the Sylvester matrices of $P(\la)$. Then, note that $\|T_{k} (P)\|_2 = \|S_{k} (P)\|_2$ for any $k$, because $T_{k} (P)$ is obtained from $S_{k} (P)$ by removing zero rows, which does not change the largest singular value.
\end{proof}

The next theorem proves that a minimal basis with row degrees at most $d_i,i=1,\hdots,m$, is robust inside $\FF[\la]^{m \times (m+n)}_{\underline{d}}$ if and only if its row degrees are all maximal or, equivalently, according to Lemma \ref{lemm.coeffmatrix}, if and only if its leading row-wise coefficient matrix introduced in Definition \ref{def.Mbard} has full rank. Theorem \ref{thm.smoothminbases} is the counterpart in $\FF[\la]^{m \times (m+n)}_{\underline{d}}$ of \cite[Theorem 6.3]{VD} in $\FF[\la]^{m \times (m+n)}_{d}$. The omitted proof is based on Corollary \ref{cor2:Mdfull} and Lemma \ref{C}.

\begin{theorem} \label{thm.smoothminbases} Let $M(\la) \in \FF[\la]^{m \times (m+n)}_{\underline{d}}$, where $\underline{d} = (d_1,\ldots , d_m)$, be a minimal basis and let $M_{\underline{d}}$ be its leading row-wise coefficient matrix introduced in Definition \ref{def.Mbard}. Then the following statements hold:
\begin{itemize}
\item[\rm (a)] If $\rank ( M_{\underline{d}}) < m$, then for all $\epsilon >0$ there exists a polynomial matrix $\widetilde{M}(\la)\in \FF[\la]^{m \times (m+n)}_{\underline{d}}$ that is not a minimal basis and satisfies $\|T_1 (M)- T_1 (\widetilde{M})\|_2 < \epsilon$. That is, as close as we want to $M(\la)$ there are polynomial matrices that are not minimal bases.

\item[\rm (b)] If $\rank ( M_{\underline{d}} ) = m$, then there exists an index $k$ such that $T_k (M)$ has full row rank and every polynomial matrix $\widetilde{M}(\la) \in \FF[\la]^{m \times (m+n)}_{\underline{d}}$ that satisfies
    \begin{equation} \label{eq.smoothneigh1}
    \|T_1 (M)- T_1 (\widetilde{M})\|_2 < \frac{\sigma_{km+\sum_{i=1}^m d_i} (T_{k}(M))}{\sqrt{k}}
    \end{equation}
    is a minimal basis with $\rank ( \widetilde{M}_{\underline{d}} ) =m$.  That is, all the polynomial matrices sufficiently close to $M(\la)$ are minimal bases with full rank leading row-wise coefficient matrix.
\end{itemize}
\end{theorem}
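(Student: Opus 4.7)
The proof splits naturally into the two parts, with part (b) being a direct singular value perturbation argument and part (a) requiring an explicit construction of a destabilizing perturbation.

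For part (b), the plan is to combine Corollary \ref{cor2:Mdfull} with a standard Weyl-type perturbation bound for singular values of rectangular matrices. Since $M(\la)$ is a minimal basis with $\rank(M_{\underline{d}}) = m$, Corollary \ref{cor2:Mdfull} produces an index $k$ for which $T_k(M)$ has full row rank $p := km + \sum_{i=1}^m d_i$, so its smallest singular value $\sigma_p(T_k(M))$ is strictly positive. Because the map $P \mapsto T_k(P)$ is linear, any $\widetilde{M} \in \FF[\la]^{m \times (m+n)}_{\underline{d}}$ satisfies $T_k(\widetilde{M}) = T_k(M) + T_k(\widetilde{M} - M)$. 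The Weyl bound $\sigma_p(T_k(\widetilde{M})) \ge \sigma_p(T_k(M)) - \|T_k(\widetilde{M} - M)\|_2$, together with Lemma \ref{C} applied to $\widetilde{M} - M$, yields
\[
\sigma_p(T_k(\widetilde{M})) \ge \sigma_p(T_k(M)) - \sqrt{k}\,\|T_1(\widetilde{M} - M)\|_2,
\]
which is strictly positive whenever \eqref{eq.smoothneigh1} holds. Hence $T_k(\widetilde{M})$ has full row rank, and by Corollary \ref{cor2:Mdfull} once more $\widetilde{M}$ is a minimal basis with $\rank(\widetilde{M}_{\underline{d}}) = m$.

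For part (a), the plan is to build an arbitrarily small perturbation that destroys the minimal basis property. Since $M(\la)$ is a minimal basis one has $\rank(M_{hr}) = m$, so the hypothesis $\rank(M_{\underline{d}}) < m$ forces $M_{\underline{d}} \ne M_{hr}$; by Lemma \ref{lemm.coeffmatrix}(a) this guarantees an index $i^*$ with $R_{i^*, d_{i^*}} = 0$, i.e., $\deg(R_{i^*}) < d_{i^*}$. Because $M_{hr}$ has full row rank $m$, the $m-1$ rows of $M_{hr}$ indexed by $j \ne i^*$ are linearly independent and span a proper $(m-1)$-dimensional subspace $\mathcal{U}$ of $\FF^{1\times(m+n)}$. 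Choose any nonzero $\widetilde{u} \in \mathcal{U}$ and, for $\epsilon > 0$, define $\widetilde{M}(\la) \in \FF[\la]^{m \times (m+n)}_{\underline{d}}$ by replacing only row $i^*$ of $M(\la)$ with $R_{i^*}(\la) + \epsilon\,\widetilde{u}\,\la^{d_{i^*}}$. Then $\deg(\widetilde{R}_{i^*}) = d_{i^*}$ and the $i^*$-th row of $\widetilde{M}_{hr}$ becomes $\epsilon\,\widetilde{u} \in \mathcal{U}$, while the other rows of $\widetilde{M}_{hr}$ coincide with those of $M_{hr}$; hence $\rank(\widetilde{M}_{hr}) \le m-1$, so by Theorem \ref{minbasis_th} the matrix $\widetilde{M}(\la)$ is not row reduced and therefore not a minimal basis. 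Since $M(\la) - \widetilde{M}(\la)$ has exactly one nonzero coefficient, namely $-\epsilon\,\widetilde{u}$ at degree $d_{i^*}$ of row $i^*$, the observation from Example \ref{ex:1bis} that $T_1$ simply collects all row coefficients gives $\|T_1(M) - T_1(\widetilde{M})\|_2 = \epsilon\,\|\widetilde{u}\|_2$, which can be made smaller than any prescribed positive quantity by shrinking $\epsilon$.

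The main delicate step is part (a): extracting a useful index $i^*$ from the structural discrepancy between $M_{\underline{d}}$ and $M_{hr}$ and choosing the perturbation direction $\widetilde{u}$ so that only the desired entries of $\widetilde{M}_{hr}$ change and the row-reduced property is lost. Once these structural observations are in place the remaining verifications are straightforward rank computations, and part (b) is essentially a repackaging of singular value perturbation theory through Lemma \ref{C} and Corollary \ref{cor2:Mdfull}.
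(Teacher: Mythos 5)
Your part (b) is correct and is exactly the argument the paper has in mind: Corollary \ref{cor2:Mdfull} gives an index $k$ with $T_k(M)$ of full row rank (so necessarily $k\ge k'$ and $T_k$ is not taller than wide, making $\sigma_{km+\sum_i d_i}$ genuinely the smallest singular value), linearity of $P\mapsto T_k(P)$ plus Weyl's inequality and Lemma \ref{C} preserve that rank under the hypothesis \eqref{eq.smoothneigh1}, and Corollary \ref{cor2:Mdfull} converts back. Your identification in part (a) of an index $i^*$ with $R_{i^*,d_{i^*}}=0$ is also correct: minimality gives $\rank(M_{hr})=m$, so $\rank(M_{\underline{d}})<m$ forces $M_{\underline{d}}\ne M_{hr}$, and Lemma \ref{lemm.coeffmatrix}(a) then yields a row of degree strictly less than $d_{i^*}$.

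The genuine gap is in the construction of $\widetilde{u}$ when $m=1$, a case the paper explicitly allows (only $m>0$, $n>0$, $d>0$ are assumed). There the set of rows of $M_{hr}$ indexed by $j\ne i^*$ is empty, $\mathcal{U}=\{0\}$, and no nonzero $\widetilde{u}\in\mathcal{U}$ exists; indeed for $m=1$ a nonzero row is automatically row reduced, so no perturbation can destroy row-reducedness, and your mechanism for losing minimality cannot work. The statement is still true for $m=1$ (e.g., $M(\la)=[1\;\;0]$ with $\underline{d}=(1)$ is approximated by $[1-\epsilon\la\;\;0]$, which vanishes at $\la_0=1/\epsilon$), and the standard fix is a different perturbation that works uniformly in $m$: for $|\la_0|$ large, replace row $i^*$ by $R_{i^*}(\la)-(\la/\la_0)^{d_{i^*}}R_{i^*}(\la_0)$. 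This stays in $\FF[\la]^{m\times(m+n)}_{\underline{d}}$, changes only the coefficient of $\la^{d_{i^*}}$ (which was zero) by $-R_{i^*}(\la_0)/\la_0^{d_{i^*}}$, whose norm is $O(|\la_0|^{\deg R_{i^*}-d_{i^*}})\to 0$ precisely because $\deg R_{i^*}<d_{i^*}$, and it makes $\widetilde{M}(\la_0)$ have a zero row, so $\widetilde{M}$ fails the rank condition of Theorem \ref{minbasis_th}. For $m\ge 2$ your alternative route through the row-reduced condition is valid; you should either restrict it to $m\ge 2$ and treat $m=1$ separately, or adopt the uniform construction above.
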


We now give bounds on the size of the robustness neighborhoods for the full-trimmed-Sylvester-rank property. Theorem \ref{thm.smoothfullSylvrank} is the counterpart of \cite[Theorem 6.6]{VD} and its omitted proof is based on Lemmas \ref{lemm.two-ranks} and \ref{C}.

\begin{theorem} \label{thm.smoothfullSylvrank} Let $M(\la) \in \FF[\la]^{m \times (m+n)}_{\underline{d}}$, where $\underline{d} = (d_1,\ldots , d_m)$, be a polynomial matrix with full-trimmed-Sylvester-rank and let $k'$ and $t$ be defined as in \eqref{eq.ktprime}. Then the following statements hold:
\begin{itemize}
\item[\rm (a)] If $k' > 1$ and $t >0$, then every $\widetilde{M} (\la) \in \FF[\la]^{m \times (m+n)}_{\underline{d}}$ such that
\[
\|T_1 (M)- T_1 (\widetilde{M})\|_2 < \min \left\{
\frac{\sigma_{(k'-1)(m+n)} (T_{k' -1}(M))}{\sqrt{k'-1}} \, , \,
\frac{\sigma_{k'm+\sum_{i=1}^m d_i} (T_{k'}(M))}{\sqrt{k'}} \right\} \,
\]
has full-trimmed-Sylvester-rank.

\item[\rm (b)] If $k' = 1$ or $t = 0$,
then every $\widetilde{M} (\la) \in \FF[\la]^{m \times (m+n)}_{\underline{d}}$ such that
\[
\|T_1 (M)- T_1 (\widetilde{M})\|_2 <
\frac{\sigma_{k'm+\sum_{i=1}^m d_i} (T_{k'}(M))}{\sqrt{k'}} \,
\]
has full-trimmed-Sylvester-rank.
\end{itemize}
\end{theorem}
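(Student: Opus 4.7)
The plan is to combine Lemma \ref{lemm.two-ranks}, which reduces the full-trimmed-Sylvester-rank property to at most two rank conditions on $T_{k'-1}$ and $T_{k'}$, with the standard singular-value perturbation bound: if $A$ has full column (respectively full row) rank, then so does every $A+E$ with $\|E\|_2 < \sigma_{\min}(A)$, where $\sigma_{\min}(A)$ is the smallest singular value of $A$. The only additional ingredient is a device to propagate a bound on $\|T_1(M)-T_1(\widetilde M)\|_2$ to a bound on $\|T_k(M)-T_k(\widetilde M)\|_2$ for larger $k$.

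To carry out this propagation, I would first observe that the assignment $P(\la)\mapsto T_k(P)$ is linear in the coefficients of $P$, so that $T_k(M)-T_k(\widetilde M) = T_k(M-\widetilde M)$. Applying Lemma \ref{C} to $P(\la) := M(\la)-\widetilde M(\la) \in \FF[\la]^{m \times (m+n)}_{\underline{d}}$ then yields
\[
\|T_k(M)-T_k(\widetilde M)\|_2 \;\le\; \sqrt{k}\,\|T_1(M)-T_1(\widetilde M)\|_2 \qquad \text{for every } k\ge 1.
\]
This is the single quantitative tool that links the perturbation, measured on $T_1$, to the size of perturbations at the two levels $k'-1$ and $k'$ where the full-trimmed-Sylvester-rank property is certified.

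For part (a), since $M(\la)$ has full-trimmed-Sylvester-rank and $k'>1$, $t>0$, Lemma \ref{lemm.two-ranks}-(b) guarantees that $T_{k'-1}(M)$ has full column rank with smallest singular value $\sigma_{(k'-1)(m+n)}(T_{k'-1}(M))$, and that $T_{k'}(M)$ has full row rank with smallest singular value $\sigma_{k'm+\sum_{i=1}^m d_i}(T_{k'}(M))$. If $\|T_1(M)-T_1(\widetilde M)\|_2$ is strictly less than the first threshold divided by $\sqrt{k'-1}$, the displayed inequality above (with $k=k'-1$) combined with the singular-value perturbation bound ensures that $T_{k'-1}(\widetilde M)$ still has full column rank. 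An analogous argument with $k=k'$ and the row-rank version of the perturbation bound shows that $T_{k'}(\widetilde M)$ still has full row rank whenever $\|T_1(M)-T_1(\widetilde M)\|_2$ is below the second threshold divided by $\sqrt{k'}$. Taking the minimum of the two thresholds and invoking Lemma \ref{lemm.two-ranks}-(b) once more in the reverse direction yields that $\widetilde M(\la)$ is full-trimmed-Sylvester-rank. Part (b) is the same argument with Lemma \ref{lemm.two-ranks}-(c) replacing (b), so only the row-rank bound on $T_{k'}$ is needed and a single threshold suffices.

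There is no substantive obstacle: the proof is an assembly of two classical ingredients (Weyl-type singular value stability and the norm inequality of Lemma \ref{C}) via the linearity of $T_k$. The only point requiring attention is to keep track of whether Lemma \ref{lemm.two-ranks} requires full \emph{column} or full \emph{row} rank at each level, which fixes the singular-value index to use in each threshold and accounts for the appearance of the two different indices $(k'-1)(m+n)$ and $k'm+\sum_{i=1}^m d_i$ in the statement.
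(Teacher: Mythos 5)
Your proposal is correct and follows exactly the route the paper indicates for its (omitted) proof: reduce the full-trimmed-Sylvester-rank property to the two rank conditions of Lemma \ref{lemm.two-ranks}, transfer the perturbation from $T_1$ to $T_{k'-1}$ and $T_{k'}$ via the linearity of $T_k$ and Lemma \ref{C}, and conclude with the Weyl-type stability of the smallest singular value. No gaps.
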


In the special case that $\sum_{i=1}^m d_i \leq n$, the first trimmed Sylvester matrix $T_1(M)$ is ``flat'' and Theorem \ref{thm.smoothfullSylvrank} can be slightly improved by showing that the estimation of the size of the robustness neighborhood of the full-trimmed-Sylvester-rank property is sharp, i.e., it cannot be extended. This is presented in Corollary \ref{cor.sharprobust}, which is the counterpart in $\FF[\la]^{m \times (m+n)}_{\underline{d}}$ of \cite[Corollary 6.8]{VD} and has a similar proof that is omitted.

\begin{corollary} \label{cor.sharprobust} Let $M(\la) \in \FF[\la]^{m \times (m+n)}_{\underline{d}}$, where $\underline{d} = (d_1,\ldots , d_m)$. If $\sum_{i=1}^m d_i \leq n$, then the following statements hold:
\begin{itemize}
\item[\rm (a)] $M(\la)$ has full-trimmed-Sylvester-rank if and only if $T_{1} (M)$ has full row rank.

\item[\rm (b)] Every $\widetilde{M} (\la) \in \FF[\la]^{m \times (m+n)}_{\underline{d}}$ such that
$
\|T_1 (M)- T_1 (\widetilde{M})\|_2 < \sigma_{\sum_{i=1}^m (d_i+1)} (T_{1}(M)) \,
$
has full-trimmed-Sylvester-rank.

\item[\rm (c)] There exists a polynomial matrix $\widetilde{M} (\la) \in \FF[\la]^{m \times (m+n)}_{\underline{d}}$ that does not have full-trimmed-Sylvester-rank and satisfies
$
\|T_1 (M)- T_1 (\widetilde{M})\|_2 = \sigma_{\sum_{i=1}^m (d_i+1)} (T_{1}(M)) \, .
$
\end{itemize}
\end{corollary}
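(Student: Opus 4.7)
The plan is to reduce all three parts to facts about the first trimmed Sylvester matrix $T_1$, exploiting the observation that the hypothesis $\sum_{i=1}^m d_i \le n$, combined with $d>0$ (hence $\sum d_i \ge 1$), forces $k'=1$ in \eqref{eq.ktprime}. Under this reduction, parts (a) and (b) follow from results already established, while part (c) is the substantive assertion.

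For part (a), since $k'=1$, Lemma~\ref{lemm.two-ranks}-(c) directly states that $M(\la)$ has full-trimmed-Sylvester-rank if and only if $T_{1}(M)$ has full row rank. For part (b), applying Theorem~\ref{thm.smoothfullSylvrank}-(b) with $k'=1$ yields the robustness bound $\sigma_{m+\sum d_i}(T_1(M))/\sqrt{1}$, and since $\sum_{i=1}^m (d_i+1) = m + \sum_{i=1}^m d_i$ this coincides with the stated bound. Both steps are essentially bookkeeping once the value of $k'$ has been identified.

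The substantive part is the sharpness claim (c). The key structural observation is that the linear map $M(\la) \mapsto T_1(M)$ is a bijection from $\FF[\la]^{m\times(m+n)}_{\underline{d}}$ onto $\FF^{(m+\sum d_i)\times(m+n)}$: inspecting Definition~\ref{def.trimSylmatrices} for $k=1$, the rows of $T_1(M)$ are, up to a fixed reordering, precisely the coefficients $R_{i,j}$ with $0 \le j \le d_i$, which are the free parameters of the space (this matches dimensions on both sides). Granting this, I would take an SVD $T_1(M) = U\Sigma V^T$, let $\sigma_r := \sigma_{m + \sum d_i}(T_1(M))$ with corresponding singular vectors $u$ and $v$, and define $\widetilde{M}(\la)$ to be the unique element of the space such that $T_1(\widetilde{M}) = T_1(M) - \sigma_r \, u v^T$. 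Then $\|T_1(M) - T_1(\widetilde{M})\|_2 = \sigma_r$ by construction, while $T_1(\widetilde{M})$ has rank $m + \sum d_i - 1$ and therefore lacks full row rank; by part (a), $\widetilde{M}(\la)$ does not have full-trimmed-Sylvester-rank.

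The main obstacle is verifying the surjectivity of $M \mapsto T_1$ so that the SVD-based rank-reducing perturbation can actually be realized within the polynomial space $\FF[\la]^{m\times(m+n)}_{\underline{d}}$. This is a block-structural check on Definition~\ref{def.trimSylmatrices}, and once in place the remainder of (c) is simply the Eckart--Young fact that the smallest singular value equals the spectral distance from a full-row-rank matrix to the nearest row-rank-deficient one.
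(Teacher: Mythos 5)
Your proposal is correct and matches the argument the authors intend (the paper omits the proof, deferring to the analogous \cite[Corollary 6.8]{VD}): parts (a) and (b) are exactly the specializations of Lemma~\ref{lemm.two-ranks}-(c) and Theorem~\ref{thm.smoothfullSylvrank}-(b) to $k'=1$, and part (c) is the Eckart--Young rank-one perturbation, which is realizable in $\FF[\la]^{m\times(m+n)}_{\underline{d}}$ precisely because the rows of $T_1(M)$ are, up to a fixed permutation, the $\sum_{i=1}^m(d_i+1)$ free coefficient rows $R_{i,j}$ (as the paper itself observes in Example~\ref{ex:1}), so $M\mapsto T_1(M)$ is onto $\FF^{(m+\sum_i d_i)\times(m+n)}$. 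The only cosmetic remark is that Theorem~\ref{thm.smoothfullSylvrank} assumes $M(\la)$ has full-trimmed-Sylvester-rank, but when it does not, part (a) gives $\sigma_{\sum_i(d_i+1)}(T_1(M))=0$ and (b) is vacuous, so nothing is lost.
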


\section{Perturbations of minimal bases dual to full-trimmed-Sylvester-rank matrices} \label{Sec:Dualminbases}

In this section we show that full-trimmed-Sylvester-rank polynomial matrices $M(\la)$ share an important property with the full-Sylvester-rank polynomial matrices described in \cite{VD}. As a consequence of Theorems \ref{thm.smoothfullSylvrank} and \ref{thm.propsfullSylvrank}, the row degrees of the minimal bases dual to full-trimmed-Sylvester-rank matrices remain constant (up to permutations) in a robustness neighborhood of $M(\la)$, and their values are given in \eqref{eq.statfullSylrank}. This allows us to show that one can always choose a basis for the perturbed dual space that varies smoothly with the perturbations of $M(\la)$, as long as the perturbations $M(\la) + \Delta M(\la)$ are restricted to stay in $\FF[\la]^{m \times (m+n)}_{\underline{d}}$ and one chooses correctly the degrees of freedom of the perturbed dual basis. We refer to \cite[Section 7]{VD} for a more elaborate discussion of these ideas in the context of $\FF[\la]^{m \times (m+n)}_{d}$ and we limit ourselves here to state Theorem \ref{thm.perturbdualbasis}. The proof of Theorem \ref{thm.perturbdualbasis} is similar to that of \cite[Theorem 7.1]{VD} except by some differences that we emphasize in the proof sketched below.

\begin{theorem} \label{thm.perturbdualbasis} Let $M(\la) \in \FF[\la]^{m \times (m+n)}_{\underline{d}}$, where $\underline{d} = (d_1,\ldots , d_m)$, be a polynomial matrix with full-trimmed-Sylvester-rank, let $k'$ and $t$ be defined as in \eqref{eq.ktprime}, and let $N(\la) \in \FF[\la]^{n \times (m+n)}_{k'}$ be a minimal basis dual to $M(\la)$ with highest-row-degree coefficient matrix $N_{hr}\in \FF^{n \times (m+n)}$. Moreover, let us define the quantities $\theta_1 (M)$ and $\theta_2 (M)$ as follows:
\begin{itemize}
\item[\rm (a)] If $k' >1$ and $t>0$
\begin{align*}
\theta_1 (M) & := \min \left\{
\frac{\sigma_{(k'-1)(m+n)} (T_{k' -1}(M))}{\sqrt{k'-1}} \, , \,
\frac{\sigma_{(k'm+\sum_{i=1}^m d_i)} (T_{k'}(M))}{\sqrt{k'}} \, , \,
\frac{\sigma_{(k'm+m+\sum_{i=1}^m d_i)} (T_{k'+1}(M))}{\sqrt{k'+1}}
\right\} \, , \\
\theta_2 (M) & := \min \left\{
\frac{\sigma_{(k'm+\sum_{i=1}^m d_i)} (T_{k'}(M))}{\sqrt{k'}} \, , \,
\frac{\sigma_{(k'm+m+\sum_{i=1}^m d_i)} (T_{k'+1}(M))}{\sqrt{k'+1}}
\right\} \, ;
\end{align*}

\item[\rm (b)] If $k' = 1$ and $t >0$,
\begin{align*}
\theta_1 (M) & = \theta_2 (M) := \min \left\{
\sigma_{(m+\sum_{i=1}^m d_i)} (T_{1}(M)) \, , \,
\frac{\sigma_{(2m+\sum_{i=1}^m d_i)} (T_{2}(M))}{\sqrt{2}}
\right\} \, ;
\end{align*}

\item[\rm (c)] If $t =0$
\begin{align*}
\theta_1 (M) & := \min \left\{
\frac{\sigma_{(k'm+\sum_{i=1}^m d_i)} (T_{k'}(M))}{\sqrt{k'}} \, , \,
\frac{\sigma_{(k'm+m+\sum_{i=1}^m d_i)} (T_{k'+1}(M))}{\sqrt{k'+1}}
\right\} , \\
\theta_2 (M) & :=
\frac{\sigma_{(k'm+m+\sum_{i=1}^m d_i)} (T_{k'+1}(M))}{\sqrt{k'+1}}
\, .
\end{align*}
\end{itemize}
Then, every $\widetilde{M} (\la) \in \FF[\la]^{m \times (m+n)}_{\underline{d}}$ such that
\begin{equation} \label{eq.dualpert1}
\|T_1 (M)- T_1 (\widetilde{M})\|_2 < \frac{1}{2} \cdot \theta_1 (M) \cdot  \frac{\sigma_n (N_{hr})}{\|S_1 (N)\|_F} \,
\end{equation}
has full-trimmed-Sylvester-rank and has a dual minimal basis $\widetilde{N} (\la) \in \FF[\la]^{n \times (m+n)}_{k'}$ that satisfies
\begin{equation} \label{eq.dualpert2}
\frac{\|S_1 (N) - S_1 (\widetilde{N})\|_F}{\|S_1 (N)\|_F} \leq \frac{2}{\theta_2 (M)} \cdot \|T_1 (M)- T_1 (\widetilde{M})\|_2 \, .
\end{equation}
In addition, if $t=0$, then all the row degrees of $\widetilde{N}(\la)$ and $N(\la)$ are equal to $k'$.
\end{theorem}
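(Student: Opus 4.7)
My plan is to adapt the proof of \cite[Theorem 7.1]{VD} to the trimmed setting, with Sylvester matrices replaced throughout by trimmed Sylvester matrices and \cite[Theorem 6.6]{VD} replaced by Theorem \ref{thm.smoothfullSylvrank}. The first task is to verify that $\widetilde M(\la)$ still has full-trimmed-Sylvester-rank. Since $N_{hr}$ is, up to selecting rows, a submatrix of $S_1(N)$, we have $\sigma_n(N_{hr})\le \|N_{hr}\|_F\le \|S_1(N)\|_F$, so the bound \eqref{eq.dualpert1} forces $\|T_1(M)-T_1(\widetilde M)\|_2 < \theta_1(M)/2 < \theta_1(M)$, which in each of the cases (a)--(c) is at most the bound appearing in Theorem \ref{thm.smoothfullSylvrank}. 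That theorem then gives full-trimmed-Sylvester-rank for $\widetilde M(\la)$, and Theorem \ref{thm.propsfullSylvrank}-(a) supplies the claimed pattern of right minimal indices, so a dual minimal basis $\widetilde N(\la)\in \FF[\la]^{n\times(m+n)}_{k'}$ of the same row-degree pattern as $N(\la)$ exists; when $t=0$ all its row degrees equal $k'$.

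Next I will encode duality at the level of coefficients. Stacking the coefficients of each row of $N(\la)$ (padding with zeros in the top slot for rows of degree $k'-1$) into a column of length $(k'+1)(m+n)$ produces a matrix $C(N)\in \FF^{(k'+1)(m+n)\times n}$ satisfying $T_{k'+1}(M)\,C(N)=0$, with $\|C(N)\|_F = \|S_1(N)\|_F$ since the two matrices carry the same entries up to rearrangement. The right nullity of $T_{k'+1}(M)$ equals $(k'+1)(m+n)-(k'+1)m-\sum_i d_i = n+t$, and is spanned by the columns of $C(N)$ together with the $t$ shifts by $\lambda$ coming from the rows of $N(\la)$ of degree $k'-1$. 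This $t$-dimensional redundancy prevents a canonical definition of $\widetilde N$ from $\widetilde M$ alone, so I will pin down $\widetilde N$ by requiring that, after matching row ordering to the degree pattern, the leading coefficient in each row of $\widetilde N$ equals the corresponding row of $N_{hr}$. Because $\rank(N_{hr})=n$, this constraint is nondegenerate and singles out a unique $\widetilde N$.

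Linearizing $T_{k'+1}(\widetilde M)\,C(\widetilde N)=0$ together with the normalization around $(M,N)$ yields the equation $T_{k'+1}(M)\,\Delta C = -\Delta T\, C(N)$, where $\Delta C := C(\widetilde N)-C(N)$ lies in a specific $n$-dimensional complement of the $(n+t)$-dimensional null space and $\Delta T := T_{k'+1}(\widetilde M)-T_{k'+1}(M)$. On that complement the smallest singular value of $T_{k'+1}(M)$ is at least $\sqrt{k'+1}\,\theta_2(M)$ by the definition of $\theta_2$ in each of the three cases, yielding $\|\Delta C\|_F \le \|\Delta T\|_2\,\|C(N)\|_F/[\sqrt{k'+1}\,\theta_2(M)]$; Lemma \ref{C} cancels the $\sqrt{k'+1}$ via $\|\Delta T\|_2 \le \sqrt{k'+1}\,\|T_1(M)-T_1(\widetilde M)\|_2$, and the identifications $\|C(N)\|_F=\|S_1(N)\|_F$ and $\|\Delta C\|_F=\|S_1(N)-S_1(\widetilde N)\|_F$ produce \eqref{eq.dualpert2}, the extra factor of $2$ absorbing the higher-order (Neumann-series) error in the linearization. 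The main obstacle will be to rigorously justify that the normalization constraint remains nondegenerate throughout the neighborhood, which is precisely the role of the quotient $\sigma_n(N_{hr})/\|S_1(N)\|_F$ in \eqref{eq.dualpert1}: it keeps the perturbed leading-coefficient matrix $\widetilde N_{hr}$ close enough to $N_{hr}$ to preserve full row rank, while the factor $1/2$ ensures convergence of the implicit-function argument on which \eqref{eq.dualpert2} rests.
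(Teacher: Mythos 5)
Your first step (deducing that $\widetilde M(\la)$ retains full-trimmed-Sylvester-rank from $\sigma_n(N_{hr})\le\|S_1(N)\|_F$ and Theorem \ref{thm.smoothfullSylvrank}) is correct and agrees with the paper. The gap is in how you construct $\widetilde N(\la)$. Your normalization --- requiring each row of $\widetilde N$ to have leading coefficient exactly equal to the corresponding row of $N_{hr}$ --- is in general infeasible. Let $\widetilde{\cal W}$ be the $(n+t)$-dimensional space of vectors of degree at most $k'$ in ${\cal N}_r(\widetilde M)$. The coefficient-of-$\la^{k'}$ map on $\widetilde{\cal W}$ has kernel equal to the $t$-dimensional space of degree-$\le k'-1$ null vectors, so its image is a \emph{proper} $n$-dimensional subspace of $\FF^{m+n}$ that moves with $\widetilde M$; a generic perturbation takes it off the corresponding subspace for $M$, so the prescribed rows of $N_{hr}$ need not be attainable as leading coefficients of null vectors of $\widetilde M$ (the same objection applies to the $\la^{k'-1}$ coefficients of the degree-$(k'-1)$ rows). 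Moreover, even where feasible, the constraint does not single out $\widetilde N$ when $t>0$: adding a degree-$(k'-1)$ null vector of $\widetilde M$ to a degree-$k'$ row leaves its leading coefficient unchanged. Consequently your $\Delta C$ is not pinned to a complement on which the restricted smallest singular value of $T_{k'+1}(M)$ can be identified with $\sigma_{(k'+1)m+\sum d_i}(T_{k'+1}(M))$, and the asserted bound does not follow.

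The paper (following \cite[Theorem 7.1]{VD}) avoids any normalization: it splits $N$ into $X(\la)$ (the $t$ rows of degree $k'-1$) and $Y(\la)$ (the $n-t$ rows of degree $k'$) and solves the \emph{exact} equations $T_{k'}(\widetilde M)\,S_1(\Delta X^T)=-T_{k'}(\Delta M)\,S_1(X^T)$ and $T_{k'+1}(\widetilde M)\,S_1(\Delta Y^T)=-T_{k'+1}(\Delta M)\,S_1(Y^T)$, which are consistent because $T_{k'}(\widetilde M)$ and $T_{k'+1}(\widetilde M)$ have full row rank; the non-uniqueness is resolved by taking the minimum-Frobenius-norm solutions. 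This two-equation structure is precisely why $\theta_2(M)$ contains \emph{both} $\sigma_{(k'm+\sum d_i)}(T_{k'})/\sqrt{k'}$ and $\sigma_{(k'm+m+\sum d_i)}(T_{k'+1})/\sqrt{k'+1}$ when $t>0$, and only the latter when $t=0$ --- a feature your single-equation setup with $T_{k'+1}$ alone cannot reproduce. Finally, the factor $2$ is not absorbing a Neumann-series remainder (the equation $\widetilde M\,\Delta N^T=-\Delta M\,N^T$ is exactly linear in $\Delta N$, with no higher-order term); it comes from the lower bound $\sigma_{\min}(T_k(\widetilde M))\ge\sigma_{\min}(T_k(M))-\sqrt{k}\,\|T_1(\Delta M)\|_2\ge\tfrac12\,\sigma_{\min}(T_k(M))$, while the quotient $\sigma_n(N_{hr})/\|S_1(N)\|_F$ in \eqref{eq.dualpert1} guarantees, as you correctly surmised, that $\widetilde N_{hr}$ keeps full row rank so that $\widetilde N$ is indeed a minimal basis dual to $\widetilde M$.
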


\begin{proof}
As said before, we only emphasize some differences with the proof of \cite[Theorem 7.1]{VD}. We invite the reader to follow the proof of \cite[Theorem 7.1]{VD} for the case $k' > 1$ and $t >0$ (the other cases are simpler), using the same notation, until the equations (7.6) and (7.7) in \cite{VD}, which are
\begin{align} \label{eq.3dualpartdegree}
S_{k'} (\widetilde{M}) \, S_1 (\Delta X^T) & = - S_{k'}(\Delta M) \,  S_1(X^T)\quad \mbox{and} \\ \label{eq.4dualpartdegree}
S_{k' +1} (\widetilde{M}) \, S_1 (\Delta Y^T) & = - S_{k'+1} (\Delta M) \, S_1(Y^T) \, ,
\end{align}
where the Sylvester matrices are defined assuming that $\widetilde{M}(\la)$ and $\Delta M (\la)$ have degree at most $d = \max_{1 \leq i \leq m} d_i$, $X (\la)$ and $\Delta X (\la)$ have degree at most $k' - 1$, and $Y (\la)$ and $\Delta Y (\la)$ have degree at most $k'$. The key point is that $S_{k'} (\widetilde{M})$ and $S_{k'}(\Delta M)$ (respectively, $S_{k' +1} (\widetilde{M})$ and $S_{k' +1}(\Delta M)$) have both some zero rows in the same positions that one can remove and obtain the trimmed Sylvester matrices $T_{k'} (\widetilde{M})$ and $T_{k'}(\Delta M)$ (respectively, $T_{k' +1} (\widetilde{M})$ and $T_{k' +1}(\Delta M)$). Therefore, \eqref{eq.3dualpartdegree}-\eqref{eq.4dualpartdegree} are equivalent to the following equations for the unknown polynomial matrices $\Delta X (\la)$ and $\Delta Y (\la)$
\begin{align} \label{eq.5dualpartdegree}
T_{k'} (\widetilde{M}) \, S_1 (\Delta X^T) & = - T_{k'}(\Delta M) \,  S_1(X^T)\quad \mbox{and} \\ \label{eq.6dualpartdegree}
T_{k' +1} (\widetilde{M}) \, S_1 (\Delta Y^T) & = - T_{k'+1} (\Delta M) \, S_1(Y^T) \, ,
\end{align}
which are consistent because $T_{k'} (\widetilde{M})$ and $T_{k'+1} (\widetilde{M})$ have both full row rank. From here, the proof is completely analogous to that of \cite[Theorem 7.1]{VD} and consists of bounding the minimum Frobenius norm solutions of \eqref{eq.5dualpartdegree} and \eqref{eq.6dualpartdegree}.
\end{proof}

\begin{remark} {\rm Note that, according to Theorem \ref{thm.propsfullSylvrank}-(a), the minimal bases $N(\la)$ and $\widetilde{N}(\la)$ dual to, respectively, $M(\la)$ and $\widetilde{M}(\la)$ appearing in Theorem \ref{thm.perturbdualbasis} have both $t$ row degrees equal to $k'-1$ and $n-t$ equal to $k'$. Therefore, if $t\ne 0$, we can order adequately the rows of $N(\la)$ and $\widetilde{N}(\la)$ and consider, without loss of generality, that $N(\la), \widetilde{N} (\la) \in \FF[\la]^{n \times (m+n)}_{\underline{k'}}$, where $$\underline{k'} = (\underbrace{k'-1, \ldots , k' -1}_{t}, \underbrace{k' , \ldots k'}_{n-t}) \,.$$
Then, we can use the trimmed Sylvester matrices of $N(\la), \widetilde{N} (\la) \in \FF[\la]^{n \times (m+n)}_{\underline{k'}}$ to express the results in Theorem \ref{thm.perturbdualbasis}, since the corresponding spectral and Frobenius norms are equal to those of the Sylvester matrices. More precisely, \eqref{eq.dualpert1} and \eqref{eq.dualpert2} can be written as
\[
\|T_1 (M)- T_1 (\widetilde{M})\|_2 < \frac{1}{2} \cdot \theta_1 (M) \cdot  \frac{\sigma_n (N_{hr})}{\|T_1 (N)\|_F} \,
\]
and
\[
\frac{\|T_1 (N) - T_1 (\widetilde{N})\|_F}{\|T_1 (N)\|_F} \leq \frac{2}{\theta_2 (M)} \cdot \|T_1 (M)- T_1 (\widetilde{M})\|_2 \, .
\]
However, we emphasize that, in general, $T_1 (M)$ and $T_1 (\widetilde{M})$ have different structures than $T_1 (N)$ and $T_1 (\widetilde{N})$, which might make the previous equations somewhat confusing.
}
\end{remark}

\section{On the classical rank conditions for robust minimal bases} \label{Sec:Classrevisited}

This section considers those minimal bases in $\CC[\la]^{m \times (m+n)}_{\underline{d}}$ that are robust under perturbations, which are those with full row rank leading row-wise coefficient matrix, according to Theorem \ref{thm.smoothminbases}. For these minimal bases, we prove that the infinitely many constant matrices whose ranks are involved in the classical Theorem \ref{minbasis_th} have minimum singular values bounded below by a common number determined by one of the trimmed Sylvester matrices of the considered minimal basis. This result generalizes to $\CC[\la]^{m \times (m+n)}_{\underline{d}}$ the result proved in \cite[Theorem 8.1]{VD} for polynomial matrices of degree at most $d$. In contrast with the results included in Sections \ref{Sec:FullRank}, \ref{Sec:Smoothness}, and \ref{Sec:Dualminbases}, the proof of Theorem \ref{thm.classrevisit} is more involved than the one of \cite[Theorem 8.1]{VD}, and, therefore, is fully included below. It is important to recall in the statement of Theorem \ref{thm.classrevisit} that, in order to avoid trivialities, we are assuming since Section \ref{Sec:Preliminaries} that $\max_{1 \leq i \leq m} d_i > 0$, which implies that $d' >0$ as a consequence of Theorem \ref{thm:dualbasis}.
\begin{theorem} \label{thm.classrevisit} Let $M(\la) \in \CC[\la]^{m \times (m+n)}_{\underline{d}}$ be a minimal basis with $\rank (M_{\underline{d}}) =m$, where $M_{\underline{d}}$ is the leading row-wise coefficient matrix of $M(\la)$ introduced in Definition \ref{def.Mbard} and $\underline{d} = (d_1 , \ldots , d_m)$. Let $d'$ be the largest right minimal index of $M(\la)$ and $T_{d'}$ be its $d'$th trimmed Sylvester matrix. Then
\[
\sigma_{(d'm+\sum_{i=1}^m d_i)}(T_{d'}) \leq  \inf_{\la_0\in \CC} \sigma_m (M(\la_0)) \quad \mbox{\rm and} \quad \sigma_{(d'm+\sum_{i=1}^m d_i)}(T_{d'}) \leq \sigma_m (M_{\underline{d}}) \,.
\]

\end{theorem}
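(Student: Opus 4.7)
Since $M(\la)$ is a minimal basis with $\rank(M_{\underline{d}}) = m$, Corollary \ref{cor2:Mdfull} guarantees that $T_{d'}$ has full row rank, so its smallest singular value $\sigma_{(d'm+\sum_{i=1}^m d_i)}(T_{d'})$ is strictly positive. I plan to prove the two inequalities separately, using two different left-multiplication tricks on $T_{d'}$.

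For the first inequality, I will adapt the classical Vandermonde-style identity for Sylvester matrices to the trimmed setting. Fix $\la_0\in\CC$ and $v\in\CC^m$ with entries $v_1,\ldots,v_m$, and index the rows of $T_{d'}$ by pairs $(j,i)$ with $i\in\{1,\ldots,m\}$ and $j\in\{0,1,\ldots,d'+d_i-1\}$, in accordance with Definition \ref{def.trimSylmatrices}. Let $u=u_{v,\la_0}$ be the row vector whose $(j,i)$-th entry is $\la_0^j v_i$. The standard Sylvester identity applied to $S_{d'}(M)$ with left multiplier $[v^T,\la_0 v^T,\ldots,\la_0^{d'+d-1}v^T]$ gives
\[
u\,T_{d'} \;=\; [\,v^T M(\la_0),\,\la_0 v^T M(\la_0),\,\ldots,\,\la_0^{d'-1} v^T M(\la_0)\,],
\]
because the rows removed from $S_{d'}$ to form $T_{d'}$ are precisely those indexed by $(j,i)$ with $j\geq d'+d_i$, and these match exactly the entries removed from the Vandermonde multiplier to produce $u$. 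Computing both sides in Euclidean norm and applying $\|u\,T_{d'}\|_2^2\geq \sigma_{(d'm+\sum_{i=1}^m d_i)}^2(T_{d'})\,\|u\|_2^2$ (valid because $T_{d'}$ has full row rank) will yield
\[
\|v^T M(\la_0)\|_2^2\sum_{j=0}^{d'-1}|\la_0|^{2j} \;\geq\; \sigma_{(d'm+\sum_{i=1}^m d_i)}^2(T_{d'})\sum_{i=1}^{m}|v_i|^2\sum_{j=0}^{d'+d_i-1}|\la_0|^{2j}.
\]
Since $d_i\geq 0$ forces $\sum_{j=0}^{d'+d_i-1}|\la_0|^{2j}\geq \sum_{j=0}^{d'-1}|\la_0|^{2j}$ for every $i$, dividing through by the strictly positive quantity $\sum_{j=0}^{d'-1}|\la_0|^{2j}$ reduces the inequality to $\|v^T M(\la_0)\|_2\geq \sigma_{(d'm+\sum_{i=1}^m d_i)}(T_{d'})\|v\|_2$; optimizing over $v$ gives $\sigma_m(M(\la_0))\geq \sigma_{(d'm+\sum_{i=1}^m d_i)}(T_{d'})$, and taking the infimum over $\la_0\in\CC$ produces the first inequality.

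For the second inequality, I will exploit the nesting property of Lemma \ref{lemm.nesttrim}: there exists a permutation $P_{d'}$ such that the last $m$ rows of $P_{d'}T_{d'}$ coincide (up to zero padding on the left if $d'\geq 2$) with $M_{\underline{d}}$. Given $v\in\CC^m$, left-multiplying $P_{d'}T_{d'}$ by the row vector $w:=[0,\ldots,0,v^T]\in\CC^{d'm+\sum_{i=1}^m d_i}$ having its only nonzero entries in the last $m$ positions will yield $\|w\,P_{d'}T_{d'}\|_2 = \|v^T M_{\underline{d}}\|_2$ together with $\|w\|_2 = \|v\|_2$. Since a row permutation does not affect singular values, the bound $\|w\,P_{d'}T_{d'}\|_2 \geq \sigma_{(d'm+\sum_{i=1}^m d_i)}(T_{d'})\|w\|_2$ immediately delivers $\|v^T M_{\underline{d}}\|_2\geq \sigma_{(d'm+\sum_{i=1}^m d_i)}(T_{d'})\|v\|_2$ for every $v$, and optimizing over $v$ gives $\sigma_m(M_{\underline{d}})\geq \sigma_{(d'm+\sum_{i=1}^m d_i)}(T_{d'})$.

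The main obstacle is the careful bookkeeping of row indices in the first step: one must verify that the rows trimmed from $S_{d'}$ correspond exactly to the entries $(j,i)$ of the Vandermonde multiplier with $j\geq d'+d_i$, so that the product identity survives the trimming and the left multiplier has the correct componentwise norm. Once that alignment is settled, the geometric-sum comparison is a one-line observation, and the second inequality reduces to a single application of Lemma \ref{lemm.nesttrim}.
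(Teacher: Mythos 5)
Your proposal is correct and follows essentially the same route as the paper: full row rank of $T_{d'}$ from Corollary \ref{cor2:Mdfull}, the Vandermonde-type left-multiplication identity (the paper's equations \eqref{conv}--\eqref{eq.semilast}) combined with the geometric-sum comparison $\sum_{j=0}^{d'+d_i-1}|\la_0|^{2j}\geq\sum_{j=0}^{d'-1}|\la_0|^{2j}$ for the first inequality, and the $[0\;\,M_{\underline{d}}]$ submatrix from Lemma \ref{lemm.nesttrim} for the second. The only cosmetic difference is that you optimize over an arbitrary $v$ at the end while the paper plugs in the singular vectors of $M(\la_0)$ directly.
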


\begin{proof}
From Corollary \ref{cor2:Mdfull}, we obtain that $T_{d'}$ has full row rank. Therefore, its smallest singular value is larger than zero, i.e., $\sigma_{(d'm+\sum_{i=1}^m d_i)}(T_{d'}) >0$. We use in this proof the well known fact \cite{HoJo, stewart-sun} that any matrix $A\in \CC^{p \times q}$ with $p \leq q$ satisfies
\begin{equation} \label{eq.minsingvalgen}
\sigma_p (A) = \min_{0 \ne x \in \CC^{p}} \frac{\|A^* x \|_2}{\|x\|_2} =
\min_{0 \ne x \in \CC^{p}} \frac{\|x^* A \|_2}{\|x\|_2} \, .
\end{equation}
This result applied to $A = T_{d'}$, together with Lemma \ref{lemm.nesttrim}, implies that $\sigma_{(d'm+\sum_{i=1}^m d_i)}(T_{d'}) \leq \sigma_m (M_{\underline{d}})$, since the block row $[0 \; M_{\underline{d}}]$ is a submatrix of $T_{d'}$ and one can choose in \eqref{eq.minsingvalgen} vectors $x$ with entries not corresponding to this submatrix equal to zero.
To prove the first inequality in Theorem \ref{thm.classrevisit}, we assume that $M(\la)$ is described through its rows as in \eqref{eq.Fdbar}. Then, note that the $i$th row $R_i(\la)\in \FF[\la]^{1\times (m+n)}$ of $M(\la)$ satisfies the following equality between polynomial matrices:
\begin{equation} \label{conv}\Pi_{d_i+d'}(\la) \, T_{d'}(R_i) =  R_i(\la) \, [ \Pi_{d'}(\la)\otimes I_{m+n}] \, ,
\end{equation}
where
$$ \Pi_k(\la) := \left[ \begin{array}{ccccc} 1 & \la & \la^2 & \ldots & \la^{k-1} \end{array}\right]
$$
and
$$  T_{d'}(R_i)=  \underbrace{\left[ \begin{array}{ccccc} R_{i,0} \\ R_{i,1} & R_{i,0} \\
   \vdots & R_{i,1} &  \ddots \\
   R_{i,d_i} & \vdots & \ddots & R_{i,0} \\
   0 & R_{i,d_i} & & R_{i,1}\\
   \vdots & \ddots & \ddots & \vdots \\
   0 & \ldots & 0 & R_{i,d_i}
   \end{array}\right]}_{d' \; \mathrm{blocks}}  \in \CC^{(d_i + d') \times d' (m+n)} \, .
$$
Theorem \ref{minbasis_th} implies that $\sigma_{0} := \sigma_m (M(\la_0)) >0$ for any $\la_0 \in \CC$. Let $u_0\in \CC^m$ and $v_0\in \CC^{(m+n)}$ be left and right singular vectors of $M(\la_0)$ corresponding to $\sigma_0$, that is $\|u_0\|_2=\|v_0\|_2=1$ and  $u_0^* \, M(\la_0)=\sigma_0 \, v_0^*$. Then it follows from \eqref{conv} that
\begin{equation} \label{eq.semilast}
u_0^*\left[ \begin{array}{cccc} \Pi_{d_1+d'}(\la_0) \\ & \ddots &\\  & & \Pi_{d_m+d'}(\la_0)  \end{array}\right]
  \left[ \begin{array}{c} T_{d'}(R_1) \\ \vdots \\ T_{d'}(R_m) \end{array}
\right] =
   \sigma_0(\left[ \begin{array}{ccccc} 1 & \la_0 & \la^2_0 & \ldots & \la^{d'-1}_0 \end{array}\right] \otimes v_0^*).
\end{equation}
Notice that the block arrangement with the matrices $T_{d'}(R_i)$ is nothing but a row permutation of $T_{d'}(M)$, and that the vector multiplying it on the left has 2-norm larger than or equal to  $\|\Pi_{d_s+d'}(\la_0)\|_2=\sqrt{\sum_{i=1}^{d_s+d'}|\la_0|^{2(i-1)}}$, where $d_s = \min_{1 \leq i \leq m} d_i$.
From \eqref{eq.semilast} and \eqref{eq.minsingvalgen} applied to the row permutation of $T_{d'} (M)$, we get
$$
\sigma_{(d'm+\sum_{i=1}^m d_i)}(T_{d'}) \le \sigma_0
\sqrt{\frac{\sum_{i=1}^{d'} |\la_0|^{2(i-1)} }{\sum_{i=1}^{d_s+d'}|\la_0|^{2(i-1)}}}\le \sigma_0 = \sigma_m (M(\la_0)).
$$
Since this holds for all $\la_0 \in \CC$, the result is proved.
\end{proof}

\section{Conclusions} \label{Sec:Conclusions}
In this paper we have extended the results previously obtained in \cite{VD} for the set of polynomial matrices with degree at most $d$, i.e., the set $\FF[\la]^{m\times (m+n)}_d$, to the set of polynomial matrices whose row degrees are at most $d_1, d_2, \ldots, d_m$, i.e., the set $\FF[\la]^{m\times (m+n)}_{\underline{d}}$, where $\underline{d} = (d_1, d_2, \ldots, d_m )$. In \cite{VD} we proved, among many other results, that generically the polynomial matrices in $\FF[\la]^{m\times (m+n)}_d$ are minimal bases with all its row degrees equal to $d$, i.e., with homogeneous row degrees, and with ``almost homogeneous'' right minimal indices, i.e., right minimal indices differing at most by one, determined by the constraint that their sum is equal to $md$. Analogously, we have shown in this paper that generically the polynomial matrices in $\FF[\la]^{m\times (m+n)}_{\underline{d}}$ are also minimal bases, in this case with their row degrees equal to $d_1, d_2, \ldots , d_m$, and again with ``almost homogeneous'' right minimal indices, which are determined now by the constraint that their sum is equal to $\sum_{i=1}^{m} d_i$. Thus, we have proved that the ``almost homogeneity'' of the right minimal indices is a general phenomenon that is independent of the values of the row degrees $d_1, d_2, \ldots , d_m$, which can be arbitrarily different, or, in other words arbitrarily inhomogeneous. Many other properties have been also extended from $\FF[\la]^{m\times (m+n)}_d$ to $\FF[\la]^{m\times (m+n)}_{\underline{d}}$ just by introducing minor changes to formulas, theorems, and proofs coming mainly from replacing the notion of Sylvester matrices by the new notion of trimmed Sylvester matrices. This allowed us to broaden the class of full-Sylvester-rank matrices to that of full-trimmed-Sylvester-rank matrices as a class of polynomial
matrices that are robust minimal bases and have a dual minimal basis with similar robustness properties. One important property that is {\em not preserved}
for this extended set is that its reversed polynomial matrix is not necessarily a minimal basis anymore. This last property is important when dealing with
so-called {\em strong} linearizations or $\ell$-ifications of polynomial matrices, but we expect that the extended set will play an important role for
problems where strongness is not an issue. Moreover, we emphasize that we are currently using some of the results in this paper for describing the sets of polynomial matrices with bounded rank and degree from a different perspective that the one recently introduced in \cite{dmy-dop-2017}, which will be more convenient in certain applications.

\bibliographystyle{plain}

\end{document}